 \newtheorem{thm}{Theorem}[section]
 \newtheorem{cor}[thm]{Corollary}
 \newtheorem{lem}[thm]{Lemma}
 \newtheorem{prop}[thm]{Proposition}
 \theoremstyle{definition}
 \newtheorem{defn}[thm]{Definition}
 \theoremstyle{remark}
 \numberwithin{equation}{section}
\newcommand {\B}    {\mathbb{B}}
\newcommand {\C}    {\mathbb{C}}
\newcommand {\R}    {\mathbb{R}}
\newcommand {\Z}    {\mathbb{Z}}
\newcommand {\T}    {\mathbb{T}}
\newcommand {\Aa}    {\mathcal{A}}
\newcommand {\Fa}    {\mathcal{F}}
\newcommand {\Ima}    {\hbox{\rm Im}\,}
\newcommand {\im}     {\hbox{\rm Im}\,}
\newcommand{\SU}{\mathrm{SU}}
\newcommand{\su}{\mathfrak{su}}
\newcommand{\str}{\mathrm{str}}
\newcommand{\uni}{\mathfrak{u}}
\newcommand{\tr}{\mathrm{tr}}
\newcommand{\h}{\mathfrak{h}}
\begin{document}

\title[Toeplitz operators on the super unit ball]{Commutative $C^*$-algebras generated by Toeplitz operators on the super unit ball}
\author{R.~Quiroga-Barranco}
\address{Centro de Investigaci\'on en Matem\'aticas \\
Guanajuato \\
Mexico}
\email{quiroga@cimat.mx}
\author{A.~S\'anchez-Nungaray}
\address{Facultad de Matem\'aticas \\
Universidad Veracruzana \\
Veracruz \\
Mexico}
\email{armsanchez@uv.mx}

\begin{abstract}
    We extend known results about commutative $C^*$-algebras generated Toeplitz operators over the unit ball to the supermanifold setup. This is obtained by constructing commutative $C^*$-algebras of super Toeplitz operators over the super ball $\mathbb{B}^{p|q}$ and the super Siegel domain $\mathbb{U}^{p|q}$ that naturally generalize the previous results for the unit ball and the Siegel domain. In particular, we obtain one such commutative $C^*$-algebra for each even maximal Abelian subgroup of automorphisms of the super ball.
\end{abstract}

\subjclass{Primary 47B35; Secondary 58C50, 32M15}
\keywords{Toeplitz operator, super unit ball, commutative $C^*$-algebras}
\thanks{The authors were supported by SNI and  Conacyt grants.}

\maketitle

\section{Introduction}

In \cite{GQV} it was proved, under mild conditions, that a $C^*$-algebra generated by Toeplitz operators is commutative on each weighted Bergman space of the unit disk if and only if there is a pencil of hyperbolic geodesics of the unit disk such that the symbols of the Toeplitz operators are constant on the cycles of this pencil. In fact, the cycles are the orbits of a one-parameter subgroup of isometries for the hyperbolic geometry on the unit disk. We note that there are three different non-conjugate model classes of such subgroups: elliptic, parabolic and hyperbolic. This provides us with the following scheme: the $C^*$-algebra generated by Toeplitz operators is commutative on each weighted Bergman space on the unit disk if and only if there is a maximal Abelian subgroup of Mobius transformations such that the symbols of the Toeplitz operators are invariant under the action of this subgroup.

A generalization of this scheme was given in \cite{QV1,QV2,QV3}. The generalization is obtained by considering a maximal Abelian subgroup of biholomorphisms of the unit ball, then the $C^*$-algebra generated by the Toeplitz operators whose symbols are invariant under the action of such subgroup is commutative on each weighted Bergman space. It was also noted that there are five different non-conjugate model classes of such subgroups: quasi-elliptic, quasi-parabolic, quasi-hyperbolic, nilpotent, and quasi-nilpotent. We refer to the above mentioned works for further details.

On the other hand, the general theory of non-perturbative quantization for a class of Hermitian symmetric supermanifolds (a particular case is the super ball) was developed in  \cite{BKLR1,BKLR2}. Such quantization is based on the notion of super Toeplitz operator defined on a suitable $\mathbb{Z}_2$-graded Hilbert space of superholomorphic functions that effectively defines super Bergman spaces. These quantized supermanifolds yield the $C^*$-algebra generated by such super Teoplitz operators. Along these lines, in \cite{LU} it is given an exhaustive description of the super Toeplitz operators over the super ball using classical Toeplitz-type operators.

Recently and quite unexpectedly it was observed in \cite{L-SN1,L-SN2,SN1} that there are five different non-conjugate classes of maximal abelian supergroups of isomorphisms of the super disk labeled by the names super-elliptic, quasi-elliptic, super-parabolic, quasi-parabolic and quasi-hyperbolic. In these works it is proved that the $C^*$-algebra of super Toeplitz operators whose symbols are invariant under the action of one of these subgroups is commutative on each weighted super Bergman space.

The main goal of this work is to extend the previous results and theory to the case of the super unit ball $\mathbb{B}^{p|q}$ and its unbounded realization $\mathbb{U}^{p|q}$, the super Siegel domain.

Hence, we introduce in Section~\ref{sec:superBergman} the super Bergman space for the super Siegel domain and prove its unitary equivalence with the super Bergman space of the super ball. This allows us to use the known theory to compute the super Bergman projection and define the super Toeplitz operators on the super Siegel domain.

With the above setup, we obtain the full list of even maximal Abelian subgroups of the group of automorphisms of the super unit ball. Our classification is based on the analysis of the corresponding maximal Abelian subalgebras that are described in Theorem~\ref{thm:MASA}. This is used in Section~\ref{sec:MASA} to give an explicit description of the actions of the even maximal Abelian subgroups on the super unit ball. Most of these are easier to present in the super Siegel domain. There exists 5 non-equivalent types of even maximal Abelian subgroups for our supermanifold setup, one of them depending on a parameter for a total of $n+2$ different conjugacy classes. We label the five types with the names quasi-elliptic, quasi-parabolic, nilpotent, quasi-hyperbolic, and quasi-nilpotent.

Section~\ref{sec:superBargmann} introduces a super Bargmann transform corresponding to each one of the conjugacy classes of even maximal Abelian subgroups mentioned above. These super Bargmann transforms generalize those presented in \cite{QV2}. At the same time, our transforms allow us to prove that the $C^*$-algebra generated by the Toeplitz operators whose symbols are invariant by one of the even maximal Abelian subgroups is commutative. This is obtained in Section~\ref{sec:commToeplitz} and is thus the core of this work. The relevant results, for each type of even maximal Abelian subgroups, are Theorems~\ref{thm:q-elliptic}, \ref{thm:q-parabolic}, \ref{thm:nilpotent}, \ref{thm:q-nilpotent}, \ref{thm:q-hyperbolic}. We note that our results have the same strength of those for the classical case in that our approach using a super Bargmann transform allows us to realize the commuting Toeplitz operators in each case as multiplication operators.

\section{Weighted super Bergman spaces and projections}
\label{sec:superBergman}

For $p \geq 1$, let $\mathcal{O}(\mathbb{B}^p )$ denote the algebra of holomorphic functions $\psi(z_1,\ldots,z_p)$ on the open unit ball
$$
    \mathbb{B}^p= \left\{z=(z_1,\ldots,z_p): |z|^2=|z_1|^2+\cdots+|z_p|^2<1    \right\}
$$
in $\mathbb{C}^p$.

For $p \geq 1$, let $\mathcal{O}(\mathbb{U}^p )$ denote the algebra of all functions $\psi(w_1,\ldots,w_p)$ that are holomorphic on the Siegel domain
               $$\mathbb{U}^p= \left\{w=(w',w_p)=(w_1,\ldots,w_p)\in \mathbb{C}^p :     \text{Im}(w_p)-|w'|^2>0  \right\},$$
where $w'\in \mathbb{C}^{p-1}$.

\begin{defn}
For $\nu> p$, the weighted Bergman space
               $$H^2_{\nu}(\mathbb{B}^p )=\mathcal{O}(\mathbb{B}^p )\cap L^2(\mathbb{B}^p,d\mu_{\nu} )$$
consists of all holomorphic functions on $\mathbb{B}^p $ which are square-integrable for the probability measure
$$d\mu_{\nu}=c_{\nu}(1-z\bar{z})^{\nu-p-1} dz, \ \ \nu>p,$$
where the normalizing constant is given by
$$c_{\nu}=\frac{\Gamma(\nu)}{\pi^p\Gamma(\nu-p)}$$
with $z \bar{w} =z_1\bar{w}_1+\cdots+z_p\bar{w}_p$ on $\mathbb{C}^p$ and $dz$ is the Lebesgue measure.
Correspondingly, the weighted Bergman space
               $$H^2_{\nu}(\mathbb{U}^p )=\mathcal{O}(\mathbb{U}^p )\cap L^2(\mathbb{U}^p,d\tilde{\mu}_{\nu} )$$
consists of all holomorphic functions on $\mathbb{U}^p $ which are square-integrable for the probability measure
$$d\tilde{\mu}_{\nu}=\frac{c_{\nu}}{4}(\text{Im}(w_p)-w' \bar{w}')^{\nu-p-1} dz,\ \ \nu>p,$$
where $dz$ is the Lebesgue measure.
\end{defn}

It is well-known (see \cite{Z}) that $H^2_{\nu}(\mathbb{B}^p )$ has the reproducing kernel
$$ K_{\mathbb{B}^p,\nu}(z,w)=(1- z \bar{w})^{-\nu},$$
for all $z, w \in \mathbb{B}^p$.
And that  $H^2_{\nu}(\mathbb{U}^p )$ has the reproducing kernel
$$ K_{\mathbb{U}^p,\nu}(z,w)=\left( \frac{z_n-\bar{w}_n}{2i}-z'\bar{w'}  \right)^{-\nu},  $$
for all $z, w \in \mathbb{U}^p$.

Let $\Lambda_q$ denote the complex Grassmann algebra with the generators $\xi_1,\ldots,\xi_q$
satisfying the relations
   $$\xi_i\xi_j+\xi_j\xi_i=0,$$
for $1\leq i, j \leq q$. If we take $Q := \{1, \ldots , q\}$, then we have
              $$\Lambda_q= \mathbb{C}\langle  \xi_I: I\subset Q \rangle,$$
where $\xi_I=\xi_{i_1}\cdots\xi_{i_k}$
if $I = \{i_1 <\cdots < i_k \}$. For disjoint subsets $I, J$ we have
                   $\xi_I\xi_J=\varepsilon_{I,J}\xi_{I\cup J}$
for some $\varepsilon_{I,J} = \pm 1$ whose value depends on the pair $I,J$.

The tensor product algebra
              $$\mathcal{O}(\mathbb{B}^{p|q} ):=\mathcal{O}(\mathbb{B}^p )\otimes \Lambda_q$$
consists of all super-holomorphic functions
               $$\Psi(z,\xi)=\sum_{I\subset Q} \Psi_I \xi_I,$$
where $\Psi_I\in \mathcal{O}(\mathbb{B}^p )$ for all $I \subset Q$.
In a similar way, the tensor product algebra
              $$\mathcal{O}(\mathbb{U}^{p|q} ):=\mathcal{O}(\mathbb{U}^p )\otimes \Lambda_q$$
consists of all super-holomorphic functions. In this case we have a similar expression for the super-holomorphic functions.

Let $\Lambda_q^{\mathbb{C}}$ denote the complex Grassmann algebra with the generators $\xi_1,\ldots,\xi_q$, $\bar{\xi}_1,\ldots,\bar{\xi}_q$
satisfying the relations
\begin{align*}
    \xi_i\xi_j+\xi_j\xi_i&=0, \\
    \bar{\xi}_i\bar{\xi}_j+\bar{\xi}_j\bar{\xi}_i&=0, \\
    \xi_i\bar{\xi}_j+\xi_j\bar{\xi}_i&=0,
\end{align*}
for $1\leq i, j \leq q$.

Thus we have
              $$\Lambda_q^{\mathbb{C}}= \mathbb{C}\langle  \xi_I \xi^*_J: I,J\subset Q \rangle,$$
where $\xi_I=\xi_{i_1}\cdots\xi_{i_k}$ and $\xi_I^*=\bar{\xi}_{j_l}\cdots\bar{\xi}_{j_1}$
if $I = \{i_1 <\cdots < i_k \}$ and $J=\{j_1\ldots,j_l\}$.

The tensor product algebra
              $$\mathcal{C}(\mathbb{B}^{p|q} ):=\mathcal{C}(\mathbb{B}^p )\otimes \Lambda_q^{\mathbb{C}}$$
consists of all continuous super functions
               $$\Psi=\sum_{I,J\subset Q} \Psi_{I,J} \xi_I\xi^*_J,$$
where $\Psi_{I,J}\in \mathcal{C}(\mathbb{B}^p )$ for all $I,J \subset Q$.
Similarly
$$\mathcal{C}(\mathbb{U}^{p|q} ):=\mathcal{C}(\mathbb{U}^p )\otimes \Lambda_q^{\mathbb{C}}$$
consists of all continuous super functions
               $$\Psi=\sum_{I,J\subset Q} \Psi_{I,J} \xi_I\xi^*_J,$$
where $\Psi_{I,J}\in \mathcal{C}(\mathbb{U}^p )$ for all $I,J \subset Q$.

There is a natural involution $\Psi \mapsto \Psi^*$ on these spaces of super functions, which is defined by
$$\Psi^*=\sum_{I,J\subset Q} \bar{\Psi}_{I,J} \xi_J\xi^*_I,$$
for $\Psi$ as above.

The Berezin integral on $\mathbb{V}^{p|q}$ is defined by
$$\int_{\mathbb{V}^{p|q}} dz d\xi F(z,\xi)=\int_{\mathbb{V}^{p}}  f_{Q,Q}(z)dz$$
for $F\in\mathcal{C}(\mathbb{V}^{p|q} )$, where the normalization is given by
 $$\int_{\mathbb{V}^{p|q}} dz d\xi \xi_Q^* \xi_Q=1,$$
where $\mathbb{V}^p$ is either $\mathbb{B}^p$ or $\mathbb{U}^p$.


For any given  morphism $\gamma$ between super domains we define
\begin{equation}\label{BerLSG}
    \gamma'(Z)=
    \text{Ber}
        \begin{pmatrix*}
            \frac{\partial w}{\partial z} & \frac{\partial \omega}{\partial z} \\
            \frac{\partial w}{\partial \xi} & \frac{\partial \omega}{\partial \xi} \\
        \end{pmatrix*}
    = \text{Ber}\frac{\partial W}{\partial Z},
\end{equation}
where $Z=(z_1,\ldots,z_p,\xi_1,\ldots, \xi_q)$, $W= (w_1,\ldots,w_p,\omega_1,\ldots, \omega_q)$ and where the Berezinian is defined as follows
\begin{equation}\label{Berezinian}
\text{Ber}
\begin{pmatrix*}
  A & B \\
  C & D \\
\end{pmatrix*}
=\det(A-BD^{-1}C) \det(D)^{-1}.
\end{equation}
We refer to \cite{B} for more details.

We now recall a natural biholomorphism of supermanifolds between the super $p$-ball $\mathbb{B}^{p|q}$ and the super Siegel domain $\mathbb{U}^{p|q}$.

We define the super Cayley transform from $\mathbb{B}^{p|q }$ to $\mathbb{U}^{p|q }$ in local coordinates by
\begin{equation}\label{Cayley}
\psi(z_1,\ldots,z_p,\xi_1,\ldots, \xi_q)= (w_1,\ldots,w_p,\omega_1,\ldots, \omega_q),
\end{equation}
where
\begin{align*}
w_k&=\frac{i z_k}{1+z_p}, \text{ for }k=1,\ldots, p-1,\\
w_p&=i\frac{1-z_p}{1+z_p},\\
\omega_k&=\frac{i \xi_k}{1+z_p}, \text{ for }k=1,\ldots, q.
\end{align*}
The inverse transform is given by
\begin{align*}
z_k&=\frac{-2i w_k}{1-iw_p}, \text{ for }k=1,\ldots, p-1,\\
z_p&=i\frac{1+iw_p}{1-iw_p},\\
\xi_k&=\frac{-2i \omega_k}{1-iw_p}, \text{ for }k=1,\ldots, q.
\end{align*}

\begin{lem}\label{lema-d-p-form}
Let $Z=(z,\xi)$, $U=(u,\eta)$,  $\psi(Z)=(w,\omega)=W$
and $\psi(U)=(v,\zeta)=V$ where $\psi$ is given by (\ref{Cayley}). Then, we have
\begin{equation}\label{exp-inv-1-p-d}
    (1+z\bar{u}-\xi\bar{\eta})(1+z_p)^{-1}  \overline{ (1+u_p)^{-1} } =\left(\frac{ w_p-\bar{v_p}}{2i}-w'\bar{v'}- \omega\bar{\zeta} \right),
\end{equation}
\begin{equation}\label{exp-inv-2p-d}
   (1+z\bar{u}-\xi\bar{\eta}) =\left(\frac{ w_p-\bar{v_p}}{2i}-w'\bar{v'}- \omega\bar{\zeta}\right)   4 (1-iw_p)^{-1}  \overline{ (1-iv_p)^{-1} },
\end{equation}
where $\xi\bar{\theta}=\xi_1\bar{\theta_1}+\cdots+\xi_q\bar{\theta}_q$   and $z\bar{w}=z_1\bar{w_1}+\cdots+z_p\bar{w}_p$.
\end{lem}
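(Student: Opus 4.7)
The plan is to prove (\ref{exp-inv-1-p-d}) by direct substitution into the Cayley transform formulas and algebraic simplification, and then to obtain (\ref{exp-inv-2p-d}) essentially for free by observing that the two stated identities differ by a multiplicative factor that is an elementary Cayley-transform identity. Specifically, plugging $w_p = i(1-z_p)/(1+z_p)$ into $1-iw_p$ gives $1-iw_p = 2(1+z_p)^{-1}$, and taking the conjugate of the analogous identity for $v_p$ yields $\overline{(1-iv_p)^{-1}} = (1+\bar{u}_p)/2$. Multiplying these produces $4(1-iw_p)^{-1}\overline{(1-iv_p)^{-1}} = (1+z_p)(1+\bar{u}_p)$, which is exactly the factor converting (\ref{exp-inv-1-p-d}) to (\ref{exp-inv-2p-d}).

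The remaining task is to verify (\ref{exp-inv-1-p-d}). I would bring the right-hand side over the common denominator $(1+z_p)(1+\bar{u}_p)$ by computing the three summands separately. For the scalar contribution $(w_p - \bar{v}_p)/(2i)$, substitute the formulas for $w_p$ and $\bar{v}_p$, combine the two fractions, and simplify the numerator to a clean expression in $z_p, \bar{u}_p$. For the transversal piece $w'\bar{v}'$, observe that the factors of $i$ and $-i$ cancel, giving $w_k\bar{v}_k = z_k\bar{u}_k / [(1+z_p)(1+\bar{u}_p)]$ for $1 \le k \le p-1$. For the Grassmann piece, an identical computation yields $\omega_k\bar{\zeta}_k = \xi_k\bar{\eta}_k / [(1+z_p)(1+\bar{u}_p)]$ for $1 \le k \le q$. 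Summing and collecting then matches the left-hand side after one cancels $(1+z_p)^{-1}\overline{(1+u_p)^{-1}}$ on both sides.

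I do not anticipate any substantive obstacle: the argument is bookkeeping in Cayley-transform arithmetic. The one point that merits attention is the Grassmann part. Since $\omega_k$ and $\bar{\zeta}_k$ are odd while the scalar coefficients $(1+z_p)^{-1}$ and $(1+\bar{u}_p)^{-1}$ are purely even elements of the underlying superalgebra, they are central, and no extra signs arise when pulling them past $\xi_k$ or $\bar{\eta}_k$. Consequently the super case reduces cleanly to the classical Cayley-transform identity plus an additive Grassmann correction, and the whole lemma is established by one coherent calculation.
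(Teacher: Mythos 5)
Your proposal is correct and follows essentially the same route as the paper: substitute the Cayley formulas into the right-hand side, combine over the common denominator $(1+z_p)(1+\bar{u}_p)$ (with the $i\cdot\overline{i}=1$ cancellation in the primed and Grassmann terms), and deduce the second identity from $1+z_p = 2(1-iw_p)^{-1}$. One caveat: actually carrying out the numerator simplification yields $1 - z\bar{u} - \xi\bar{\eta}$ (consistent with the reproducing kernel $(1-z\bar{w})^{-\nu}$), so the plus sign in the lemma's left-hand side is a sign typo in the statement (also present in the last line of the paper's own proof) that your computation should flag rather than reproduce.
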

\begin{proof}
Using the above and substituting $\psi(Z)=(w,\omega)=W$
and $\psi(U)=(v,\zeta)=V$ on the left hand side of equation (\ref{exp-inv-1-p-d}) it follows that
\begin{align*}
&\left(\frac{ w_p-\bar{v_p}}{2i} - w'v'-\omega\bar{\zeta}\right)\\
=&
\left(\frac{1}{2i}\left[i\frac{1-z_p}{1+z_p}-\overline{i\frac{1-u_p}{1+u_p}}\right] -
\sum_{k=1}^{p-1}\frac{i z_k}{1+z_p}\overline{\frac{i u_k}{1+u_p}}-
\sum_{k=1}^{q}\frac{i \xi_k}{1+z_p}\overline{\frac{i \eta_k}{1+u_p}}\right)\\
=&  \left(\frac{1}{2}\left[(1-z_p)(1+\overline{u_p})  +(1-\overline{u_p})(1+z_p)\right] -
\sum_{k=1}^{p-1}  z_k \overline{u_k} -
\sum_{k=1}^{q} \xi_k \overline{ \eta_k} \right)\\
&\times \frac{1}{(1+z_p)}\frac{1}{(1+\bar{u_p})}\\
=&    \frac{1}{(1+z_p)}\frac{1}{(1+\bar{u_p})} \left(1-z_p\overline{u_p}- \sum_{k=1}^{p-1}  z_k \overline{v_k} -
\sum_{k=1}^{q} \xi_k \overline{ \eta_k} \right)\\
=&(1+z_p)^{-1}  \overline{ (1+u_p)^{-1} } (1+z\bar{u}-\xi\bar{\eta}).
\end{align*}
Its clear that $1+z_p=2(1-iw_p)^{-1}$, then we obtain \eqref{exp-inv-2p-d}.
\end{proof}


\begin{defn}
 For any parameter $\nu > p - q + 1$, the (weighted) super-Bergman
space
$$H^2_{\nu}(\mathbb{B}^{p|q })\subset\mathcal{O}(\mathbb{B}^{p|q} )$$
consists of all super-holomorphic functions $\Psi(z,\xi)$ which satisfy the square-integrability condition
\begin{align*}
&(\Psi|\Psi)_{\mathbb{B}^{p|q},\nu} = \\
&\frac{\Gamma(\nu)}{\pi^p\Gamma(\nu+q-p)} \int_{\mathbb{B}^{p|q}} dz d\xi (1-z\bar{z}-\xi\bar{\xi})^{\nu+q-p-1} \Psi(z,\xi)^*\Psi(z,\xi)<\infty,
\end{align*}
where $\xi\bar{\theta}=\xi_1\bar{\theta_1}+\cdots+\xi_q\bar{\theta}_q$   and $z\bar{w}=z_1\bar{w_1}+\cdots+z_p\bar{w}_p$.
Moreover
$$\frac{(1-z\bar{z}-\xi\bar{\xi})^{\nu+q-p-1}}{\pi^p\Gamma(\nu+q-p)} =\sum_{J\subset Q} \frac{(1-z\bar{z})^{\nu+|J|-p-1}}{\pi^p\Gamma(\nu+|J|-p)} \xi_{Q\setminus J}^*\xi_{Q\setminus J}$$
\end{defn}

In \cite{LU} the authors proved that the super Bergman space has a decomposition in direct sum of classical Bergman space and gave an explicit expression for the super Bergman projection. If we take  $\Psi=\sum_{M\subset Q} \psi_M \xi_M \in H^2_{\nu}(\mathbb{B}^{p|q })
\subset\mathcal{O}(\mathbb{B}^{p|q} )$ then the inner product has the form
\begin{align} \nonumber
& \frac{\Gamma(\nu)}{\pi^p\Gamma(\nu+q-p)} \int_{\mathbb{B}^{p|q}} dz d\xi (1-z\bar{z}-\xi\bar{\xi})^{\nu+p-p-1} \Psi(z,\xi)^*\Psi(z,\xi) = \\
\label{bergman-descoposition}
&\sum_{m=1}^q    \frac{\Gamma(\nu)}{\Gamma(\nu+m)}   \sum_{M\subset Q, |M|=m}  \| \psi_M(z) \|^2_{\mathbb{ B}^{p},\nu+m}.
\end{align}
Thus the super Bergman space over the super ball  has an orthogonal decomposition
                          $$H^2_{\nu}(\mathbb{B}^{p|q })= \sum_{m=0}^q H^2_{\nu+m}(\mathbb{B}^{p })\otimes\Lambda^m(\mathbb{C^q}) $$
into a sum of weighted Bergman spaces for $0 \leq m \leq q$, with multiplicity $\binom{m}{q}$. Moreover, this super Bergman space has  the reproducing kernel property
\begin{align*}
    P_{\mathbb{B}^{p|q },\nu}\Psi(z,\xi)=
   &\frac{\Gamma(\nu)}{\pi^p\Gamma(\nu+q-p)} \int_{\mathbb{B}^{p|q}} dw d\omega (1-w\bar{w}-\omega\bar{\omega})^{\nu+q-p-1}\\
   &\times (1-z\bar{w}-\xi\bar{\omega})^{-\nu} \Psi(w,\omega)=\Psi(w,\omega).
\end{align*}
In other words, $H^2_{\nu}(\mathbb{B}^{p|q })$ has the reproducing kernel
$$K_{\mathbb{B}^{p|q},\nu} (z, \xi, w, \omega) =(1-z\bar{w}-\xi\bar{\omega})^{-\nu}.$$

\begin{defn}
For any parameter $\nu > p - q + 1$, the (weighted) super-Bergman
space
$$H^2_{\nu}(\mathbb{U}^{p|q })\subset\mathcal{O}(\mathbb{U}^{p|q} )$$
consists of all super-holomorphic functions $\Psi(w,\omega)$ which satisfy the square-integrability condition
\begin{align*}
(\Psi|\Psi)_{\mathbb{U}^{p|q},\nu}=&
\frac{\Gamma(\nu)}{4 \pi^p\Gamma(\nu+q-p)}
\int_{\mathbb{U}^{p|q}} dw d\omega ( \text{Im}(w_p)-w'\bar{w'}- \omega\bar{\omega})^{\nu+q-p-1} \\ & \times\Psi(z,\omega)^*\Psi(w,\omega)<\infty,
\end{align*}
where $\xi\bar{\omega}=\xi_1\bar{\omega}_1+\cdots+\xi_q\bar{\omega}_q$   and $z'\bar{w}'=z_1\bar{w}_1+\cdots+z_{p-1}\bar{w}_{p-1}$. Where one can prove that
\begin{align*}
&  \frac{1}{4\pi^p\Gamma(\nu+q-p)}\left(\frac{ w_p-\bar{w_p}}{2i}-w'\bar{w'}- \omega\bar{\omega}\right)^{\nu+q-p-1}\\
&=
\sum_{J\subset Q} \frac{1}{4\pi^p\Gamma(\nu+|J|-p)}\left(\frac{ w_p-\bar{w_p}}{2i}-w'\bar{w'}\right)^{\nu+|J|-p-1}
\omega_{Q\setminus J}^*\omega_{Q\setminus J}.
\end{align*}
\end{defn}

We now observe that the Berezinian of the Jacobian matrix of the transformation $\psi$ given by \ref{Cayley} satisfies
$$\psi'(Z)=\text{Ber}
    \begin{pmatrix*}
              A &  B \\
              0 & C \\
    \end{pmatrix*}
=\det(A) \cdot\det(C^{-1}),$$
where $$A=\text{Diagonal}( i(1+z_p)^{-1},\ldots, i(1+z_p)^{-1},-2i (1+z_p)^{-2})$$ and $$C=\text{Diagonal}( i(1+z_p)^{-1},\ldots, i(1+z_p)^{-1}).$$
As a consequence we obtain that $\psi'(Z)= -2 i^{p-q+1}(1+z_p)^{q-p-1}$ and analogously $(\psi^{-1})'(W)= -2^{p-q} i^{q-p-1}(1-iw_p)^{q-p-1}$.

\begin{defn}
We define the operator
$U_{\nu}:H^2_{\nu}(\mathbb{B}^{p|q })\rightarrow H^2_{\nu}(\mathbb{U}^{p|q })$
given by
$$U_{\nu}(\Psi)(W)=\Psi(\psi^{-1}(W)) \left( \frac{2}{1-iw_p}  \right)^{\nu},$$
and its adjoint
$U^*_{\nu}: H^2_{\nu}(\mathbb{U}^{p|q })\rightarrow H^2_{\nu}(\mathbb{B}^{p|q })$
given by
$$U^*_{\nu}(\Psi)(Z)=\Psi(\psi(Z)) \left( \frac{1}{1+z_p}  \right)^{\nu}.$$
\end{defn}

\begin{thm}\label{unitary-B-S}
 The operador $U_{\nu}:H^2_{\nu}(\mathbb{B}^{p|q })\rightarrow H^2_{\nu}(\mathbb{U}^{p|q })$ is unitary.
\end{thm}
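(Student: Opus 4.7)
The plan is to show $U_\nu$ is an isometry and then observe that $U_\nu^*$ (as given) is a genuine two-sided inverse, so $U_\nu$ is unitary. The holomorphy and $\Lambda_q$-linearity of $U_\nu$ make it clear that $U_\nu \Psi \in \mathcal{O}(\mathbb{U}^{p|q})$, so the only substantive task is to match norms.

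First I would compute $(U_\nu \Psi)^*(W)(U_\nu \Psi)(W)$. Since the super-involution conjugates the scalar factor $\left(2/(1-iw_p)\right)^\nu$ to $\left(2/(1+i\bar{w}_p)\right)^\nu$ and commutes with composition by the even supermap $\psi^{-1}$, this product equals
\begin{equation*}
(\Psi^*\Psi)(\psi^{-1}(W)) \cdot \left(\frac{4}{|1-iw_p|^2}\right)^{\nu}.
\end{equation*}
Now I would change variables $W = \psi(Z)$ in the Berezin integral defining $\|U_\nu \Psi\|^2_{\mathbb{U}^{p|q},\nu}$. Using the Berezinian computation preceding the definition of $U_\nu$, the Berezin measure transforms as $dw\,d\omega = |\psi'(Z)|^2\, dz\,d\xi = 4|1+z_p|^{2(q-p-1)}\,dz\,d\xi$.

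The heart of the calculation is the identity from Lemma~\ref{lema-d-p-form}, specialized to $U = Z$ (hence $V = W$):
\begin{equation*}
\text{Im}(w_p) - w'\bar{w'} - \omega\bar{\omega} = \frac{1 - z\bar{z} - \xi\bar{\xi}}{|1+z_p|^2},
\end{equation*}
combined with the elementary relation $|1-iw_p|^2 = 4/|1+z_p|^2$ derived from $1+z_p = 2(1-iw_p)^{-1}$. Raising the first to the $(\nu+q-p-1)$-st power and the second to the $\nu$-th power, and then multiplying by the Berezinian factor $4|1+z_p|^{2(q-p-1)}$ and the prefactor $1/4$ from the definition of $d\tilde\mu_\nu$, the net exponent of $|1+z_p|^2$ collected is
\begin{equation*}
(q-p-1) - (\nu+q-p-1) + \nu = 0,
\end{equation*}
so all factors of $|1+z_p|$ disappear and the $4$ cancels the $1/4$. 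What remains is exactly the integrand defining $\|\Psi\|^2_{\mathbb{B}^{p|q},\nu}$, proving isometry.

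The main obstacle I anticipate is bookkeeping: verifying that the super change-of-variables formula for the Berezin integral does contribute $|\psi'(Z)|^2$ (and not just $\psi'(Z)$) when pulling back a real, positive density, and ensuring that the super-involution commutes correctly with $\psi^{-1}$ on $\mathcal{O}(\mathbb{U}^{p|q})$. Once the isometry is in hand, unitarity follows by the symmetric computation: the map $U^*_\nu$ of the definition, arising from the inverse Cayley transform with its own weight $(1+z_p)^{-\nu}$, satisfies $U^*_\nu U_\nu \Psi = \Psi$ and $U_\nu U^*_\nu \Psi = \Psi$ by the chain rule $\psi^{-1}\circ\psi = \mathrm{id}$ and the identity $(1+z_p)^{-1}\cdot 2/(1-iw_p) = 1$ when $W = \psi(Z)$; thus $U_\nu$ is invertible and isometric, hence unitary.
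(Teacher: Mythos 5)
Your proposal is correct and follows essentially the same route as the paper: a change of variables under the super Cayley transform, the identity of Lemma~\ref{lema-d-p-form} specialized to $U=Z$, and the Berezinian factor $|\psi'(Z)|^2$ to match the weights (you run the substitution from $\mathbb{U}^{p|q}$ to $\mathbb{B}^{p|q}$ while the paper runs it the other way, which is immaterial). Your explicit remark that $U_\nu^*$ is a two-sided inverse, so that isometry upgrades to unitarity, is a small point the paper leaves implicit.
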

\begin{proof}
 Consider $\Psi\in H^2_{\nu}(\mathbb{B}^{p|q })$. Then we have
\[
(\Psi|\Psi)_{\mathbb{B}^{p|q},\nu}
=\frac{\Gamma(\nu)}{\pi^p\Gamma(\nu+q-p)} \int_{\mathbb{B}^{p|q}} dz d\xi (1-z\bar{z}-\xi\bar{\xi})^{\nu+q-p-1} \Psi(z,\xi)^*\Psi(z,\xi).
\]

Using Lemma \ref{lema-d-p-form} and the change of variable given by the super Cayley transform \eqref{Cayley} on the right side of the above equation, we obtain
\begin{align*}
&\frac{\Gamma(\nu)}{\pi^p\Gamma(\nu+q-p)}
\int_{\mathbb{U}^{p|q}} dw d\omega   (\psi^{-1})'(W) \overline{(\psi^{-1})'(W)}
  \\
&\times  (\text{Im}(w_p)-w'\bar{w'}- \omega\bar{\omega})^{\nu+q-p-1}4^{\nu+q-p-1} ((1-iw_p)^{-1} )^{\nu+q-p-1}   \\
&\times  \overline{ (1-iw_p)^{-1} })^{\nu+q-p-1}    \Psi(\psi^{-1}(W))^*\Psi(\psi^{-1}(W)) \\
=&\frac{\Gamma(\nu)}{\pi^p\Gamma(\nu+q-p)}
\int_{\mathbb{U}^{p|q}} dw d\omega   4^{p-q} (1-iw_p)^{q-p-1} \overline{ (1-iw_p)^{q-p-1}}  \\
&\times \left(\frac{ w_p-\bar{w_p}}{2i}-w'\bar{z'}- \omega\bar{\omega}\right)^{\nu+q-p-1}   4^{\nu+q-p-1} ((1-iw_p)^{-1} )^{\nu+q-p-1}  \\
&\times     \overline{ (1-iw_p)^{-1} })^{\nu+q-p-1}    \Psi(\psi^{-1}(W))^*\Psi(\psi^{-1}(W))  \\
=&\frac{\Gamma(\nu)}{4\pi^p\Gamma(\nu+q-p)}
\int_{\mathbb{U}^{p|q}} dw d\omega    \left(\frac{ w_p-\bar{w_p}}{2i}-w'\bar{z'}- \omega\bar{\omega}\right)^{\nu+q-p-1} \\
&\times  [\Psi(\psi^{-1}(W)) \left( \frac{2}{1-iw_p}  \right)^{\nu} ]^*[\Psi(\psi^{-1}(W))\left( \frac{2}{1-iw_p}  \right)^{\nu}] \\
=&(U_{\nu}(\Psi)|U_{\nu}(\Psi))_{\mathbb{U}^{p|q},\nu}.
\end{align*}
\end{proof}


\begin{cor}
If $\Psi=\sum_{M\subset Q} \psi_M \xi_M \in \mathcal{O}_{\nu}(\mathbb{U}^{p|q })$, then we have
\begin{align*}
\frac{\Gamma(\nu)}{\pi^p\Gamma(\nu+q-p)}
\int_{\mathbb{U}^{p|q}} dw d\omega &\left(\frac{ w_p-\bar{w_p}}{2i}-w'\bar{w'}- \omega\bar{\omega}\right)^{\nu+q-p-1} \\ &\times \Psi(w,\omega)^*\Psi(w,\omega) \\
=&\sum_{m=1}^q    \frac{\Gamma(\nu)}{\Gamma(\nu+m)}   \sum_{M\subset Q, |M|=m}  \| \psi_M(z) \|^2_{\mathbb{U}^{p},\nu+m}.
\end{align*}
In other words, there is an orthogonal decomposition
$$H^2_{\nu}(\mathbb{U}^{p|q })= \sum_{m=0}^q H^2_{\nu+m}(\mathbb{U}^{p })\otimes\Lambda^m(\mathbb{C}^q) $$
into a sum of weighted Bergman spaces for $0 \leq m \leq q$, whose corresponding multiplicities are $\binom{m}{q}$.
\end{cor}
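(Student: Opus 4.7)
The plan is to reduce the claim to the analogous identity \eqref{bergman-descoposition} on the super ball by transporting through the unitary $U_\nu\colon H^2_\nu(\mathbb{B}^{p|q}) \to H^2_\nu(\mathbb{U}^{p|q})$ established in Theorem \ref{unitary-B-S}. For $\Psi = \sum_M \psi_M\, \omega_M \in H^2_\nu(\mathbb{U}^{p|q})$, I would first compute $U_\nu^*(\Psi)$ explicitly. Since the inverse super Cayley transform sends $\omega_k$ to $i\xi_k/(1+z_p)$, one gets $\omega_M \mapsto i^{|M|}(1+z_p)^{-|M|}\xi_M$, and hence
$$U_\nu^*(\Psi)(Z) = \sum_{M \subset Q} i^{|M|}\, \psi_M(\psi(Z))\, (1+z_p)^{-\nu-|M|}\, \xi_M.$$
Writing $U_\nu^*(\Psi) = \sum_M \phi_M\, \xi_M$, each component $\phi_M$ equals $i^{|M|}$ times the classical Cayley pullback of $\psi_M$ at the shifted weight $\nu + |M|$. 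The classical Cayley transform at weight $\nu+|M|$ induces a unitary equivalence $H^2_{\nu+|M|}(\mathbb{U}^p) \to H^2_{\nu+|M|}(\mathbb{B}^p)$ (the $q=0$ case of Theorem \ref{unitary-B-S}), giving $\|\phi_M\|_{\mathbb{B}^p,\nu+|M|} = \|\psi_M\|_{\mathbb{U}^p,\nu+|M|}$.

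Applying Theorem \ref{unitary-B-S} and expanding via \eqref{bergman-descoposition} then yields
$$(\Psi|\Psi)_{\mathbb{U}^{p|q},\nu} = (U_\nu^*\Psi | U_\nu^*\Psi)_{\mathbb{B}^{p|q},\nu} = \sum_{m=0}^q \frac{\Gamma(\nu)}{\Gamma(\nu+m)} \sum_{|M|=m} \|\phi_M\|^2_{\mathbb{B}^p,\nu+m},$$
and the preceding norm identities convert the right hand side into the announced expression. The orthogonal decomposition $H^2_\nu(\mathbb{U}^{p|q}) = \bigoplus_{m=0}^q H^2_{\nu+m}(\mathbb{U}^p) \otimes \Lambda^m(\mathbb{C}^q)$ then follows either by polarization of the diagonal identity, or equivalently by observing that the calculation above shows $U_\nu$ to carry each $\xi_M$-sector of the super ball decomposition onto the corresponding $\omega_M$-sector on the super Siegel domain.

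The main technical point is the identification of $\phi_M$ with a classical Cayley pullback of $\psi_M$ at the shifted weight $\nu + |M|$, but this is a bookkeeping step of exactly the type already performed in the proof of Theorem \ref{unitary-B-S}. An entirely parallel alternative, mirroring the derivation of \eqref{bergman-descoposition} given in \cite{LU}, would substitute into the integral both the expansion of the density provided in the definition and the expression $\Psi^*\Psi = \sum_{M,N} \bar\psi_M \psi_N\, \omega_M^*\omega_N$, then invoke the Berezin integral to extract the coefficient of $\omega_Q^*\omega_Q$; the Grassmann algebra forces $M = N = J$ and collapses the triple sum to the claimed one.
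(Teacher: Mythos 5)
Your argument is correct and is essentially the paper's own proof: the paper likewise decomposes the super Cayley unitary componentwise, writing $U_{\nu}(\Psi)=\sum_{M}(-i)^{|M|}V_{\nu+|M|}(\psi_M)\,\omega_M$ with $V_{\nu+|M|}$ the classical weighted Cayley unitary at the shifted weight $\nu+|M|$, and then invokes unitarity together with the ball-side identity \eqref{bergman-descoposition}. Your working with $U_\nu^*$ instead of $U_\nu$ (and the unimodular factor $i^{|M|}$ versus $(-i)^{|M|}$) is an immaterial change of direction.
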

\begin{proof}
We rewrite the operator $U_{\nu}$ as follows
\begin{align*}
U_{\nu}(\Psi)(W)&= \sum_{M\subset Q}(-i)^{|M|} \psi_M( \psi_0^{-1}(w) ) \left( \frac{2}{1-iw_p}  \right)^{\nu+|M|}  \omega_M\\
& = \sum_{M\subset Q}(-i)^{|M|} V_{\nu+|M|}(\psi_M)(w) \omega_M,
\end{align*}
where $V_{\nu+m}:H^2_{\nu+m}(\mathbb{B}^{p })\rightarrow H^2_{\nu+m}(\mathbb{U}^{p })$ is defined by
$$V_{\nu+m}(f)=f( \psi_0^{-1}(w) ) \left( \frac{2}{1-iw_p}  \right)^{\nu+|M|}.$$
Since we known that $V_{\nu+m}$ is a unitary operator, the result follows.
\end{proof}


\begin{prop}
For $\nu > p$ and $\Psi=\sum_{M\subset Q} \psi_M \xi_M \in \mathcal{O}_{\nu}(\mathbb{U}^{p|q })$, we have the reproducing kernel property
\begin{align*}
 P_{\mathbb{U}^{p|q },\nu}\Psi(w,\omega)=&\frac{\Gamma(\nu)}{\pi^p\Gamma(\nu+q-p)} \int_{\mathbb{U}^{p|q}} dv d\zeta
\left(\frac{ v_p-\bar{v_p}}{2i}-v'\bar{v'}- \zeta\bar{\zeta}\right)^{\nu+q-p-1} \\
&\times\left(\frac{ w_p-\bar{v_p}}{2i}-w'\bar{v'}- \omega\bar{\zeta} \right)^{-\nu} \Psi(v,\zeta)=\Psi(w,\omega).
\end{align*}
In particular, $H^2_{\nu}(\mathbb{U}^{p|q })$ has the reproducing kernel
$$K_{\mathbb{U}^{p|q},\nu} (w, \omega, v, \zeta) =\left(\frac{ w_p-\bar{v_p}}{2i}-w'\bar{v'}- \omega\bar{\zeta} \right)^{-\nu}.$$
\end{prop}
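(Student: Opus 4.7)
The plan is to transport the reproducing kernel property of the super ball, which is already recorded in the excerpt, to the super Siegel domain via the unitary equivalence $U_\nu:H^2_\nu(\mathbb{B}^{p|q})\to H^2_\nu(\mathbb{U}^{p|q})$ of Theorem~\ref{unitary-B-S}. Given $\Psi\in H^2_\nu(\mathbb{U}^{p|q})$, I would set $\widetilde\Psi=U_\nu^*\Psi$, so that $\widetilde\Psi(Z)=\Psi(\psi(Z))(1+z_p)^{-\nu}$, and apply the known super ball reproducing formula to $\widetilde\Psi$ at the point $Z=\psi^{-1}(W)$. In the resulting integral over $\mathbb{B}^{p|q}$ I would then perform the super Cayley change of variables $U=\psi^{-1}(V)$, $V\in\mathbb{U}^{p|q}$, using the Berezinian factor $(\psi^{-1})'(V)\,\overline{(\psi^{-1})'(V)}=4^{p-q}|1-iv_p|^{2(q-p-1)}$ computed just before Theorem~\ref{unitary-B-S}, exactly as in the proof of that theorem.

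The computational heart of the argument is the combination of scalar factors. Specializing Lemma~\ref{lema-d-p-form} to $Z=U$, $W=V$ yields
\[
1-u\bar u-\eta\bar\eta=(1+u_p)\overline{(1+u_p)}\bigl(\text{Im}(v_p)-v'\bar{v'}-\zeta\bar\zeta\bigr),
\]
while the general identity of the lemma (with the sign of $z\bar u$ in \eqref{exp-inv-1-p-d} understood as a minus, as is forced by the computation in its proof) gives
\[
1-z\bar u-\xi\bar\eta=(1+z_p)\overline{(1+u_p)}\left(\frac{w_p-\bar v_p}{2i}-w'\bar{v'}-\omega\bar\zeta\right).
\]
Raising these to the exponents $\nu+q-p-1$ and $-\nu$, combining with the Berezinian factor above and with the additional $(1+u_p)^{-\nu}$ coming from $\widetilde\Psi(\psi^{-1}(V))=\Psi(V)(1+u_p)^{-\nu}$, and using $(1+z_p)=2(1-iw_p)^{-1}$, I expect all powers of $1+u_p$ (equivalently, of $1-iv_p$) to collapse. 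The surviving factor $(1+z_p)^{-\nu}$ on the right matches $\widetilde\Psi(Z)=\Psi(W)(1+z_p)^{-\nu}$ on the left, and what remains is exactly the Siegel weight $\bigl(\text{Im}(v_p)-v'\bar{v'}-\zeta\bar\zeta\bigr)^{\nu+q-p-1}$ paired with the proposed kernel $\bigl(\tfrac{w_p-\bar v_p}{2i}-w'\bar{v'}-\omega\bar\zeta\bigr)^{-\nu}$.

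Once the reproducing identity is established, the identification of $K_{\mathbb{U}^{p|q},\nu}$ with the claimed expression is immediate from the uniqueness of reproducing kernels in Hilbert spaces of (super-)functions. The only real obstacle I anticipate is the bookkeeping of the many scalar factors coming from the Cayley Berezinian, from Lemma~\ref{lema-d-p-form}, and from the definition of $U_\nu$; since $U_\nu$ is unitary the correct cancellation is forced, so no genuine analytic difficulty arises.
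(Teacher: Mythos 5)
Your proposal is correct and follows essentially the same route as the paper: the paper likewise uses $P_{\mathbb{U}^{p|q},\nu}=U_\nu P_{\mathbb{B}^{p|q},\nu}U_\nu^*$, applies the ball reproducing formula to $U_\nu^*\Psi$, and performs the super Cayley change of variables, combining the Berezinian computed before Theorem~\ref{unitary-B-S} with Lemma~\ref{lema-d-p-form} to collapse the scalar factors exactly as you describe. Your remark that the sign of $z\bar{u}$ in \eqref{exp-inv-1-p-d} must be a minus is consistent with the computation in the paper's own proof of that lemma and with the form of the ball kernel $(1-z\bar{w}-\xi\bar{\omega})^{-\nu}$.
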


\begin{proof}

We know that $U_{\nu}$ is unitary, therefore the Bergman projection over $H^2_{\nu}(\mathbb{U}^{p|q })$ is given
by $$P_{\nu,\mathbb{U}^{p|q }}=U_{\nu}P_{\nu,\mathbb{B}^{p|q }}U^*_{\nu},$$
thus $P_{\mathbb{U}^{p|q },\nu} (\Psi)$ is given by
\begin{equation*}
  U_{\nu}\left[ \frac{\Gamma(\nu)}{\pi^p\Gamma(\nu+q-p)} \int_{\mathbb{B}^{p|q}} du d\eta (1-u\bar{u}-\eta\bar{\eta})^{\nu+q-p-1} (1-z\bar{u}-\xi\bar{\eta})^{-\nu} U^*_{\nu}(\Psi(V))\right].
\end{equation*}

Using the change of variable \eqref{Cayley}, taking
$W=(w,\omega)=\psi(z,\xi)=\psi(Z)$, $V=(v,\zeta)=\psi(u,\eta)=\psi(U)$ and
Lemma~\ref{lema-d-p-form}, we obtain
\begin{align*}
& P_{\mathbb{U}^{p|q },\nu} (\Psi)\\
=& \left( \frac{2}{1-iw_p}  \right)^{\nu} \frac{\Gamma(\nu)}{4\pi^p\Gamma(\nu+q-p)} \int_{\mathbb{U}^{p|q}} dv d\zeta
\left(\frac{ v_p-\bar{v_p}}{2i}-v'\bar{v'}- \zeta\bar{\zeta}\right)^{\nu+q-p-1}
 \\
&\times   (2(1-iv_p)^{-1} )^{\nu}   \overline{ (2(1-iv_p)^{-1} })^{\nu} \Psi(V) \left( \frac{1}{1+u_p}  \right)^{\nu} \\
&\times\left[  \left(\frac{ w_p-\bar{v_p}}{2i}-w'\bar{v'}- \omega\bar{\zeta}\right)   4 (1-iw_p)^{-1}  \overline{ (1-iv_p)^{-1} }\right]^{-\nu}\\
=&  \frac{\Gamma(\nu)}{4\pi^p\Gamma(\nu+q-p)} \int_{\mathbb{U}^{p|q}} dv d\zeta   (\frac{ v_p-\bar{v_p}}{2i}-v'\bar{v'}- \zeta\bar{\zeta})^{\nu+q-p-1} \\
&\times   \left(\frac{ w_p-\bar{v_p}}{2i}-w'\bar{v'}- \omega\bar{\zeta}    \right)^{-\nu} \Psi(V)
   \left( \frac{1}{1+u_p}  \right)^{\nu}  \left( \frac{2}{1-iv_p}  \right)^{\nu}\\
=&\frac{\Gamma(\nu)}{4\pi^p\Gamma(\nu+q-p)} \int_{\mathbb{U}^{p|q}} dv d\zeta   (\frac{ v_p-\bar{v_p}}{2i}-v'\bar{v'}- \zeta\bar{\zeta})^{\nu+q-p-1}\\
&\times
 \left(\frac{ w_p-\bar{v_p}}{2i}-w'\bar{v'}- \omega\bar{\zeta}    \right)^{-\nu} \Psi(V).
\end{align*}
\end{proof}

\section{The super-group $\SU(p,1|q)$ and its even MASG's}
\label{sec:MASA}
There is a super Lie group denoted by $\SU(p,1|q)$ that is naturally associated to the super unit ball $\mathrm{B}^{p,q}$ and whose definition we now recall (see \cite{BKLR1}). The base manifold of $\SU(p,1|q)$ is the Lie group $\SU(p,1) \times \SU(q)$. For the structure sheaf we use the Grassmann algebra $\Lambda(M_{p+q+1}(\C))$, where $M_{p+q+1}(\C)$ is the space of complex square matrices of size $(p+q+1)\times(p+q+1)$, and we consider the tensor product
\[
    C^\infty(\SU(p,1)) \otimes \Lambda(M_{p+q+1}(\C)).
\]
The variables corresponding to the matrix entries are given the following parity assignments
$$
    p(\gamma_{jk}) = p(\overline{\gamma}_{jk}) =
    \left\{
    \begin{array}{ll}
        0, & \text{if }1 \leq j, k \leq p+ 1 \text{ or }p+1 < j, k \leq p+q+ 1, \\
        1, & \text{otherwise}.
    \end{array}
        \right.
$$
Thus we have that the super-matrix has a  natural block  decomposition as follows
$$
\gamma =
    \begin{pmatrix}
        A & C \\
        D & B
    \end{pmatrix},
$$
where $A$ and $B$ are even square matrices with sizes $(p+1)\times(p+1)$ and $q\times q$, respectively, and $C$ and $D$ are odd matrices with sizes $q \times (p+1)$ and $(p+1)\times q$, respectively. The structure sheaf of $\SU(p,1|q)$ is obtained by considering the set of matrices $\gamma$ as above that satisfy
$$ \text{Ber}\gamma = 1, \quad \gamma^* J_{p,1|q} \gamma = J_{p,1|q}, $$
where
$$
J_{p,1|q}=\begin{pmatrix}
    I_p &0 &0 \\
     0&-1 & 0\\
     0&0 & -I_q
    \end{pmatrix}.
$$
In particular, the Lie super algebra $\su(p,1|q)$ of $\SU(p,1|q)$ is given by the set of matrices $\gamma$ that satisfy the conditions
\[
    \str(\gamma) = 0, \quad \gamma^* J_{p,1|q} + J_{p,1|q} \gamma^* = 0,
\]
where $\str(\gamma)=\tr(A)-\tr(B)$ and $\tr(A)$ are the usual supertrace and trace, respectively.
Since the parity of the entries for such matrices is defined as above, we conclude that the Lie algebra of even elements of $\su(p,1|q)$ is given by
\[
    \su(p,1|q)_0 =
    \left\{
        \begin{pmatrix*}
            A & 0 \\
            0 & B
        \end{pmatrix*} : A \in \uni(p,1), B \in \uni(q), \tr(A) = \tr(B)
    \right\}.
\]

On the other hand, the super Siegel domain realization $\mathbb{U}^{p|q}$ of the super unit ball $\mathbb{B}^{p|q}$ together with the super Cayley transform introduced before yield another realization of the super Lie group $\SU(p,1|q)$. More precisely, we obtain the super Lie group $\SU(K_{p|q})$ whose base Lie group is
\[
    \SU(K_p) \times \SU(q),
\]
where $\SU(K_p)$ is the Lie group of unitary transformation of the pseudo-Hermitian form on $\C^{p+1}$ whose matrix is the following (see \cite{QV3} for comparison)
\[
    K_p =
    \begin{pmatrix*}
        2I_{p-1} & 0 & 0 \\
        0 & 0 & -i \\
        0 & i & 0
    \end{pmatrix*}.
\]
The structure sheaf of $\SU(K_{p|q})$ is given by the set of matrices $\gamma$ as above that now satisfy
\[
    \text{Ber}(\gamma) = 1, \quad \gamma^* K_{p|q} \gamma = K_{p|q},
\]
where
\[
    K_{p|q} =
    \begin{pmatrix*}
        K_p & 0 \\
        0 & -I_q
    \end{pmatrix*}.
\]
In particular, the super Lie algebra of $\SU(K_{p|q})$, denoted by $\su(K_{p|q})$, has the following Lie algebra as space of even elements
\[
    \su(K_p|q)_0 =
    \left\{
        \begin{pmatrix*}
            A & 0 \\
            0 & B
        \end{pmatrix*} : A \in \uni(K_p), B \in \uni(q), \tr(A) = \tr(B)
    \right\}.
\]
Here, we have used the notation where $\uni(K_p)$ denotes the Lie algebra of the Lie group $\mathrm{U}(K_p)$ of unitary transformations for the pseudo-Hermitian product with matrix $K_p$.

We note that $\SU(p,1|q)$, $\SU(K_{p|q})$ and their Lie algebras are conjugated through the super Cayley transform considered before. At the base manifold level this implies that $\su(p,1)$ and $\su(K_p)$ are conjugate.

\begin{defn}
    An even maximal Abelian subalgebra of $\su(p,1|q)$, or an even MASA for short, is a maximal Abelian subalgebra of the Lie algebra $\su(p,1|q)_0$. An even maximal Abelian subgroup of $\SU(p,1|q)$, or an even MASG, is a connected super Lie subgroup of $\SU(p,1|q)$ whose Lie super algebra is an even MASA.
\end{defn}

We now list the collection of all even MASA of $\su(p,1|q)$ up to conjugacy. Note that since $\su(p,1|q)$ and $\su(K_{p|q})$ are conjugated the even MASA of both super Lie algebras correspond to each other. In particular, the conjugacy classes of even MASA's can be described in terms of either one of these super Lie algebras. Also note that it follows from the above remarks that the maximal Abelian subalgebras of $\su(p,1)$ and $\su(K_p)$ correspond to each other as well.

\begin{thm}
    \label{thm:MASA}
    For every even MASA $\h$ of $\su(p,1|q)$ there exist maximal Abelian subalgebras $\h_1 \subset \uni(p,1)$ and $\h_2 \subset \uni(q)$ such that $\h$ is conjugate to the even MASA
    \[
        \h_1 \boxtimes_0 \h_2 =
        \left\{
            \begin{pmatrix*}
                A & 0 \\
                0 & B
            \end{pmatrix*} : A \in \h_1, B \in \h_2, \tr(A) = \tr(B)
        \right\}.
    \]
    Furthermore, we can assume that $\h_2$ is the Lie subalgebra of diagonal matrices in $\uni(q)$ and that $\h_1$ is given by one of the following where $D(k)$ denotes the Lie algebra of $k\times k$ diagonal matrices with pure imaginary entries.
    \begin{enumerate}
        \item Quasi-elliptic: The Lie subalgebra of diagonal matrices in $\uni(p,1)$.
        \item Quasi-parabolic: The Lie subalgebra of $\su(K_p)$ that consists of the matrices of the form
            \[
                \begin{pmatrix*}
                    D & 0 & 0 \\
                    0 & z & 0 \\
                    0 & 0 & \overline{z}
                \end{pmatrix*},
            \]
            where $D \in D(p-1)$, $z \in \C$ and $\tr(D) + 2i \im(z) = 0$.
        \item Quasi-hyperbolic: The Lie subalgebra of $\su(K_p)$ that consists of the matrices of the form
            \[
                \begin{pmatrix*}
                    D & 0 & 0 \\
                    0 & iy & a \\
                    0 & 0 & iy
                \end{pmatrix*},
            \]
            where $D \in D(p-1)$, $a,y \in \R$ and $\tr(D) + 2iy = 0$.
        \item Nilpotent: The Lie subalgebra of $\su(K_p)$ that consists of the matrices of the form
            \[
                \begin{pmatrix*}
                    0 & 0 & b^t \\
                    2ib & 0 & a \\
                    0 & 0 & 0
                \end{pmatrix*},
            \]
            where $a \in \R$ and $b \in \R^{p-1}$.
        \item Quasi-nilpotent: For some $k$ such that $1 \leq k \leq p-2$, the Lie subalgebra of $\su(K_p)$ that consists of the matrices of the form
            \[
                \begin{pmatrix*}
                    D & 0 & 0 & 0 \\
                    0 & iy I_{p-k-1} & 0 & b^t \\
                    0 & 2ib & iy & a \\
                    0 & 0 & 0 & iy
                \end{pmatrix*},
            \]
            where $a,y \in \R$, $b \in \R^{p-k-1}$, $D \in D(k)$ and $\tr(D) + iy(p-k+1) = 0$.
    \end{enumerate}
\end{thm}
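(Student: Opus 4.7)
The plan is to reduce the classification of even MASAs of $\su(p,1|q)$ to the separate classifications of MASAs in $\uni(p,1)$ and $\uni(q)$, and then invoke the known classification of MASAs in $\uni(p,1) \cong \uni(K_p)$ established in \cite{QV3} together with the standard compact-group fact that all MASAs of $\uni(q)$ are conjugate to the diagonal Cartan subalgebra.

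The first step is to exploit that $\su(p,1|q)_0$ sits inside $\uni(p,1) \oplus \uni(q)$ as the kernel of the linear functional $(A, B) \mapsto \tr(A) - \tr(B)$, with bracket inherited componentwise from the direct sum. Let $\pi_1, \pi_2$ denote the induced projections; because the bracket is componentwise, $\pi_i$ sends Abelian subalgebras to Abelian subalgebras. Given an even MASA $\h$, I would choose MASAs $\h_1 \subset \uni(p,1)$ and $\h_2 \subset \uni(q)$ containing $\pi_1(\h)$ and $\pi_2(\h)$ respectively. The subspace $\h_1 \boxtimes_0 \h_2$ defined in the statement is Abelian (being contained in the Abelian subspace $\h_1 \oplus \h_2$) and contains $\h$, so maximality of $\h$ forces $\h = \h_1 \boxtimes_0 \h_2$. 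In particular $\pi_i(\h) = \h_i$ are themselves MASAs in their respective factors, and this establishes the first assertion of the theorem.

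The second step is to normalize $\h_1$ and $\h_2$ independently by conjugation. Block-diagonal conjugation by $\mathrm{diag}(g_1, g_2)$ with $g_1 \in \mathrm{U}(p,1)$ and $g_2 \in \mathrm{U}(q)$ preserves both the block structure and the supertrace condition, so the two factors can be normalized separately. Compactness of $\mathrm{U}(q)$ yields a single conjugacy class of MASAs of $\uni(q)$, giving the standard diagonal representative for $\h_2$. After passing to $\uni(K_p)$ through the Cayley transform conjugation recalled earlier in this section, the classification of MASAs of $\uni(K_p)$ from \cite{QV3} produces exactly the five non-conjugate models named quasi-elliptic, quasi-parabolic, quasi-hyperbolic, nilpotent and quasi-nilpotent (the last with a discrete parameter $1 \leq k \leq p-2$). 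The internal linear relations $\tr(D) + 2i\im(z) = 0$, $\tr(D) + 2iy = 0$ and $\tr(D) + iy(p-k+1) = 0$ then follow by spelling out the supertrace constraint $\tr(A) = \tr(B)$ on the elements of $\h_1 \boxtimes_0 \h_2$ once $\h_2$ is taken to be the diagonal of $\uni(q)$.

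The main obstacle, largely delegated to \cite{QV3}, is the classical classification of MASAs of $\uni(K_p)$ in the non-compact case. This requires a case analysis based on the Jordan structure of a generic element $X \in \h_1$ with respect to the indefinite form $K_p$, distinguishing whether the generalized eigenspaces of $X$ sit entirely in a definite subspace, are transverse to the light cone, or are tangent to it, and whether $X$ is semisimple or carries nontrivial Jordan blocks along a null direction. The only additional care needed for the super setting is bookkeeping: one must check that the conjugations normalizing $\h_1$ and $\h_2$ can be performed simultaneously, which is immediate because they act on disjoint blocks of the ambient super matrix and therefore automatically preserve the supertrace constraint.
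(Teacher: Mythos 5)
Your proposal is correct and follows essentially the same route as the paper: project $\h$ onto the two factors, use maximality to conclude $\h = \h_1 \boxtimes_0 \h_2$ with $\h_1,\h_2$ MASAs, and then invoke the single conjugacy class of MASAs of $\uni(q)$ together with the classification of MASAs of $\su(p,1)\cong\su(K_p)$ from \cite{QV3}. The only cosmetic difference is that the paper first passes through the explicit isomorphism $\su(p,1|q)_0 \simeq \su(p,1) \times \uni(q)$, which removes the trace constraint before projecting, whereas you work directly with the trace-constrained subalgebra of $\uni(p,1)\oplus\uni(q)$; the two arguments are equivalent.
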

\begin{proof}
    Let $\h$ be an even MASA subalgebra of $\su(p,1|q)$. In particular, $\h$ is a MASA (maximal Abelian subalgebra) of $\su(p,1|q)_0$. We first note that
    \[
        \su(p,1|q)_0 \simeq \su(p,1) \times \uni(q)
    \]
    as Lie algebras where the isomorphism is given by the assignment
    \[
        \begin{pmatrix*}
            A & 0 \\
            0 & B
        \end{pmatrix*}
            \mapsto
                \left(A - \frac{1}{p+1} \tr(A) I_{p+1}, B \right).
    \]
    And so, we can consider $\h$ as a MASA of $\su(p,1) \times \uni(q)$.

    Let $\h_1$ and $\h_2$ be the projections of $\h$ into the first and second factors, respectively. Hence, $\h_1$ and $\h_2$ are both Abelian Lie algebras. It is clear that $\h \subset \h_1 \times \h_2$ and the maximality of $\h$ implies that $\h = \h_1 \times \h_2$. Furthermore, the same argument shows that $\h_1$ and $\h_2$ are MASA's of $\su(p,1)$ and $\uni(q)$, respectively. It is well known that there is a single conjugacy class of MASA's of $\uni(q)$ with a representative given by $D(q)$. The conjugacy classes of MASA's of $\su(p,1)$ are also known and they are listed in \cite{QV3}. From this and the above isomorphism of Lie algebras the result now follows directly. It is also important to note that the MASA's of $\uni(p,1)$ are of the form $\h_1 \times \R$ where $\h_1$ is a MASA of $\su(p,1)$.
\end{proof}

After exponentiating the even MASA's listed above we obtain the following even MASG's viewed through their actions on either $\mathbb{B}^{p|q}$ or $\mathbb{U}^{p|q}$. The content of Theorem~\ref{thm:MASA} is that, up to conjugacy, these are the only MASG's of $\SU(p,1|q)$.

      \textbf{Quasi-elliptic} group of super biholomorphisms of the super unit ball $\mathbb{B}^{p|q}$ is isomorphic to
$\mathbb{T}^{p+q}$ with the following group action:
                $$(t,s) :(z, \theta)\in \mathbb{B}^{p|q}\mapsto (tz,s\theta) = (t_1z_1 ,\ldots, t_p z_p, s_{1} \theta_1,\ldots,s_{q}\theta_q )\in \mathbb{B}^{p|q},$$
for each $t = (t_1 ,\ldots, t_{p},s_1,\ldots,s_q )$.

Note that if the super function $F$ is invariant under the action of the  quasi-elliptic group, then
$$F(z,\xi)=\sum_{I\subset Q} f_I(r)\xi_I \xi_I^*,$$
where $r = (r_1 , ..., r_p ) = (|z_1 |, \ldots, |z_p |)$.

 \textbf{Quasi-parabolic} group of biholomorphisms of the super Siegel domain $\mathbb{U}^{p|q}$ is isomorphic to $\mathbb{T}^{p-1}\times\mathbb{R}\times \mathbb{T}^{q}$ with the following group action:
                     $$  (t, h,s) : (z' , z_p,\theta ) \in \mathbb{U}^{p|q} \mapsto (tz' , z_p + h,s\theta) \in \mathbb{U}^{p|q},$$
for each $(t, h,s) \in \mathbb{T}^{p-1}\times\mathbb{R} \times\mathbb{T}^{q}$.

In this case, if the super function $F$ is invariant under the action of the  quasi-parabolic group, then
$$F(z,\xi)=\sum_{I\subset Q} f_I(r', \text{Im }(z_n))\xi_I \xi_I^*,$$
where $r' = (r_1 , ..., r_{p-1} ) = (|z_1 |, \ldots, |z_{p-1} |)$.

\textbf{Quasi-hyperbolic} group of biholomorphisms of the Siegel domain $\mathbb{U}^{p|q}$ is isomorphic to $\mathbb{T}^{p-1}\times\mathbb{R}_+\times \mathbb{T}^{q}$ with the following group action:
                     $$ (t, r,s) : (z' , z_p,\theta ) \in \mathbb{U}^{p|q} \mapsto(r^{1/2} t z' , r z_p, r^{1/2} s \theta ) \in \mathbb{U}^{p|q},$$
for each $(t, r,s) \in\mathbb{T}^{p-1}\times\mathbb{R}_+\times \mathbb{T}^{q}$.

We now have that, if the super function $F$ is invariant under the action of the  quasi-hyperbolic group, then
$$F(z,\xi)=\sum_{I\subset Q} f_I\left(\rho_1,\ldots,\rho_{n-1} , \arg(z_n - i|z'|^2)\right) |z_n - i|z'|^2|^{-|I|}  \xi_I \xi_I^*,$$
where  $z'=(z_{1},...,z_{p-1})$ and $$\rho_k=\frac{|z_k|}{\sqrt{|z'|^2 + |z_n - i|z'|^2|}},$$ for $k=1,\ldots,n-1.$

\textbf{Nilpotent} group of biholomorphisms of the Siegel domain $\mathbb{U}^{p|q}$ is isomorphic
to $\mathbb{R}^{p-1}\times\mathbb{R}\times\mathbb{T}^{q}$ with the following group action:
           $$(b, h,s) : (z' , z_p,\theta )\in  \mathbb{U}^{p|q} \mapsto (z' + b, z_p + h + 2iz' \cdot b + i|b|^2 , s\theta)\in \mathbb{U}^{p|q},$$
for each $(b, h,s)\in \mathbb{R}^{p-1}\times\mathbb{R}\times \mathbb{T}^{q}$.

In this case, we have that if the super function $F$ is invariant under the action of the  nilpotent group, then
$$F(z,\xi)=\sum_{I\subset Q} f_I(\Ima z',\Ima z_n - |z'|^2) \xi_I \xi_I^*,$$
where $ z'=(z_1 , ..., z_{p-1} )$.

\textbf{Quasi-nilpotent} group of biholomorphisms of the Siegel domain $\mathbb{U}^{p|q}$ is isomorphic to $\mathbb{T}^k\times\mathbb{R}^{p-k-1}\times\mathbb{R}\times \mathbb{T}^{q}$ where  $0 < k < p - 1$, with the following group action:
$$(t,b, h,s) : (z',z'' , z_p,\theta )\in  \mathbb{U}^{p|q} \mapsto (tz',z'' + b, z_p + h + 2iz' \cdot b + i|b|^2 , s\theta)\in \mathbb{U}^{p|q} ,$$
for each $(t, b, h,s) \in\mathbb{T}^k\times\mathbb{R}^{p-k-1}\times\mathbb{R}\times \mathbb{T}^{q}$.

And for this case, we have that if the super function $F$ is invariant under the action of the  quasi-nilpotent group, then
$$F(z,\xi)=\sum_{I\subset Q} f_I(r, y',\Ima z_n - |z'|^2) \xi_I \xi_I^*,$$
where   $r=(|z_1|,...,|z_k|)$, $y' = \Ima w'$, and $w'=(z_{k+1},...,z_{p-1})$.


\section{Super Bargmann Transform}
\label{sec:superBargmann}

In \cite{QV2} it was introduced a Bargmann type transform for each of the five cases on the unit ball.
These Bargmann transforms are used in \cite{QV2} to provide very useful descriptions of the Bergman spaces in terms of coordinates corresponding to the actions of maximal abelian subgroups.
In this section we define Bargmann type transforms corresponding to the even MASA's considered above. These transforms are natural analogues of those defined in \cite{QV2}

\subsection{Quasi-elliptic case}

Denote by  $\tau (\mathbb{B}^p )$ the base of the unit ball $\mathbb{B}^p$, considered as a Reinhardt
domain. In other words, we have
     $$ \tau (\mathbb{B}^p ) = \{r = (r_1 , ..., r_p ) = (|z_1 |, \ldots, |z_p |) : r^2 = r_1^2 + \cdots + r_n^2 \in [0, 1)\},$$
which is contained in $\mathbb{R}^p_+$.  Consider in $\mathbb{C}^p$ the polar coordinates
$z_k = t_k r_k$, where $r_k\in \mathbb{R}_+ $ and $t_k \in \mathbb{T}$, for $k = 1,\ldots, p$.
Then, with respect to the identification
                       $$ z = (z_1 ,\ldots, z_p ) = (t_1 r_1 , \ldots, t_p r_p ) = (t, r),$$
where $t = (t_1 , ..., t_p ) \in\mathbb{T}^p, r = (r_1 ,\ldots, r_p ) \in  \tau (\mathbb{B}^p )$, we have
$\mathbb{B}^p  = \mathbb{T}^p \times   \tau (\mathbb{B}^p )$, and
                      $$   L_2 ( \mathbb{B}^p, \mu_{\nu}) = L_2 (\mathbb{T}^p ) \otimes L_2 ( \tau(\mathbb{B}^p), \mu_{\nu}),$$
where
$$L_2 (\mathbb{T}^p )= \bigotimes_{k=1}^p L_2 \left(\mathbb{T},\frac{d t_k}{i t_k} \right),$$
and the measure $d\mu_{\nu}(r)$ in $L_2 ( \tau(\mathbb{B}^p), \mu_{\nu})$ is given by
    $$d\mu_{\nu}(r)=\frac{\Gamma(\nu)}{\pi^p\Gamma(\nu-p)}(1-r^2)^{\nu-p-1}r dr.$$
We denote the discrete Fourier transform $ \mathcal{F} : L_2 (\mathbb{T} )\mapsto  l_2 = l_2 (\mathbb{Z})$ by
$$\mathcal{F}: f\mapsto c_n= \frac{1}{\sqrt{2\pi}}\int_{\mathbb{T}} f(t)t^{-n} \frac{dt}{it}.$$
Of course, the operator $\mathcal{F}$ is unitary and
$$\mathcal{F}^{-1}=\mathcal{F}^*: \{ c_n\}_{n\in \mathbb{Z}}\mapsto f=\frac{1}{\sqrt{2\pi}} \sum_{n\in\mathbb{Z}} c_n t^n.$$
We consider the operator
     $$U = \mathcal{F}_{(p)}\otimes  I : L_2 (\mathbb{T}^p ) \otimes L_2 ( \mathbb{B}^p, \mu_{\nu})\mapsto l_2 (\mathbb{Z}^p ) \otimes L_2 ( \mathbb{B}^p, \mu_{\nu}),$$
where $\mathcal{F}_{(p)}=\mathcal{F}\otimes\cdots \otimes \mathcal{F}$.
As in \cite{QV2}, we use the isometric embedding
$$R_{0,\nu} : l_2 (\mathbb{Z}^p_+ )\longrightarrow  l_2 (\mathbb{Z}^p ) \otimes L_2 ( \mathbb{B}^p, \mu_{\nu}),$$
defined by
$$R_{0,\nu}:\{ c_n \}_{n\in \mathbb{Z}^p_+}\mapsto c_n(r)= \left\{
\begin{array}{ll}
\left(\frac{(2\pi)^p \Gamma(|n|+\nu)}{n! \Gamma(\nu)}\right)^{1/2} c_n r^n, & \text{ for } n\in \mathbb{Z}^p_+\\
 0, & \text{ for } n\in \mathbb{Z}^p \setminus \mathbb{Z}^p_+
 \end{array}. \right.$$
Hence, it is easily seen that the map
 $$R_{0,\nu}^* :  l_2 (\mathbb{Z}^p ) \otimes L_2 ( \mathbb{B}^p, \mu_{\nu}) \longrightarrow  l_2 (\mathbb{Z}^p_+ ) $$
satisfies
$$R^*_{0,\nu}:\{ f_n(r) \}_{n\in \mathbb{Z}^p_+}\mapsto  \left\{
\left(\frac{(2\pi)^p \Gamma(|n|+\nu)}{n! \Gamma(\nu)}\right)^{1/2} \int_{ \tau(\mathbb{B}^p)} f_n(r) r^n d\mu_{\nu}  \right\}_{\mathbb{Z}^p_+}.$$

In \cite{QV2}  the authors introduced
the operator $R_{\nu} = R^*_{0,\nu} U$  from $L_2 ( \mathbb{B}^p, \mu_{\nu})$ onto $l_2 (\mathbb{Z}^p_+ )$ and the adjoint operator $ R^*_{\nu} = U^*  R_{0,\nu}$
 from $ l_2 (\mathbb{Z}^p_+ )$ onto  $H^2_{\nu}(\mathbb{B}^p )$.  They proved that $ R^*_{\nu}$ is the isometric isomorphism from $l_2 (\mathbb{Z}^p_+ )$ onto the subspace $H^2_{\nu}(\mathbb{B}^p )$.

 Furthermore, we have
\begin{align}\label{BT-QE}
 R_{\nu}R^*_{\nu}=I & : l_2 (\mathbb{Z}^p_+)\rightarrow l_2 (\mathbb{Z}^p_+ ),\\ \nonumber
R^*_{\nu}R_{\nu}=P_{\mathbb{B}^{p },\nu}&: L_2 ( \mathbb{B}^p, \mu_{\nu}) \rightarrow H^2_{\nu}(\mathbb{B}^p ),
\end{align}
where $P_{\mathbb{B}^{p },\nu}$ is the Bergman projection.

The inner product in $(l_2(\mathbb{Z}_+^p))^{2^q}$ is given by
\begin{align*}
    &\langle \{  a_{M,n}  \}_{n\in \mathbb{Z}^p_+})_{M\subset Q},(\{  b_{M,n}  \}_{n\in \mathbb{Z}^p_+})_{M\subset Q} \rangle \\
    &=  \sum_{M\subset Q} \frac{\Gamma(\nu)}{\Gamma(\nu+|M|)} \left(\sum_{n\in \mathbb{Z}^p_+} a_{M,n} \overline{b_{M,n}}\right).
\end{align*}

\begin{defn}\label{SBT-QE}
 Consider the operator
$$R_{\nu,(p|q)}: H^2_{\nu}(\mathbb{B}^{p|q })\rightarrow (l_2(\mathbb{Z}_+^p))^{2^q},$$
defined by $$R_{\nu,(p|q)}\left( \sum_{M\subset Q} \psi_M \xi_M \right)=
\left(   R_{\nu+|M|}(\psi_M)\right)_{M\subset Q},$$ whose explicit expression is given by
\begin{align*}
 &R_{\nu,(p|q)}\left( \sum_{M\subset Q} \psi_M \xi_M \right)\\
&=
\left( \left\{  (2\pi)^{-\frac{p}{2}}
\left(      \frac{(2\pi)^p \Gamma(|n|+\nu+|M|)}{n! \Gamma(\nu+|M|)}\right)^{1/2}
\int_{ \mathbb{B}^p} \psi_M \overline{z^n} d\mu_{\nu+|M|}(z)  \right\}_{\mathbb{Z}^p_+} \right)_{M\subset Q}.
\end{align*}
The adjoint operator
$$R_{\nu,(p|q)}^*: (l_2(\mathbb{Z}_+^p))^{2^q}\rightarrow H^2_{\nu}(\mathbb{B}^{p|q })$$
is defined by
$$R_{\nu,(p|q)}^* ((\{  c_{M,n}  \}_{n\in \mathbb{Z}^p_+})_{M\subset Q})= \sum_{M\subset Q} R^*_{\nu+|M|}(\{  c_{M,n}  \}_{n\in \mathbb{Z}^p_+}) \xi_M, $$
whose explicit expression is now
\begin{align*}
 &R_{\nu,(p|q)}^*((\{  c_{M,n}  \}_{n\in \mathbb{Z}^p_+})_{M\subset Q})\\
&=\sum_{M\subset Q} \left((2\pi)^{-\frac{p}{2}} \sum_{n\in \mathbb{Z}^p_+} \left(      \frac{(2\pi)^p \Gamma(|n|+\nu+|M|)}{n! \Gamma(\nu)}\right)^{1/2} c_{M,n} z^n \ \  \xi_M \right).
 \end{align*}
\end{defn}

\begin{thm}\label{TCS-QE}
The operators  $$R_{\nu,(p|q)}: H^2_{\nu}(\mathbb{B}^{p|q })\rightarrow (l_2(\mathbb{Z}_+^p))^{2^q}$$ and
$$R_{\nu,(p|q)}^*: (l_2(\mathbb{Z}_+^p))^{2^q}\rightarrow H^2_{\nu}(\mathbb{B}^{p|q })$$
are isometric isomorphisms.
Furthermore, we have
\begin{align*}
 R_{\nu,(p|q)}R^*_{\nu,(p|q)}=I & : (l_2(\mathbb{Z}_+^p))^{2^q}\rightarrow (l_2(\mathbb{Z}_+^p))^{2^q},\\
R^*_{\nu,(p|q)}R_{\nu,(p|q)}=I &: H^2_{\nu}(\mathbb{B}^{p|q} ) \rightarrow H^2_{\nu}(\mathbb{B}^{p|q} ).
\end{align*}
\end{thm}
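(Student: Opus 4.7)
The plan is to reduce the super statement to the classical result (\ref{BT-QE}) applied to each weight-shifted Bergman space, using the orthogonal decomposition
\[
    H^2_{\nu}(\mathbb{B}^{p|q}) = \bigoplus_{M \subset Q} H^2_{\nu+|M|}(\mathbb{B}^p) \otimes \C\,\xi_M
\]
together with the inner product identity (\ref{bergman-descoposition})
\[
    (\Psi|\Psi)_{\mathbb{B}^{p|q},\nu} = \sum_{M \subset Q} \frac{\Gamma(\nu)}{\Gamma(\nu+|M|)} \|\psi_M\|^2_{\mathbb{B}^p,\nu+|M|}
\]
for $\Psi = \sum_{M \subset Q} \psi_M \xi_M$.

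First I would observe that the target Hilbert space $(l_2(\mathbb{Z}_+^p))^{2^q}$ has been equipped with the inner product carrying exactly the same weights $\Gamma(\nu)/\Gamma(\nu+|M|)$, so the map
\[
    R_{\nu,(p|q)} = \bigoplus_{M \subset Q} R_{\nu+|M|} \otimes \mathrm{id}_{\C \xi_M}
\]
becomes a block-diagonal operator whose blocks are the classical Bargmann transforms $R_{\nu+|M|} : H^2_{\nu+|M|}(\mathbb{B}^p) \to l_2(\mathbb{Z}_+^p)$. The adjoint $R^*_{\nu,(p|q)}$ is then, likewise, the corresponding block-diagonal sum of the classical adjoints $R^*_{\nu+|M|}$, and the explicit formula in Definition~\ref{SBT-QE} is consistent with this identification once the weights on both sides are tracked.

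Next, I would invoke the classical identities (\ref{BT-QE}): for each $M \subset Q$,
\[
    R_{\nu+|M|} R^*_{\nu+|M|} = I_{l_2(\mathbb{Z}_+^p)}, \qquad R^*_{\nu+|M|} R_{\nu+|M|} = P_{\mathbb{B}^p,\nu+|M|}.
\]
Since $P_{\mathbb{B}^p,\nu+|M|}$ restricted to $H^2_{\nu+|M|}(\mathbb{B}^p)$ is the identity, the block-diagonal assembly of these identities yields both
\[
    R_{\nu,(p|q)} R^*_{\nu,(p|q)} = I_{(l_2(\mathbb{Z}_+^p))^{2^q}}, \qquad
    R^*_{\nu,(p|q)} R_{\nu,(p|q)} = I_{H^2_{\nu}(\mathbb{B}^{p|q})}.
\]

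The only nontrivial bookkeeping — and the main potential pitfall — is to check the weight matching: the super norm squared picks up the factor $\Gamma(\nu)/\Gamma(\nu+|M|)$ on the $M$-th summand, while the classical transform $R_{\nu+|M|}$ is an isometry with respect to the $(\nu+|M|)$-weighted measure $d\mu_{\nu+|M|}$. Writing out the integral in Definition~\ref{SBT-QE} confirms that the normalizing coefficient $(2\pi)^{-p/2}\bigl((2\pi)^p\Gamma(|n|+\nu+|M|)/(n!\Gamma(\nu+|M|))\bigr)^{1/2}$ is precisely the one coming from the classical $R_{\nu+|M|}$ applied to $\psi_M$, so the isometric property transfers term by term and summing over $M$ gives the full isometry. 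From this, the isometric isomorphism assertions and the two compositional identities follow at once.
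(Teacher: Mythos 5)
Your proposal is correct and follows essentially the same route as the paper: both reduce the super statement to the classical identities (\ref{BT-QE}) for each $R_{\nu+|M|}$ by exploiting the orthogonal decomposition (\ref{bergman-descoposition}) and the matching weights $\Gamma(\nu)/\Gamma(\nu+|M|)$ in the inner product on $(l_2(\mathbb{Z}_+^p))^{2^q}$. Your block-diagonal phrasing and explicit assembly of both compositional identities is, if anything, slightly more complete than the paper's computation, which only verifies the isometry of $R^*_{\nu,(p|q)}$ on a general element.
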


\begin{proof}
First, we will prove that  $R_{\nu,(p|q)}$ is a unitary operator.
We first take an element $(\{  c_{M,n}  \}_{n\in \mathbb{Z}^p_+})_{M\subset Q})$  in $(l_2(\mathbb{Z}_+^p))^{2^q}$, then
\begin{align*}
I&=  \left\|R_{\nu,(p|q)}^*((\{  c_{M,n}  \}_{n\in \mathbb{Z}^p_+})_{M\subset Q}) \right\|^2_{\mathbb{B}^{p|q},\nu}\\
&= \left( \sum_{M\subset Q} R_{\nu+|M|}(\{  c_{M,n}  \}_{n\in \mathbb{Z}^p_+}) \xi_M,\sum_{M\subset Q} R_{\nu+|M|}(\{  c_{M,n}  \}_{n\in \mathbb{Z}^p_+}) \xi_M    \right)_{\mathbb{B}^{p|q},\nu}\\
&=\sum_{m=1}^q    \frac{\Gamma(\nu)}{\Gamma(\nu+m)}   \sum_{M\subset Q, |M|=m}  \| R_{\nu+|M|}(\{  c_{M,n}  \}_{n\in \mathbb{Z}^p_+}) \|^2_{\mathbb{U}^{p},\nu+m}.
\end{align*}
By (\ref{BT-QE}) we know that $R_{\nu}$ is an isometric isomorphism, then
\[
I= \sum_{M\subset Q} \frac{\Gamma(\nu)}{\Gamma(\nu+|M|)}  \sum_{n\in \mathbb{Z}^p_+} |c_{M,n} |^2=
\left\|(\{  c_{M,n}  \}_{n\in \mathbb{Z}^p_+})_{M\subset Q}\right\|^2_{(l_2(\mathbb{Z}_+^p))^{2^q}},
\]
which proves our claim.
\end{proof}

\subsection{Quasi-parabolic case}

We define  $\mathbb{D}^p = \mathbb{C}^{p-1} \times\mathbb{R}\times \mathbb{R}_+$ whose points we denote by $(z',u,v)$.
Consider the space $L_2(\mathbb{D}^p,\eta_{\nu})$, where $\eta_\nu$ is given by
$$\eta_\nu(z',u,v)=\frac{\Gamma(\nu)}{4\pi^p \Gamma(\nu-p)} v^{\nu-p-1}.$$
Consider the operator $U_0:L_2(\mathbb{B}^p,\mu_{\nu})\longrightarrow L_2(\mathbb{D}^p,\eta_{\nu})$ defined by
$$U_0(f)(z',u,v)=f(\kappa(z',u,v)),$$
where $\kappa(z' , u, v) = (z' , u + iv + i|w'|)$.
It is clear that the operator is unitary and the inverse operator is given by
$$U_0^{-1}(f)(z)=f(\kappa^{-1}(z)).$$

We represent the space $L_2 (\mathbb{D}^p, \eta_\nu )$ as the following tensor product
$$L_2 (\mathbb{D}^p, \eta_\nu ) = L_2 (\mathbb{C}^{p-1} ) \otimes L_2 (\mathbb{R})\otimes L_2 (\mathbb{R}_+ , \eta_\nu ),$$
and consider the unitary operator $U_1 = I \otimes F \otimes I$ acting on it. Here
$F$ is the standard Fourier transform on $L_2 (\mathbb{R})$ and $f(z',u,v)\mapsto U_1f(z',\xi,v)$.

On the other hand, we have the decomposition $$L_2 (\mathbb{D}^p, \eta_\nu ) = L_2 (\mathbb{R}_+^{p-1} ) \otimes L_2 (\mathbb{T}^{p-1} ) \otimes L_2 (\mathbb{R})
\otimes L_2 (\mathbb{R}_+ , \eta_\nu ),$$
where
$$L_2 (\mathbb{T}^{p-1} )= \bigotimes_{k=1}^{p-1}L_2 \left(\mathbb{T},\frac{d t_k}{i t_k}  \right).$$
As before, we consider the discrete Fourier transform $ \mathcal{F} : L_2 (\mathbb{T}^p )\mapsto  l_2 = l_2 (\mathbb{Z})$, and consider the unitary operator $U_2 = I \otimes \mathcal{F}_{(p-1)} \otimes I\otimes I$ acting from
 $$L_2 (\mathbb{R}_+^{p-1} ) \otimes L_2 (\mathbb{T}^{p-1} ) \otimes L_2 (\mathbb{R})\otimes L_2 (\mathbb{R}_+ , \eta_\nu )$$
onto
 $$L_2 (\mathbb{R}_+^{p-1} ) \otimes l_2 (\mathbb{Z}^{p-1} ) \otimes L_2 (\mathbb{R})\otimes L_2 (\mathbb{R}_+ , \eta_\nu )$$
$$=l_2 (\mathbb{Z}^{p-1}, L_2 (\mathbb{R}_+^{p-1} )   \otimes L_2 (\mathbb{R})\otimes L_2 (\mathbb{R}_+ , \eta_\nu ),$$
where $\mathcal{F}_{(p-1)} = \mathcal{F} \otimes \cdots\otimes\mathcal{F}$ and $
f(z',\xi,v)\mapsto \{U_2 f(r,n,\xi,v)\}_{n\in \mathbb{Z}^{p-1}}$

We now the consider isometric embedding
$$R_{\nu,0} :l_2 (\mathbb{Z}^{p-1}_+, L_2 (\mathbb{R}_+ ))\rightarrow
 l_2 (\mathbb{Z}^{p-1}, L_2 (\mathbb{R}_+^{p-1} )   \otimes L_2 (\mathbb{R})\otimes L_2 (\mathbb{R}_+ , \eta_\nu ))$$
defined by the assignment
\begin{align*}
 &R_{\nu,0} \{b_n (\xi)\}_{n\in \mathbb{Z}^{p-1}_+}\\
 &= \left\{ \chi_{\mathbb{Z}^{p-1}_+}(n) \chi_{\mathbb{R}_+(\xi)  }
\left(
 \frac{ 2(2\pi)^{p}(2 \xi)^{|n|+\nu-1 }}{n! \Gamma(\nu)}\right)^{\frac{1}{2}}
r^n e^{−\xi(|z'|^2+v)}b_n (\xi)
\right\}_{n\in \mathbb{Z}^{p-1}_+},
\end{align*}
where the function $b_n (\xi)$ is extended by zero for $\xi \in \mathbb{R} \setminus \mathbb{R}_+$ and $n\in\mathbb{Z}^{p-1} \setminus\mathbb{Z}^{p-1}_+ $.
Its adjoint
$$R_{\nu,0}^* : l_2 (\mathbb{Z}^{p-1}, L_2 (\mathbb{R}_+^{p-1} )
\otimes L_2 (\mathbb{R})\otimes L_2 (\mathbb{R}_+ , \eta_\nu ))\rightarrow l_2 (\mathbb{Z}^{p-1}_+, L_2 (\mathbb{R}_+ ))$$
is given by the expression
\begin{align*}
& R_{\nu,0}^*: \{d_n (r,\xi,v)\}_{n\in \mathbb{Z}^{p-1}} = \\
&\left\{  \chi_{\mathbb{R}_+(\xi)  }
\left(
 \frac{ 2(2\pi)^{p}(2 \xi)^{|n|+\nu-1 }}{n! \Gamma(\nu)}\right)^{\frac{1}{2}}
\int_{\mathbb{R}_+^n}
r^n e^{−\xi(|z'|^2+v)}d_n (r,\xi,v)rdr \frac{c_{\nu}v^{\nu-p-1}}{4}dv
\right\}_{n\in \mathbb{Z}^{p-1}_+}.
\end{align*}

In \cite{QV2}  the authors introduced the  operator
 $R_{\nu} = R^*_{0,\nu} U$ from $L_2 (\mathbb{U}^{p} , \mu_\nu )$ onto $l_2(\mathbb{Z}_+^{p-1}, L_2(\mathbb{R}_+))$,
 and the adjoint operator $ R^*_{\nu} = U^*  R_{0,\nu}$ from
  $l_2(\mathbb{Z}_+^{p-1}, L_2(\mathbb{R}_+))$ onto
  $H^2_{\nu}(\mathbb{U}^p )$ where $U=U_2U_1U_0$. They proved that $ R^*_{\nu}$ is the isometric isomorphism of $l_2(\mathbb{Z}_+^{p-1}, L_2(\mathbb{R}_+))$ onto the subspace $H^2_{\nu}(\mathbb{U}^p )$.
Furthermore, we have
\begin{align*}
 R_{\nu}R^*_{\nu}=I & : l_2(\mathbb{Z}_+^{p-1}, L_2(\mathbb{R}_+))\rightarrow l_2(\mathbb{Z}_+^{p-1}, L_2(\mathbb{R}_+))\\
R^*_{\nu}R_{\nu}=P_{\mathbb{U}^{p },\nu}&: L_2 ( \mathbb{U}^p, \mu_{\nu}) \rightarrow H^2_{\nu}(\mathbb{U}^p ),
\end{align*}
where $P_{\mathbb{U}^{p },\nu}$ is the Bergman projection of $L_2 ( \mathbb{U}^p, \mu_{\nu})$ to $  H^2_{\nu}(\mathbb{U}^p ).$

The inner product in $(l_2(\mathbb{Z}_+^{p-1}, L_2(\mathbb{R}_+)) )^{2^q}$ is given by
\begin{align*}
&  \langle( \{  a_{M,n}(\xi)  \}_{n\in \mathbb{Z}^{p-1}_+})_{M\subset Q},(\{  b_{M,n} (\xi) \}_{n\in \mathbb{Z}^{p-1}_+})_{M\subset Q} \rangle\\
&=\sum_{M\subset Q} \frac{\Gamma(\nu)}{\Gamma(\nu+|M|)} \left(\sum_{n\in \mathbb{Z}^{p-1}_+} ((a_{M,n}(\xi),b_{M,n}(\xi))_{L_2(\mathbb{R}_+)} \right).
\end{align*}

\begin{defn}\label{SBT-QP}
 Consider the operator
$$R_{\nu,(p|q)}: H^2_{\nu}(\mathbb{U}^{p|q })\rightarrow (l_2(\mathbb{Z}_+^{p-1}, L_2(\mathbb{R}_+)) )^{2^q}$$
defined by $$R_{\nu,(p|q)}\left( \sum_{M\subset Q} \psi_M \xi_M \right)=
\left(   R_{\nu+|M|}(\psi_M)\right)_{M\subset Q},$$
whose explicit expression is given by
\begin{align*}
 & R_{\nu,(p|q)}\left( \sum_{M\subset Q} \psi_M \xi_M \right)\\
&=\left(
\left\{
(2\pi)^{-\frac{p}{2}} \left(
 \frac{2(2\pi)^{p} (2 \xi)^{|n|+\nu+|M|-1 }}{n! \Gamma(\nu+|M|)}\right)^{\frac{1}{2}}
\right.
\right.\\
&\times  \left. \left.
\int_{\mathbb{U}^p}\psi_M(z) (\bar{z'})^n e^{-i\xi z_n}(\text{Im}z_n-|z'|)^{\nu+|M|-p-1} dv(z)
\right\}_{\mathbb{Z}^{p-1}_+}
\right)_{M\subset Q}.
\end{align*}
Its adjoint operator is
$$R_{\nu,(p|q)}^*: (l_2(\mathbb{Z}_+^{p-1}, L_2(\mathbb{R}_+)) )^{2^q}\rightarrow H^2_{\nu}(\mathbb{U}^{p|q })$$
defined by $$R_{\nu,(p|q)}^* ((\{  b_{M,n}(\xi)  \}_{n\in \mathbb{Z}^{p-1}_+})_{M\subset Q})= \sum_{M\subset Q} R_{\nu+|M|}(\{  b_{M,n}(\xi)  \}_{n\in \mathbb{Z}^{p-1}_+}) \xi_M, $$
whose explicit expression is now given by
\begin{align*}
 & R_{\nu,(p|q)}^*((\{  b_{M,n}(\xi)  \}_{n\in \mathbb{Z}^{p-1}_+})_{M\subset Q})\\
&=\sum_{M\subset Q}
 \left(
 (2\pi)^{-\frac{p}{2}}
\sum_{n\in\mathbb{Z}^{p-1}_+}
\int_{\mathbb{R}_+}  \left(
 \frac{2(2\pi)^{p} (2 \xi)^{|n|+\nu+|M|-1 }}{n! \Gamma(\nu+|M|)}\right)^{\frac{1}{2}} b_{n,M}(\xi) (z')^n e^{i\xi z_n}d\xi
 \right) \xi_M.
\end{align*}

\end{defn}

\begin{thm}
The operators  $$R_{\nu,(p|q)}: H^2_{\nu}(\mathbb{U}^{p|q })\rightarrow (l_2(\mathbb{Z}_+^{p-1}, L_2(\mathbb{R}_+)) )^{2^q}$$ \\ and
 $$R_{\nu,(p|q)}^*: (l_2(\mathbb{Z}_+^{p-1}, L_2(\mathbb{R}_+)) )^{2^q}\rightarrow H^2_{\nu}(\mathbb{U}^{p|q })$$
are isometric isomorphisms.
Furthermore, we have
\begin{align*}
 R_{\nu,(p|q)}R^*_{\nu,(p|q)}=I & : (l_2(\mathbb{Z}_+^{p-1}, L_2(\mathbb{R}_+)) )^{2^q}\rightarrow (l_2(\mathbb{Z}_+^{p-1}, L_2(\mathbb{R}_+)) )^{2^q}\\
R^*_{\nu,(p|q)}R_{\nu,(p|q)}=I &: H^2_{\nu}(\mathbb{U}^{p|q} ) \rightarrow H^2_{\nu}(\mathbb{U}^{p|q} ).
\end{align*}

\end{thm}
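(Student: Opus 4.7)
The plan is to mimic exactly the strategy of Theorem~\ref{TCS-QE} for the quasi-elliptic case. The key ingredients are the orthogonal decomposition of $H^2_{\nu}(\mathbb{U}^{p|q})$ proved in the corollary to Theorem~\ref{unitary-B-S}, namely
\[
    H^2_{\nu}(\mathbb{U}^{p|q}) = \sum_{m=0}^q H^2_{\nu+m}(\mathbb{U}^p) \otimes \Lambda^m(\mathbb{C}^q),
\]
together with the classical fact from \cite{QV2} that for each $\mu > p$ the map $R_{\mu}^*: l_2(\mathbb{Z}_+^{p-1}, L_2(\mathbb{R}_+)) \to H^2_{\mu}(\mathbb{U}^p)$ is an isometric isomorphism. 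Since by Definition~\ref{SBT-QP} the operator $R^*_{\nu,(p|q)}$ is assembled block by block from the classical $R^*_{\nu+|M|}$, indexed by subsets $M\subset Q$, the overall isometry property should reduce to a bookkeeping check that the weight $\Gamma(\nu)/\Gamma(\nu+|M|)$ appearing in the inner product on $(l_2(\mathbb{Z}_+^{p-1}, L_2(\mathbb{R}_+)))^{2^q}$ matches the weight in the orthogonal decomposition of the super Bergman norm.

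Concretely, I would first pick an arbitrary element $((b_{M,n}(\xi))_{n\in\mathbb{Z}_+^{p-1}})_{M\subset Q}$ in the target Hilbert space and compute the super Bergman norm of its image
\[
    R^*_{\nu,(p|q)}\bigl( (b_{M,n})_{M\subset Q} \bigr) = \sum_{M\subset Q} R^*_{\nu+|M|}(\{b_{M,n}\}) \, \xi_M.
\]
Applying the corollary to Theorem~\ref{unitary-B-S} term by term, this norm equals
\[
    \sum_{m=0}^q \frac{\Gamma(\nu)}{\Gamma(\nu+m)} \sum_{|M|=m} \bigl\| R^*_{\nu+|M|}(\{b_{M,n}\}) \bigr\|_{\mathbb{U}^p,\nu+m}^2.
\]
Then I would invoke the classical isometry $\|R^*_{\nu+|M|}(\{b_{M,n}\})\|^2_{\mathbb{U}^p,\nu+|M|} = \sum_{n\in\mathbb{Z}_+^{p-1}} \|b_{M,n}\|^2_{L_2(\mathbb{R}_+)}$ from \cite{QV2}, after which the expression collapses to precisely the inner product on $(l_2(\mathbb{Z}_+^{p-1}, L_2(\mathbb{R}_+)))^{2^q}$ that was defined just before Definition~\ref{SBT-QP}. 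This proves that $R^*_{\nu,(p|q)}$ is an isometry.

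For surjectivity of $R^*_{\nu,(p|q)}$, I would note that each classical $R^*_{\nu+|M|}$ is surjective onto $H^2_{\nu+|M|}(\mathbb{U}^p)$, and summing over $M\subset Q$ this surjectivity transfers to the full orthogonal decomposition, whence $R^*_{\nu,(p|q)}$ is a unitary isomorphism. Consequently $R_{\nu,(p|q)}$ is its inverse, which is likewise a unitary isomorphism, and the two composition identities $R_{\nu,(p|q)} R^*_{\nu,(p|q)} = I$ and $R^*_{\nu,(p|q)} R_{\nu,(p|q)} = I$ follow at once. The only potential subtlety, and thus the real work of the proof, is keeping careful track of normalizing constants so that the weighted sum coming from the super Bergman decomposition is exactly absorbed by the weights in the inner product on the target; this is, however, purely routine and entirely parallel to the corresponding step in the proof of Theorem~\ref{TCS-QE}.
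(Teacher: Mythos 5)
Your proposal is correct and follows essentially the same route the paper takes: the paper proves this theorem by declaring it "similar to Theorem~\ref{TCS-QE}," whose proof is exactly your computation — expand the super Bergman norm of $R^*_{\nu,(p|q)}$ applied to an arbitrary element via the orthogonal decomposition with weights $\Gamma(\nu)/\Gamma(\nu+|M|)$, then invoke the classical isometry of each $R^*_{\nu+|M|}$ from \cite{QV2} to collapse the sum to the target inner product. Nothing is missing.
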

The proof of this result is similar to that of Theorem \ref{TCS-QE}.


\subsection{Nilpotent case}

We consider the space
\begin{equation*}
L_2(\mathbb{D}^p,\eta_{\nu}) = L_2(\C^{p-1}) \otimes L_2(\R) \otimes L_2(\R_+,\eta_{\nu}),
\end{equation*}
and the unitary operator $U_1=I \otimes F \otimes I$ acting on it, where $F$ is the Fourier transform on $L_2(\R)$.

Using the standard Cartesian coordinates $x'=(x_1,...,x_{p-1})$ and\\ $y'=(y_1,..., y_{p-1})$, where $z_k=x_k+iy_k$, in $\C^{p-1}=\R^{p-1} \times \R^{p-1}$,
we have
\begin{equation*}
L_2(\mathbb{D}^p,\eta_{\nu}) = L_2(\R^{p-1}) \otimes L_2(\R^{p-1}) \otimes L_2(\R) \otimes L_2(\R_+,\eta_{\nu}).
\end{equation*}
Consider the unitary operator $U_2= F_{(p-1)}\otimes I \otimes I \otimes I$, where \\ $F_{(p-1)} = F_1 \otimes ... \otimes F_{p-1}$ is $(p-1)$-dimensional Fourier transform, acting on this tensor decomposition.

Consider the change of variables
\begin{equation*}
u_k = \frac{1}{2\sqrt{\xi}} \xi_k - \sqrt{\xi} y_k , \ \ \ \
v_k = \frac{1}{2\sqrt{\xi}} \xi_k + \sqrt{\xi} y_k, \ \ \ \, k = 1,..., p-1,
\end{equation*}
which is equivalent to
\begin{equation*}
\xi_k = \sqrt{\xi} \left(u_k + v_k\right), \ \ \ \
y_k = \frac{1}{2\sqrt{\xi}} \left(-u_k + v_k\right), \ \ \ \, k = 1,..., p-1,
\end{equation*}
and the corresponding unitary operator $U_3$ acting on $L_2(\R^{p-1}) \otimes L_2(\R^{p-1}) \otimes L_2(\R) \otimes L_2(\R_+,\eta_{\nu})$
by the rule

\begin{equation*}
(U_3 \varphi)(u',v',\xi,v) = \varphi \left(\sqrt{\xi} \left(u' + v'\right), \frac{1}{2\sqrt{\xi}} \left(-u' + v'\right),\xi, v\right),
\end{equation*}
where $u'=(u_1,...,u_{p-1})$ and $v'=(v_1,...,v_{p-1})$.

In this case, we consider the isometric embedding
\begin{equation*}
R_{0,\nu}: L_2(\R^{p-1} \times \R_+) \longrightarrow L_2(\R^{p-1}) \otimes L_2(\R^{p-1}) \otimes L_2(\R) \otimes L_2(\R_+,\eta_{\nu})
\end{equation*}
given by the assignment
\begin{equation*}
R_{0,\nu} ( \psi(u',\xi))=
\pi^{-\frac{p-1}{4}}\,  e^{-\xi v -\frac{|v'|^2}{2}}\, \chi_{\R_+}(\xi) \left(\frac{4(2\xi)^{\nu-p}} {c_{\nu}\Gamma(\nu-p)}\right)^{\frac{1}{2}} \psi(u',\xi),
\end{equation*}
where the function $\psi(u',\xi)$ is extended by zero for $\xi \in \R \setminus \R_+$ for each $u' \in \R^{p-1}$. The adjoint operator
\begin{equation*}
R_{0,\nu}^* : L_2(\R^{p-1}) \otimes L_2(\R^{p-1}) \otimes L_2(\R) \otimes L_2(\R_+,\eta_{\nu}) \longrightarrow L_2(\R^{p-1} \times \R_+)
\end{equation*}
obviously has the form
\begin{align*}
&R_{0,\nu}^* ( \varphi(u',v',\xi,v))\\
&=\pi^{-\frac{p-1}{4}} \int_{\R^{p-1} \times \R_+} e^{-\xi v -\frac{|v'|^2}{2}} \left(\frac{4(2\xi)^{\nu-p}} {c_{\nu}\Gamma(\nu-p)}\right)^{\frac{1}{2}} f(u',v',\xi,v)\,dv'\, \frac{c_{\nu}}{4}\,v^{\nu-p-1}dv.
\end{align*}

In \cite{QV2}  the authors introduced the  operator $R_{\nu}=R_{0,\nu}^* U$ from $L_2(\mathbb{D}^p,\widetilde{\mu}_{\nu})$ onto $L_2(\R^{p-1} \times \R_+)$ and
the adjoint operator $R_{\nu}^*$ from $L_2(\R^{p-1} \times \R_+)$ onto the subspace
$\Aa^2_{\nu}(\mathbb{D}^p)$. They proved that $ R^*_{\nu}$ is the isometric isomorphism of $L_2(\R^{p-1} \times \R_+)$ onto the subspace $H^2_{\nu}(\mathbb{U}^p )$.
Furthermore
\begin{align*}
  R_{\nu} R_{\nu}^* = I &: L_2(\R^{p-1} \times \R_+) \longrightarrow L_2(\R^{p-1} \times \R_+), \\
  R_{\nu}^* R_{\nu} =B_{\mathbb{D}^p,\nu} &: L_2(\mathbb{D}^p,\widetilde{\mu}_{\nu}) \longrightarrow \Aa^2_{\nu}(\mathbb{D}^p),
\end{align*}
where $B_{\mathbb{D}^p,\nu}$ is the Bergman projection.

Now we define the analogue of the Bargmann transform for the super case.
\begin{defn}\label{SBT-N}
Consider the operator
$$R_{\nu,(p|q)}: H^2_{\nu}(\mathbb{U}^{p|q })\rightarrow (L_2(\R^{p-1} \times \R_+))^{2^q}$$
defined by $$R_{\nu,(p|q)}\left( \sum_{M\subset Q} \psi_M \xi_M \right)=
\left(   R_{\nu+|M|}(\psi_M)\right)_{M\subset Q}.$$ Whose adjoint operator
$$R_{\nu,(p|q)}^*: ( L_2(\R^{p-1} \times \R_+) )^{2^q}\rightarrow H^2_{\nu}(\mathbb{U}^{p|q })$$
is defined by $$R_{\nu,(p|q)}^* ( \psi_M (u',\xi) )_{M\subset Q})= \sum_{M\subset Q} R_{\nu+|M|}(\psi_M (u',\xi)) \xi_M .$$
  The inner product in $(L_2(\R^{p-1} \times \R_+) )^{2^q}$ is given by
\begin{align*}
   &\langle( \psi_M (u',\xi))_{M\subset Q}, (\phi_M (u',\xi))_{M\subset Q} \rangle \\
&=\sum_{M\subset Q} \frac{\Gamma(\nu)}{\Gamma(\nu+|M|)}  ( \psi_M (u',\xi) , \phi_M (u',\xi)  )_{L_2(\mathbb{R}_+)}.
\end{align*}

\end{defn}

\begin{thm}
The operators  $$R_{\nu,(p|q)}: H^2_{\nu}(\mathbb{U}^{p|q })\rightarrow (L_2(\R^{p-1} \times \R_+) )^{2^q}$$ \\ and
 $$R_{\nu,(p|q)}^*: (L_2(\R^{p-1} \times \R_+) )^{2^q}\rightarrow H^2_{\nu}(\mathbb{U}^{p|q })$$
are isometric isomorphisms.
Furthermore
\begin{align*}
 R_{\nu,(p|q)}R^*_{\nu,(p|q)}=I & : (L_2(\R^{p-1} \times \R_+) )^{2^q}\rightarrow (L_2(\R^{p-1} \times \R_+) )^{2^q}\\
R^*_{\nu,(p|q)}R_{\nu,(p|q)}=I &: H^2_{\nu}(\mathbb{U}^{p|q} ) \rightarrow H^2_{\nu}(\mathbb{U}^{p|q} ).
\end{align*}

\end{thm}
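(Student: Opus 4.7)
The plan is to follow the same strategy as in the proof of Theorem~\ref{TCS-QE}, exploiting the orthogonal decomposition
\[
    H^2_{\nu}(\mathbb{U}^{p|q}) = \bigoplus_{m=0}^q H^2_{\nu+m}(\mathbb{U}^p) \otimes \Lambda^m(\mathbb{C}^q)
\]
established in the corollary following Theorem~\ref{unitary-B-S}. By construction, both $R_{\nu,(p|q)}$ and $R^*_{\nu,(p|q)}$ decompose as direct sums, indexed by $M \subset Q$, of the classical (nilpotent) Bargmann-type transforms $R_{\nu+|M|}$ from \cite{QV2}, each coupled to the basis element $\xi_M$ of $\Lambda(\mathbb{C}^q)$. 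Thus the proof reduces to carefully matching the weighting factors on both sides.

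First I would take an arbitrary element $(\psi_M(u',\xi))_{M \subset Q}$ of $(L_2(\mathbb{R}^{p-1} \times \mathbb{R}_+))^{2^q}$ and compute the norm $\|R^*_{\nu,(p|q)}((\psi_M)_M)\|^2_{\mathbb{U}^{p|q},\nu}$. Inserting the definition of $R^*_{\nu,(p|q)}$ and applying the norm decomposition
\[
    \|\Psi\|^2_{\mathbb{U}^{p|q},\nu} = \sum_{m=0}^q \frac{\Gamma(\nu)}{\Gamma(\nu+m)} \sum_{|M|=m} \|\psi_M\|^2_{\mathbb{U}^p,\nu+m}
\]
proved in the corollary following Theorem~\ref{unitary-B-S}, the squared norm collapses to a sum of contributions $\frac{\Gamma(\nu)}{\Gamma(\nu+|M|)}\|R^*_{\nu+|M|}(\psi_M)\|^2_{\mathbb{U}^p,\nu+|M|}$ over $M \subset Q$.

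Next, I would invoke the classical fact, recalled just before Definition~\ref{SBT-N}, that each $R_{\nu+|M|}$ is an isometric isomorphism from $H^2_{\nu+|M|}(\mathbb{U}^p)$ onto $L_2(\mathbb{R}^{p-1} \times \mathbb{R}_+)$, so each summand becomes $\frac{\Gamma(\nu)}{\Gamma(\nu+|M|)}\|\psi_M\|^2_{L_2(\mathbb{R}^{p-1}\times\mathbb{R}_+)}$. This is exactly the definition of the inner product on $(L_2(\mathbb{R}^{p-1} \times \mathbb{R}_+))^{2^q}$ given in Definition~\ref{SBT-N}, so $R^*_{\nu,(p|q)}$ is an isometry. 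Surjectivity onto $H^2_{\nu}(\mathbb{U}^{p|q})$ follows because the classical $R^*_{\nu+m}$ already surjects onto $H^2_{\nu+m}(\mathbb{U}^p)$ in each even/odd slot, and the direct sum decomposition exhausts the super-Bergman space. Therefore $R^*_{\nu,(p|q)}$ is a unitary isomorphism whose two-sided inverse is necessarily $R_{\nu,(p|q)}$, yielding both identities $R_{\nu,(p|q)}R^*_{\nu,(p|q)} = I$ and $R^*_{\nu,(p|q)}R_{\nu,(p|q)} = I$.

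I do not expect any genuine obstacle: the argument is a bookkeeping reduction to the classical nilpotent case. The only point requiring attention is that the Gamma factors $\Gamma(\nu)/\Gamma(\nu+|M|)$ from the super-Bergman inner product must match the weights built into the chosen inner product on $(L_2(\mathbb{R}^{p-1}\times\mathbb{R}_+))^{2^q}$; this cancellation is precisely what Definition~\ref{SBT-N} is arranged to produce, exactly as in the quasi-elliptic case.
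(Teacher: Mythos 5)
Your argument is exactly the one the paper intends: it states only that ``the proof is similar to that of Theorem~\ref{TCS-QE}'', and your reduction to the classical nilpotent Bargmann transform $R_{\nu+|M|}$ via the orthogonal decomposition of $H^2_{\nu}(\mathbb{U}^{p|q})$, with the $\Gamma(\nu)/\Gamma(\nu+|M|)$ weights absorbed by the inner product of Definition~\ref{SBT-N}, is precisely that argument. The proposal is correct and takes essentially the same approach as the paper.
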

The proof is similar to that of theorem \ref{TCS-QE}.

\subsection{Quasi-nilpotent case}

Given an integer $1 \leq k \leq n-2$, we will write  points of $\mathbb{D}^p$ as $z=(z',w',z_n)$, where $z' \in \C^k$ and $w' \in \C^{p-k-1}$, and points of $\mathbb{D}^p$ as $(z',w',\zeta)$, respectively.

According to this notation we represent
\begin{equation*}
L_2(\mathbb{D}^p,\eta_{\nu})= L_2(\C^k)\otimes L_2(\C^{p-k-1}) \otimes L_2(\R) \otimes L_2(\R_+,\eta_{\nu}).
\end{equation*}
Applying, as in the previous two cases, the unitary operator $U_1= I \otimes I \otimes F \otimes I$,
we have that the image $\Aa_1(\mathbb{D}^p)=U_1(\Aa_0(\mathbb{D}^p))$.

Now introducing in $\C^k$ the polar coordinates, $z_l=r_l t_l$, where $r_l \in \R_+$, $t_l \in S^1=\T$, $l=1,\ldots,k$, and
Cartesian coordinates in $\C^{p-k-1}$, $x'=(x_1,...,x_{p-k-1})$, $y'=(y_1,...,y_{p-k-1})$, where $w_m = x_m +iy_m$, $m=1,...,p-k-1$, we have that $L_2(\mathbb{D}^p,\eta_{\nu})$ decomposes as the tensor product
\[
L_2(\R_+^k, rdr) \otimes L_2(\T^k) \otimes L_2(\R^{p-k-1}) \otimes L_2(\R^{p-k-1}) \otimes L_2(\R) \otimes L_2(\R_+,\eta_{\nu}).
\]
Consider the unitary operator $U_2= I \otimes \Fa_{(k)} \otimes F_{(p-k-1)} \otimes I \otimes I \otimes I$ acting from $L_2(\mathbb{D}^p,\eta_{\nu})$ onto
\begin{align*}
&  L_2(\R_+^k, rdr) \otimes l_2(\Z^{k} \otimes L_2(\R^{p-k-1}) \otimes L_2(\R^{p-k-1}) \otimes L_2(\R) \otimes L_2(\R_+,\eta_{\nu}) \\
&= l_2(\Z^k,\, L_2(\R_+^k, rdr) \otimes L_2(\R^{p-k-1}) \otimes L_2(\R^{p-k-1}) \otimes L_2(\R) \otimes L_2(\R_+,\eta_{\nu})),
\end{align*}
where  $\Fa_{(k)}= \Fa \otimes...\otimes \Fa$ is the $k$-dimensional discrete Fourier transform and $F_{(p-k-1)}= F \otimes...\otimes F$ is the $(p-k-1)$-dimensional Fourier transform.

Next consider the change of variables
\begin{equation*}
u_m = \frac{1}{2\sqrt{\xi}} \xi_m - \sqrt{\xi} y_m , \ \ \ \
v_m = \frac{1}{2\sqrt{\xi}} \xi_m + \sqrt{\xi} y_m, \ \ \ \, m = 1,..., p-k-1,
\end{equation*}
which is equivalent to
\begin{equation*}
\xi_m = \sqrt{\xi} \left(u_m + v_m\right), \ \ \ \
y_m = \frac{1}{2\sqrt{\xi}} \left(-u_m + v_m\right), \ \ \ \, m = 1,..., p-k-1.
\end{equation*}
Then, there is a corresponding unitary operator $U_3$ acting on
\begin{equation*}
l_2(\Z^k,\, L_2(\R_+^k, rdr) \otimes L_2(\R^{p-k-1}) \otimes L_2(\R^{p-k-1}) \otimes L_2(\R) \otimes L_2(\R_+,\eta_{\nu}))
\end{equation*}
by the assignment
\begin{equation*}
U_3  \{d_p(r,\xi',y',\xi,v)\}_{p\in \Z^k} = \left\{d_p \left(r, \sqrt{\xi} \left(u' + v'\right), \frac{1}{2\sqrt{\xi}} \left(-u' + v'\right),\xi, v\right) \right\}_{p\in \Z^k},
\end{equation*}
where $u'=(u_1,...,u_{p-k-1})$ and $v'=(v_1,...,v_{p-k-1})$.

In \cite{QV2} it is introduced the isometric imbedding $R_{0,\nu}$ from the Hilbert space $l_2(\Z^k_+,L_2(\R^{p-k-1} \times \R_+))$ into
\begin{equation*}
 l_2(\Z^k,(L_2(\R_+^k, rdr) \otimes L_2(\R^{p-k-1}) \otimes L_2(\R^{p-k-1}) \otimes L_2(\R) \otimes L_2(\R_+,\eta_{\nu})),
\end{equation*}
which maps $\{c_n(u',\xi)\}_{n\in \Z_+^k}$ to
\begin{equation*}
\left\{\pi^{-\frac{p-k-1}{4}}
\chi_{\Z^k_+}(n) \chi_{\R_+}(\xi)\left(
\frac{2^{k+2}(2\xi)^{|n|+\nu-p+k}}{c_{\nu}n!\Gamma(\nu-p)} \right)^{\frac{1}{2}} r^n e^{-\xi(|r|^2 +v)- \frac{|v'|^2}{2}}c_n (u',\xi)\right\}_{n \in \Z^k},
\end{equation*}
where the functions $c_n(u',\xi))$ are extended by zero for $\xi \in \R \setminus \R_+$ for each $u' \in \R^{p-k-1}$ and each $n \in \Z^k$.

The adjoint operator $R_{0,\nu}^*$ acts from
\begin{equation*}
l_2(\Z^k,(L_2(\R_+^k, rdr) \otimes L_2(\R^{p-k-1}) \otimes L_2(\R^{p-k-1}) \otimes L_2(\R) \otimes L_2(\R_+,\eta_{\nu}))
\end{equation*}
onto $l_2(\Z^k_+,L_2(\R^{p-k-1} \times \R_+))$ and maps a sequence $\{d_n(r,u',v',\xi,v)\}_{n \in \Z^k}$ into
\begin{align*}
& R_{0,\nu}^* ( \{d_n(r,u',v',\xi,v)\}_{n \in \Z^k})\\
=& \left\{ \pi^{-\frac{p-k-1}{4}} \left(\frac{2^{k+2}}{c_{\nu}}\,
\frac{(2\xi)^{|n|+\nu-p+k}}{n!\, \Gamma(\nu-p)} \right)^{\frac{1}{2}}\int_{\R^k_+ \times \R^{p-k-1} \times \R_+} r^n e^{-\xi(|r|^2 +v)- \frac{|v'|^2}{2}} \right. \\
&\times  \left. d_n(r,u',v',\xi,v)\, rdr\, dv'\, \frac{c_{\nu}v^{\nu-p-1}}{4}\,dv \right\}_{n \in \Z^k_+}.
\end{align*}

In \cite{QV2} the authors introduce the operator $R_{\nu}=R_{0,\nu}^* U$ from $L_2(\mathbb{D}^p,\widetilde{\mu}_{\nu})$ onto  $l_2(\Z^k_+,L_2(\R^{p-k-1} \times \R_+))$,
and the adjoint operator $R_{\nu}^*$ being the isometric isomorphism of $l_2(\Z^k_+,L_2(\R^{p-k-1} \times \R_+))$ onto the subspace
$\Aa^2_{\nu}(\mathbb{D}^p)$ of $L_2(\mathbb{D}^p,\widetilde{\mu}_{\nu})$.
Furthermore
\begin{align*}
  R_{\nu} R_{\nu}^* = I &: l_2(\Z^k_+,L_2(\R^{p-k-1} \times \R_+)) \longrightarrow l_2(\Z^k_+,L_2(\R^{p-k-1} \times \R_+)), \\
  R_{\nu}^* R_{\nu} =B_{\mathbb{D}^p,\nu} &: L_2(\mathbb{D}^p,\widetilde{\mu}_{\nu}) \longrightarrow \Aa^2_{\nu}(\mathbb{D}^p),
\end{align*}
where $B_{\mathbb{D}^p,\nu}$ is the Bergman projection.


\begin{defn}\label{SBT-QN}
 Consider the operator
$$R_{\nu,(p|q)}: H^2_{\nu}(\mathbb{U}^{p|q })\rightarrow ( l_2(\Z^k_+,L_2(\R^{p-k-1} \times \R_+)) )^{2^q}$$
defined by $$R_{\nu,(p|q)}\left( \sum_{M\subset Q} \psi_M \xi_M \right)=
\left(   R_{\nu+|M|}(\psi_M)\right)_{M\subset Q}.$$
The adjoint operator
$$R_{\nu,(p|q)}^*: (  l_2(\Z^k_+,L_2(\R^{p-k-1} \times \R_+)) )^{2^q}\rightarrow H^2_{\nu}(\mathbb{U}^{p|q })$$
is defined by $$R_{\nu,(p|q)}^* ( \{c_{M,n}(u',\xi)\}_{n\in \Z_+^k} )_{M\subset Q})= \sum_{M\subset Q} R_{\nu+|M|}(\{c_{M,n}(u',\xi)\}_{n\in \Z_+^k}) \xi_M ,$$
where
  the inner product in $(l_2(\Z^k_+,L_2(\R^{p-k-1} \times \R_+)) )^{2^q}$ is given by
\begin{align*}
&  \langle( \{a_{M,n}(u',\xi)\}_{n\in \Z_+^k} )_{M\subset Q}, (\{b_{M,n}(u',\xi)\}_{n\in \Z_+^k})_{M\subset Q} \rangle\\
&=\sum_{M\subset Q} \frac{\Gamma(\nu)}{\Gamma(\nu+|M|)}  \sum_{n\in \Z_+^k}  ( a_{M,n}(u',\xi) , b_{M,n}(u',\xi)  )_{L_2(\R^{p-k-1} \times \R_+)}.
\end{align*}

\end{defn}

\begin{thm}
The operators  $$R_{\nu,(p|q)}: H^2_{\nu}(\mathbb{U}^{p|q })\rightarrow (l_2(\Z^k_+,L_2(\R^{p-k-1} \times \R_+)) )^{2^q}$$ \\ and
 $$R_{\nu,(p|q)}^*: (l_2(\Z^k_+,L_2(\R^{p-k-1} \times \R_+)) )^{2^q}\rightarrow H^2_{\nu}(\mathbb{U}^{p|q })$$
are isometric isomorphisms.
Furthermore
\begin{align*}
 &R_{\nu,(p|q)}R^*_{\nu,(p|q)}=I  : \\
 &(l_2(\Z^k_+,L_2(\R^{p-k-1} \times \R_+)) )^{2^q}\rightarrow (l_2(\Z^k_+,L_2(\R^{p-k-1} \times \R_+)) )^{2^q}\\
&R^*_{\nu,(p|q)}R_{\nu,(p|q)}=I : \\
&H^2_{\nu}(\mathbb{U}^{p|q} ) \rightarrow H^2_{\nu}(\mathbb{U}^{p|q} ).
\end{align*}
\end{thm}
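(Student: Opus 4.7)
The plan is to adapt the proof of Theorem~\ref{TCS-QE} to the quasi-nilpotent setting, using the orthogonal decomposition
\[
    H^2_{\nu}(\mathbb{U}^{p|q}) = \bigoplus_{m=0}^q H^2_{\nu+m}(\mathbb{U}^p) \otimes \Lambda^m(\C^q)
\]
established in the corollary following Theorem~\ref{unitary-B-S}. Since $R_{\nu,(p|q)}$ and $R_{\nu,(p|q)}^*$ are defined by applying the classical quasi-nilpotent Bargmann transform $R_{\nu+|M|}$ (and its adjoint) componentwise to each $\psi_M$, the verification reduces to a bookkeeping argument matching the weighted direct-sum norms on both sides.

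First I would show that $R_{\nu,(p|q)}^*$ is an isometry. Starting from $(\{c_{M,n}(u',\xi)\}_{n\in\Z_+^k})_{M\subset Q} \in (l_2(\Z^k_+,L_2(\R^{p-k-1}\times\R_+)))^{2^q}$, I would compute
\[
    \bigl\|R_{\nu,(p|q)}^*\bigl((\{c_{M,n}\})_{M\subset Q}\bigr)\bigr\|^2_{\mathbb{U}^{p|q},\nu}
    = \Bigl\|\sum_{M\subset Q} R_{\nu+|M|}^*(\{c_{M,n}\})\, \xi_M\Bigr\|^2_{\mathbb{U}^{p|q},\nu}.
\]
Since each summand $R_{\nu+|M|}^*(\{c_{M,n}\})$ lies in $H^2_{\nu+|M|}(\mathbb{U}^p)$, applying the corollary following Theorem~\ref{unitary-B-S} gives
\[
    \sum_{m=0}^q \frac{\Gamma(\nu)}{\Gamma(\nu+m)} \sum_{|M|=m} \bigl\|R_{\nu+|M|}^*(\{c_{M,n}\})\bigr\|^2_{\mathbb{U}^p,\nu+m}.
\]
Now I would invoke the classical result $R_{\nu+|M|}R_{\nu+|M|}^* = I$ on $l_2(\Z^k_+,L_2(\R^{p-k-1}\times\R_+))$, which implies that $R_{\nu+|M|}^*$ is isometric, so each norm equals $\|\{c_{M,n}\}\|^2_{l_2(\Z^k_+,L_2(\R^{p-k-1}\times\R_+))}$. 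This matches exactly the weighted inner product defining the norm on $(l_2(\Z^k_+,L_2(\R^{p-k-1}\times\R_+)))^{2^q}$ given in Definition~\ref{SBT-QN}.

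Second, I would verify $R_{\nu,(p|q)}^* R_{\nu,(p|q)} = I$ on $H^2_{\nu}(\mathbb{U}^{p|q})$ by applying it to $\Psi = \sum_{M\subset Q}\psi_M\xi_M$ and using $R_{\nu+|M|}^* R_{\nu+|M|} = \mathrm{id}$ on $H^2_{\nu+|M|}(\mathbb{U}^p)$ componentwise; the identity $R_{\nu,(p|q)}R_{\nu,(p|q)}^* = I$ follows similarly from the classical $R_{\nu+|M|}R_{\nu+|M|}^* = I$. Together with the first step, surjectivity of $R_{\nu,(p|q)}^*$ and injectivity of $R_{\nu,(p|q)}$ follow, establishing that both are isometric isomorphisms.

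There is no real obstacle here once one recognizes that the super Bargmann transform has been engineered precisely so that the Grassmann grading splits off: the weights $\Gamma(\nu)/\Gamma(\nu+|M|)$ are baked into both the super Bergman norm and the inner product on the target. The only care required is tracking the grading consistently so that the $|M|$-dependent shift in the weight parameter on the Bergman side matches the $|M|$-dependent weight on the sequence-space side. This is identical in spirit to the three previous cases, and hence the author's statement that ``the proof is similar to that of Theorem~\ref{TCS-QE}'' is justified.
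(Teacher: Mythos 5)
Your proposal is correct and follows essentially the same route as the paper: the paper's proof of Theorem~\ref{TCS-QE} (to which the quasi-nilpotent case is explicitly deferred) likewise expands $R_{\nu,(p|q)}^*$ of a sequence element, applies the orthogonal decomposition of the super Bergman space into weighted classical Bergman spaces, and invokes the classical identities $R_{\nu+m}R_{\nu+m}^*=I$ and $R_{\nu+m}^*R_{\nu+m}=B_{\nu+m}$ componentwise to match the weighted norms. Your additional remark that the projection restricts to the identity on $H^2_{\nu+|M|}(\mathbb{U}^p)$ is the correct way to read the second classical identity, so no gap remains.
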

The proof is similar to that of theorem \ref{TCS-QE}.

\subsection{Quasi-hyperbolic case}

We represent $\mathbb{D}^p = \C^{p-1}\times \R \times \R_+$ in the form $\C^{p-1} \times \Pi$, where $\Pi$ is the upper half-plane, and introduce in $\mathbb{D}^p$ the ``non-isotropic'' upper semi-sphere
\begin{equation*}
\Omega = \{ (z', \zeta) \in \C^{p-1} \times \Pi : \
 |z'|^2 + |\zeta| = 1 \, \}.
\end{equation*}
The points of $\Omega$ admit the natural parameterization
\begin{align*}
z_k = s_k t_k, &  \ \mathrm{where} \ \
s_k \in [0,1), \ t_k \in S^1, \ \ k=1,..., p-1, \\
\zeta = \rho e^{i \theta}, &  \ \mathrm{where} \ \
\rho \in (0,1], \ \theta \in (0,\pi),
\end{align*}
and
\begin{equation*}
\sum_{k=1}^{p-1} s_k^2 + \rho = 1.
\end{equation*}
This in turn induces the following representation of the points $(z',\zeta) \in \mathbb{D}^p = \C^{p-1} \times \Pi$
\begin{equation*}
z_k = r^{\frac{1}{2}}s_k t_k, \ \ k=1,...,p-1,  \ \ \ \ \ \
\zeta = r\rho e^{i \theta},
\end{equation*}
where $r \in \R_+$.

We represent now $\mathbb{D}^p=\tau(\B^{p-1}) \times \T^{p-1} \times \R_+ \times (0,\pi)$, where
$$\tau(\B^{p-1})= \{ s=(s_1,...,s_{p-1}) \in \R_+^{p-1} : \sum_{k=1}^{p-1} s_k^2 < 1\}$$
is the base (in the sense of a Reinhardt domain) of the unit ball $\B^{p-1}$, and $\T^{p-1}= S^1 \times ... \times S^1$ is the $p-1$ dimensional torus.

Introduce the new coordinate system $(s,t,r,\theta)$ in $\mathbb{D}^p$, where we have $s=(s_1,...,s_{p-1}) \in \tau(\B^{p-1})$, $t=(t_1,...,t_{p-1}) \in \T^{p-1}$, $r \in \R_+$, and $\theta \in (0,\pi)$, which is connected with the old one $(z',\zeta)$ by the formulas
\begin{equation} \label{eq:s,t,r,theta}
s_k =\frac{|z_k|}{\sqrt{|z'|^2 + |\rho|}}, \ \ \
t_k = \frac{z_k}{|z_k|}, \ \ \ r=|z'|^2 + |\rho|, \ \ \
\theta = \arg \zeta,
\end{equation}
or
\begin{equation*}
z_k = r^{\frac{1}{2}}s_k t_k, \ \ \ \ \ \ \zeta = r(1-|s|^2)e^{i \theta},
\end{equation*}
where $k = 1,..., p-1$.

A direct computation shows that under the change of variables (\ref{eq:s,t,r,theta}) we have
\begin{equation*}
dv(z',\zeta)= r^p (1-|s|^2)\prod_{k=1}^{p-1}s_k ds_k \prod_{k=1}^{p-1} \frac{dt_k}{it_k}\,dr d\theta,
\end{equation*}
and
\begin{equation*}
\eta_{\nu}= \frac{c_{\nu}}{4}\, r^{\nu-p-1} (1-|s|^2)^{\nu-p-1} \frac{c_{\nu}}{4}\sin^{\nu-p-1} \theta.
\end{equation*}

Introduce the unitary operator $U_1 = I \otimes \Fa_{(p-1)} \otimes M_{\nu} \otimes I$ which acts from the space
\begin{align*}
& L_2(\tau(\B^{p-1}), (1-|s|^2)^{\nu-p}sds) \otimes L_2(\T^{p-1})\\ & \otimes L_2(\R_+, r^{\nu-1}dr) \otimes L_2((0,\pi),\frac{c_{\nu}}{4}\sin^{\nu-p-1} \theta d\theta)
\end{align*}
onto the space \begin{align*}
 l_2(\Z^{p-1}, L_2(\tau(\B^{p-1}), (1-|s|^2)^{\nu-p}sds) \otimes L_2(\R) \otimes L_2((0,\pi),\frac{c_{\nu}}{4}\sin^{\nu-p-1} \theta d\theta)),
\end{align*}
where the Mellin transform $M_{\nu}: L_2(\R_+, r^{\nu-1}dr) \longrightarrow L_2(\R)$ is given by
\begin{equation*}
(M_{\nu}\psi)(\xi) = \frac{1}{\sqrt{2\pi}} \int_{\R_+} r^{-i\xi + \frac{\nu}{2}-1}\, \psi(r)dr,
\end{equation*}
and the inverse is as follows
\begin{equation*}
 (M_{\nu}^{-1}\psi)(r) = \frac{1}{\sqrt{2\pi}} \int_{\R} r^{-i\xi - \frac{\nu}{2}}\, \psi(\xi)d\xi,
\end{equation*}
and $\Fa_{(p-1)} = \Fa \otimes ... \otimes \Fa$ is the $(p-1)$-dimensional discrete Fourier transform.

Introduce the isometric imbedding $R_{0,\nu}$ of the space $l_2(\Z^{p-1}_+,L_2(\R))$ into the space
\begin{equation*}
l_2(\Z_+^{p-1},L_2(\tau(\B^{p-1}), (1-|s|^2)^{\nu-p}sds) \otimes L_2(\R) \otimes L_2((0,\pi),\frac{c_{\nu}}{4}\sin^{\nu-p-1} \theta d\theta))
\end{equation*}
by the rule
\begin{equation*}
R_{0,\nu} : \{c_n(\xi)\}_{n \in \Z^{p-1}_+} \longmapsto
\{ c_n(\xi)\, \alpha_{n,\nu}(\xi)\,\beta_{n,\nu}(s,\xi,\theta)\}_{n\in \Z^{p-1}},
\end{equation*}
where the functions $\beta_{n,\nu}=\beta_{n,\nu}(s,\xi,\theta)$ and $\alpha_{n,\nu}(\xi)$ are given by
\begin{equation} \label{eq:beta_p}
\beta_{n,\nu} = s^n [1-(1+i)|s|^2]^{-\frac{\nu+|n|}{2} + i \xi} e^{-2\left(\xi +i \frac{\nu + |n|}{2}\right) \arctan\left[\left(1-i\frac{|s|^2}{1-|s|^2}\right)\tan \frac{\theta}{2} +\frac{|s|^2}{1-|s|^2}\right]}.
\end{equation}
and
\begin{equation}\label{eq:alpha_p}
\alpha_{n,\nu}(\xi)= \left(\int_{\tau(\B^{p-1})\times (0,\pi)}
|\beta_{n,\nu}(s,\xi,\theta)|^2 (1-|s|^2)^{\nu-p} \frac{c_{\nu}}{4}\sin^{\nu-p-1} \theta\, sds d\theta\right)^{-\frac{1}{2}}.
\end{equation}

The adjoint operator $R_{0,\nu}^*$ which acts from
\begin{equation*}
l_2(\Z^{p-1}_+,L_2(\tau(\B^{p-1}), (1-|s|^2)^{\nu-p}sds) \otimes L_2(\R) \otimes L_2((0,\pi),\frac{c_{\nu}}{4}\sin^{\nu-p-1} \theta d\theta))
\end{equation*}
onto the space $l_2(\Z^{p-1}_+,L_2(\R))$
has obviously the form
\begin{align*}
& R_{0,\nu}^* ( \{d_n(s,\xi,\theta)\}_{n \in \Z^{p-1}}) \\
=&  \left\{\alpha_{n,\nu}(\xi) \int_{\tau(\B^{p-1})\times (0,\pi)} \overline{\beta_{n,\nu}(s,\xi,\theta)}\, d_n(s,\xi,\theta)\,(1-|s|^2)^{\nu-p}\right.\\
&\times\left.
\frac{c_{\nu}}{4}\sin^{\nu-p-1} \theta\, sds d\theta \right\}_{n \in \Z^{p-1}_+}.
\end{align*}

In \cite{QV2} the authors introduced the operator $R_{\nu}=R_{0,\nu}^* U$ from $L_2(\mathbb{D}^p,\widetilde{\mu}_{\nu})$ onto $l_2(\Z^{p-1}_+,L_2(\R))$,
and the adjoint operator $R_{\nu}^*$ being an isometric isomorphism from $l_2(\Z^{p-1}_+,L_2(\R))$ onto the subspace
$\Aa^2_{\nu}(\mathbb{D}^p)$.
Furthermore
\begin{align*}
  R_{\nu} R_{\nu}^* = I &: l_2(\Z^{p-1}_+,L_2(\R)) \longrightarrow l_2(\Z^{p-1}_+,L_2(\R)), \\
  R_{\nu}^* R_{\nu} =B_{\mathbb{U}^p,\nu} &: L_2(\mathbb{U}^p,\widetilde{\mu}_{\nu}) \longrightarrow \Aa^2_{\nu}(\mathbb{U}^p),
\end{align*}
where $B_{\mathbb{U}^p,\nu}$ is the Bergman projection.


Now we define the analogue operator for the super case
\begin{defn}\label{SBT-QH}
 Consider the operator
$$R_{\nu,(p|q)}: H^2_{\nu}(\mathbb{U}^{p|q })\rightarrow (l_2(\Z^{p-1}_+,L_2(\R)) )^{2^q}$$
defined by $$R_{\nu,(p|q)}\left( \sum_{M\subset Q} \psi_M \xi_M \right)=
\left(   R_{\nu+|M|}(\psi_M)\right)_{M\subset Q}.$$
The adjoint operator
$$R_{\nu,(p|q)}^*: (  l_2(\Z^{p-1}_+,L_2(\R)) )^{2^q}\rightarrow H^2_{\nu}(\mathbb{U}^{p|q })$$
is defined by

$$R_{\nu,(p|q)}^* (\{c_{M,n}(\xi)\}_{n \in \Z^{p-1}_+} )_{M\subset Q})= \sum_{M\subset Q} R_{\nu+|M|}( \{c_{M,n}(\xi)\}_{n \in \Z^{p-1}_+} ) \xi_M ,$$
where
  the inner product in $(l_2(\Z^{p-1}_+,L_2(\R)) )^{2^q}$ is given by
\begin{align*}
&  \langle( \{a_{M,n}(\xi)\}_{n\in \Z_+^{p-1}} )_{M\subset Q}, (\{b_{M,n}(\xi)\}_{n\in \Z_+^{p-1}})_{M\subset Q} \rangle\\
&=\sum_{M\subset Q} \frac{\Gamma(\nu)}{\Gamma(\nu+|M|)}  \sum_{n\in \Z_+^{p-1}}  ( a_{M,n}(\xi) , b_{M,n}(\xi)  )_{L_2( \R_+)}.
\end{align*}

\end{defn}

\begin{thm}
The operators  $$R_{\nu,(p|q)}: H^2_{\nu}(\mathbb{U}^{p|q })\rightarrow (l_2(\Z^{p-1}_+,L_2(\R)) )^{2^q}$$ \\ and
 $$R_{\nu,(p|q)}^*: (l_2(\Z^{p-1}_+,L_2(\R)) )^{2^q}\rightarrow H^2_{\nu}(\mathbb{U}^{p|q })$$
are isometric isomorphisms.
Furthermore, we have
\begin{align*}
 R_{\nu,(p|q)}R^*_{\nu,(p|q)}=I & : (l_2(\Z^{p-1}_+,L_2(\R)))^{2^q}\rightarrow (l_2(\Z^{p-1}_+,L_2(\R)) )^{2^q}\\
R^*_{\nu,(p|q)}R_{\nu,(p|q)}=I &: H^2_{\nu}(\mathbb{U}^{p|q} ) \rightarrow H^2_{\nu}(\mathbb{U}^{p|q} ).
\end{align*}
\end{thm}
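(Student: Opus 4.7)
The plan is to mirror the argument used for Theorem~\ref{TCS-QE} in the quasi-elliptic case, adapted to the data of the quasi-hyperbolic super Bargmann transform. The key ingredients are the orthogonal decomposition
\[
    H^2_{\nu}(\mathbb{U}^{p|q}) = \sum_{m=0}^q H^2_{\nu+m}(\mathbb{U}^p)\otimes \Lambda^m(\mathbb{C}^q),
\]
supplied by the Corollary after Theorem~\ref{unitary-B-S}, together with the classical quasi-hyperbolic result of \cite{QV2} giving that for every admissible weight $\mu$ the operator $R^*_{\mu}\colon l_2(\Z^{p-1}_+,L_2(\R))\to H^2_{\mu}(\mathbb{U}^p)$ is an isometric isomorphism, with $R_\mu R^*_\mu = I$ and $R^*_\mu R_\mu = B_{\mathbb{U}^p,\mu}$.

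First I would verify the isometry of $R^*_{\nu,(p|q)}$. Fix an element $((\{c_{M,n}(\xi)\}_{n\in\Z^{p-1}_+}))_{M\subset Q}$ in $(l_2(\Z^{p-1}_+,L_2(\R)))^{2^q}$ and apply Definition~\ref{SBT-QH} to rewrite
\[
    R_{\nu,(p|q)}^*\bigl((\{c_{M,n}(\xi)\}_n)_{M\subset Q}\bigr) = \sum_{M\subset Q} R^*_{\nu+|M|}(\{c_{M,n}(\xi)\}_n)\,\xi_M.
\]
Since the summands $R^*_{\nu+|M|}(\{c_{M,n}\}_n)$ lie in the components $H^2_{\nu+|M|}(\mathbb{U}^p)$ of the orthogonal decomposition, the Corollary yields
\[
    \bigl\|R^*_{\nu,(p|q)}((\{c_{M,n}\}_n)_{M\subset Q})\bigr\|^2_{\mathbb{U}^{p|q},\nu}
    = \sum_{M\subset Q} \frac{\Gamma(\nu)}{\Gamma(\nu+|M|)} \bigl\|R^*_{\nu+|M|}(\{c_{M,n}\}_n)\bigr\|^2_{\mathbb{U}^p,\nu+|M|}.
\]
Applying the classical quasi-hyperbolic identity $R_{\nu+|M|}R^*_{\nu+|M|}=I$ collapses each $R^*_{\nu+|M|}$-norm to $\sum_n \|c_{M,n}\|^2_{L_2(\R)}$, which is precisely the defining norm of $(l_2(\Z^{p-1}_+,L_2(\R)))^{2^q}$ specified in Definition~\ref{SBT-QH}.

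This shows $R^*_{\nu,(p|q)}$ is isometric, hence injective with closed range. The remaining identities follow formally: the definition of $R_{\nu,(p|q)}$ as a direct sum of the $R_{\nu+|M|}$ on the decomposition components, together with $R_{\nu+|M|}R^*_{\nu+|M|}=I$, gives $R_{\nu,(p|q)}R^*_{\nu,(p|q)}=I$ on the target; and $R^*_{\nu+|M|}R_{\nu+|M|}=B_{\mathbb{U}^p,\nu+|M|}$ restricted to each summand $H^2_{\nu+|M|}(\mathbb{U}^p)\otimes\Lambda^{|M|}(\mathbb{C}^q)$ is simply the identity (the Bergman projection acts trivially on holomorphic elements), whence $R^*_{\nu,(p|q)}R_{\nu,(p|q)}=I$ on $H^2_\nu(\mathbb{U}^{p|q})$. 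Since $R^*_{\nu,(p|q)}$ has a two-sided inverse it is surjective, and both $R_{\nu,(p|q)}$ and $R^*_{\nu,(p|q)}$ are isometric isomorphisms.

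The only conceptual point requiring care, and thus the most delicate step, is to make sure that the weights $\Gamma(\nu)/\Gamma(\nu+|M|)$ appearing in the inner product on $(l_2(\Z^{p-1}_+,L_2(\R)))^{2^q}$ match the corresponding weights coming from the Corollary's decomposition of the super Bergman norm; once this bookkeeping is checked, the rest reduces to the scalar quasi-hyperbolic theorem of \cite{QV2} on each multiplet indexed by $M\subset Q$.
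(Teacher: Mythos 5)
Your proposal is correct and follows essentially the same route as the paper, which proves the quasi-elliptic version (Theorem~\ref{TCS-QE}) by expanding the super Bergman norm via the orthogonal decomposition into weighted Bergman spaces $H^2_{\nu+|M|}(\mathbb{U}^p)$ with weights $\Gamma(\nu)/\Gamma(\nu+|M|)$ and then invoking the componentwise isometry from \cite{QV2}; for the quasi-hyperbolic case the paper simply states that the proof is analogous. If anything, your write-up is slightly more complete, since you also spell out the identities $R_{\nu,(p|q)}R^*_{\nu,(p|q)}=I$ and $R^*_{\nu,(p|q)}R_{\nu,(p|q)}=I$, which the paper's model proof leaves implicit.
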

The proof is similar to that of theorem \ref{TCS-QE}.

\section{Toeplitz Operators}
\label{sec:Toeplitz}

\begin{defn}
For $F$ an element of $\mathcal{C}(\mathbb{B}^{p|q })$  (an element of $\mathcal{C}(\mathbb{U}^{p|q })$), the super-Toeplitz operator $T_F^{\nu}$  on $H^2_{\nu}(\mathbb{B}^{p|q })$ (on $H^2_{\nu}(\mathbb{U}^{p|q })$, respectively) is defined by
                                        $$ T_F^{\nu}\Psi = P^{\nu} (F \Psi),$$
where $P^{\nu}$ denotes the orthogonal projection onto $H^2_{\nu}(\mathbb{B}^{p|q })$ (onto $H^2_{\nu}(\mathbb{U}^{p|q })$, respectively).
\end{defn}

In \cite{LU} the authors proved that every  super
Toeplitz operator $T_F^{\nu}$ on $H^2_{\nu}(\mathbb{B}^{p|q })$ is given by the $2^q \times 2^q$ -matrix. With respect to the decomposition $$\Psi=\sum_{M\subset Q} \psi_M \xi_M H^2_{\nu}(\mathbb{B}^{p|q }),$$
the super Toeplitz operator has the form
\begin{align}\label{Toeplitz-Ball-1}
(T_F^{\nu})_{I,J}=&  \sum_{I\cup J\subset K\subset Q} \left[ \varepsilon_{K \setminus I,I}  \varepsilon_{K \setminus J,J}
\frac{\Gamma(\nu+|I|-p)}{\Gamma(\nu+|J|-p)}\right.\\
 &\times \left. T_{\nu+|I|}^{\nu+|J|}\left( F_{K \setminus I,K \setminus J} (w)  (1 -ww)^{|K|-|I|} \right)\right].
\end{align}
Here, for $0 \leq i, j \leq q$, $T_{\nu+i}^{\nu+j}$ denotes the Bergman-type Toeplitz operator
$$ T_{\nu+i}^{\nu+j}(f)= P_{\nu+i} f P_{\nu+j}: H^2_{\nu+j}(\mathbb{B}^{p })\rightarrow H^2_{\nu+i}(\mathbb{B}^{p})$$
from $H^2_{\nu+j}(\mathbb{B}^{p })$  to $H^2_{\nu+i}(\mathbb{B}^{p})$.

The following proposition is a consequence of the above result and
the fact that $H^2_{\nu}(\mathbb{B}^{p|q })$ is isometric to $H^2_{\nu}(\mathbb{U}^{p|q })$,
which is given by Theorem \ref{unitary-B-S}.

\begin{prop}
 With respect to the decomposition
 $$\Psi=\sum_{M\subset Q} \psi_M \xi_M H^2_{\nu}(\mathbb{U}^{p|q }),$$
  the super Toeplitz operator $T_F^{\nu}$ on $H^2_{\nu}(\mathbb{U}^{p|q })$ is given by the $2^q \times 2^q$ -matrix
\begin{align*}
(T_F^{\nu})_{I,J}=&  \sum_{I\cup J\subset K\subset Q} \left[ \varepsilon_{K \setminus I,I}  \varepsilon_{K \setminus J,J}
\frac{\Gamma(\nu+|I|-p)}{\Gamma(\nu+|J|-p)}\right.\\
 &\times  \left.T_{\nu+|I|}^{\nu+|J|}\left( F_{K \setminus I,K \setminus J} (w)  (\frac{ w_p-\bar{w_p}}{2i}-w'\bar{w'})^{|K|-|I|} \right)\right].
\end{align*}
Here, for $0 \leq i, j \leq q$, $T_{\nu+i}^{\nu+j}$ denotes the Bergman-type Toeplitz operator
$$ T_{\nu+i}^{\nu+j}(f)= P_{\nu+i} f P_{\nu+j}: H^2_{\nu+j}(\mathbb{U}^{p })\rightarrow H^2_{\nu+i}(\mathbb{U}^{p})$$
from $H^2_{\nu+j}(\mathbb{U}^{p })$  to $H^2_{\nu+i}(\mathbb{U}^{p})$.
\end{prop}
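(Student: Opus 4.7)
The plan is to transfer the known formula \eqref{Toeplitz-Ball-1} from the super ball to the super Siegel domain using the unitary operator $U_\nu$ from Theorem~\ref{unitary-B-S}. The key observation is that, by naturality of the Bergman projection under a unitary equivalence, we have
\[
    T_F^\nu \text{ on } H^2_\nu(\mathbb{U}^{p|q}) = U_\nu\,T_{U_\nu^* F}^\nu\,U_\nu^* \text{ on } H^2_\nu(\mathbb{B}^{p|q}),
\]
where $U_\nu^* F$ is the pullback of the symbol under $\psi$ (multiplication operators and orthogonal projections both transport covariantly under a unitary intertwiner). So the entire computation reduces to pulling back the formula on the ball along the super Cayley transform and reorganizing the resulting factors.

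First I would write down the block decomposition of $U_\nu$ and $U_\nu^*$ induced by the splitting $H^2_\nu(\mathbb{B}^{p|q}) = \sum_{m=0}^q H^2_{\nu+m}(\mathbb{B}^p)\otimes \Lambda^m(\mathbb{C}^q)$. As shown in the corollary after Theorem~\ref{unitary-B-S}, on the $m$-th summand $U_\nu$ acts, up to a power of $-i$ absorbed into the odd coordinates, as the classical unitary $V_{\nu+m}\colon H^2_{\nu+m}(\mathbb{B}^p)\to H^2_{\nu+m}(\mathbb{U}^p)$. Consequently the scalar Bergman-type Toeplitz operators $T_{\nu+i}^{\nu+j}$ on the ball intertwine, via $V_{\nu+i}$ and $V_{\nu+j}$, with their counterparts on $\mathbb{U}^p$, which is exactly the statement that the symbol transforms by pullback along $\psi$.

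Next I would handle the $(1-w\bar{w})^{|K|-|I|}$ factor appearing inside the ball formula \eqref{Toeplitz-Ball-1}. This is where Lemma~\ref{lema-d-p-form} comes in: specializing \eqref{exp-inv-1-p-d} to $u=z$ and $\eta=\xi$ (the diagonal), and restricting to the even part $\xi\bar\xi=0$ slice that governs the scalar weight, one gets
\[
    (1-z\bar z) = \Bigl(\tfrac{w_p-\bar w_p}{2i}-w'\bar{w'}\Bigr)\,\cdot\,\bigl|1+z_p\bigr|^2,
\]
so $(1-z\bar{z})^{|K|-|I|}$ pulls back to $(\tfrac{w_p-\bar{w}_p}{2i}-w'\bar{w'})^{|K|-|I|}$ multiplied by a power of $|1+z_p|^2$. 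These extra $(1+z_p)$ factors are precisely what get absorbed into the conjugation by $V_{\nu+m}$: they shift the weight parameter on each block from $\nu+|I|$ to $\nu+|I|+(|K|-|I|)=\nu+|K|$ on the input side and analogously on the output side, which matches the shift already built into the ball formula. After these cancellations, the matrix entries take the claimed form with $(1-w\bar{w})$ replaced by $(\tfrac{w_p-\bar{w}_p}{2i}-w'\bar{w'})$ and with the Bergman-type Toeplitz operators now interpreted on the Siegel side.

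The main obstacle is purely bookkeeping: making sure the combinatorial signs $\varepsilon_{K\setminus I,I}\varepsilon_{K\setminus J,J}$ and the Gamma-function ratios $\Gamma(\nu+|I|-p)/\Gamma(\nu+|J|-p)$ survive the pullback unchanged. Since the signs come from the Grassmann reordering on the $\xi$-variables, and $\psi$ is linear and diagonal in the odd variables $\omega_k = i\xi_k/(1+z_p)$, the odd reordering is identical on both sides and the signs are preserved. The Gamma ratios come from the normalization constants of the weighted Bergman spaces on $\mathbb{B}^p$ and, by the definition of $V_{\nu+m}$ and the corollary after Theorem~\ref{unitary-B-S}, these same Gamma ratios appear on $\mathbb{U}^p$. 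Assembling these facts yields the stated matrix expression for $T_F^\nu$ on $H^2_\nu(\mathbb{U}^{p|q})$.
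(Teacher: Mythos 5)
Your proposal is correct and follows essentially the same route as the paper, which simply asserts the proposition as a consequence of the ball formula \eqref{Toeplitz-Ball-1} together with the unitary equivalence $U_\nu$ of Theorem~\ref{unitary-B-S}; your write-up supplies the details (the conjugation identity $U_\nu^* T_F^\nu U_\nu = T_{F\circ\psi}^\nu$, the blockwise reduction to the scalar operators $V_{\nu+m}$, and the use of Lemma~\ref{lema-d-p-form} to convert the weight factor) that the paper leaves implicit. The only point worth tightening is the bookkeeping of the powers of $(1+z_p)$ coming from the odd-coordinate transformation $\omega_k = i\xi_k/(1+z_p)$ in the pullback of the symbol components $F_{K\setminus I,K\setminus J}$, which must be combined with the $|1+z_p|^2$ factors from the weight to produce the exact shift $\nu+|I|\mapsto\nu+|K|$; you correctly identify the mechanism, so this is a matter of detail rather than a gap.
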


\begin{cor}\label{Toeplitz-Super-Cor}
Let $F$ be a super function of the form  $$F(z,\xi)= \sum_{M\subset Q} F_M(z) \xi_M \xi_M^*.$$ Then the super
Toeplitz operator $T_F^{\nu}$ on $H^2_{\nu}(\mathbb{B}^{p|q })$ (or $H^2_{\nu}(\mathbb{B}^{p|q })$)
is given by the $2^q \times 2^q$ diagonal-matrix
\begin{equation}\label{Toeplitz-Ball-2}
(T_F^{\nu})_{I,I}=  \sum_{I \subset K\subset Q}
 T_{\nu+|I|}^{\nu+|I|}\left( F_{K \setminus I} (w)  (1 -ww)^{|K|-|I|} \right),
\end{equation}
or
$$(T_F^{\nu})_{I,I}=  \sum_{I\subset K\subset Q}
 T_{\nu+|I|}^{\nu+|I|}\left( F_{K \setminus I} (w)  (\frac{ w_p-\bar{w_p}}{2i}-w'\bar{w'})^{|K|-|I|} \right).$$
\end{cor}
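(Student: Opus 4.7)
The plan is to obtain this corollary as an immediate specialization of the preceding proposition (and, for the ball case, of formula \eqref{Toeplitz-Ball-1} from \cite{LU}). The key observation is that the hypothesis on $F$ means that, in the general expansion $F = \sum_{I,J\subset Q} F_{I,J}\,\xi_I\xi_J^*$, the components $F_{I,J}$ vanish unless $I = J$, in which case $F_{I,I} = F_I$.

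First I would substitute this constraint into the general matrix formula for $(T_F^{\nu})_{I,J}$. The inner sum runs over $K$ with $I \cup J \subset K \subset Q$, and the only surviving terms are those where $F_{K\setminus I,\, K\setminus J} \neq 0$, i.e.\ those with $K\setminus I = K\setminus J$. Since both $I$ and $J$ are contained in $K$, we have $I = K\setminus(K\setminus I) = K\setminus(K\setminus J) = J$. Thus the matrix is diagonal: $(T_F^{\nu})_{I,J} = 0$ whenever $I \neq J$.

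For the diagonal entries, once $I = J$ we automatically have $K\setminus I = K\setminus J$, so $F_{K\setminus I,\, K\setminus J} = F_{K\setminus I}$. The sign factor collapses via $\varepsilon_{K\setminus I, I}\,\varepsilon_{K\setminus J, J} = \varepsilon_{K\setminus I, I}^2 = 1$, and the Gamma quotient $\Gamma(\nu + |I| - p)/\Gamma(\nu + |J| - p)$ equals $1$ because $|I| = |J|$. The operator $T_{\nu + |I|}^{\nu + |J|}$ likewise becomes $T_{\nu + |I|}^{\nu + |I|}$, a genuine Toeplitz operator on the single space $H^2_{\nu+|I|}(\mathbb{B}^p)$ (resp.\ $H^2_{\nu+|I|}(\mathbb{U}^p)$). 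Assembling these simplifications gives the stated diagonal expression, with weight $(1 - w\bar w)^{|K|-|I|}$ in the ball case and $\bigl(\tfrac{w_p - \bar w_p}{2i} - w'\bar w'\bigr)^{|K|-|I|}$ in the Siegel case, the latter coming from the corresponding weight in the proposition for $\mathbb{U}^{p|q}$.

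No step here is really an obstacle: the whole argument is a careful bookkeeping of which indices in the general formula can contribute. The only place where one must be slightly careful is the set-theoretic identity $K\setminus I = K\setminus J \Rightarrow I = J$ (valid because both $I,J \subset K$), which is what forces diagonality; once this is in hand, the simplification of the sign and Gamma factors is automatic.
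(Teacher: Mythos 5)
Your argument is correct and is exactly the specialization the paper intends: the corollary is stated without proof as an immediate consequence of formula \eqref{Toeplitz-Ball-1} and the preceding proposition, and your bookkeeping (only terms with $K\setminus I = K\setminus J$ survive, which forces $I=J$ since $I,J\subset K$, after which the sign factors square to $1$ and the Gamma quotient is trivial) is precisely the reasoning that justifies it. Nothing is missing.
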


\section{Super Toeplitz operators with special symbols}
\label{sec:commToeplitz}

\subsection{Quasi-elliptic}

We will call  a super function $F$ quasi-elliptic if it is invariant under the action of the  quasi-elliptic group, in other words, when $F$ has the following form
$$F(z,\xi)=\sum_{I\subset Q} F_I(r)\xi_I \xi_I^*,$$
where $r = (r_1 , ..., r_p ) = (|z_1 |, \ldots, |z_p |)$.

\begin{thm}
\label{thm:q-elliptic}
 Let $F$ be a bounded measurable quasi-elliptic super function. Then
the Toeplitz operator $T_F^{\nu}$ acting on $H^2_{\nu}(\mathbb{B}^{p|q })$ is unitarily equivalent to the multiplication
operator $\gamma_{F,\nu} I = R_{\nu,(p|q)} T_F^{\nu} R_{\nu,(p|q)}^*$ acting on $(l_2(\mathbb{Z}_+^p))^{2^q}$, where $R_{\nu,(p|q)}$ and $R_{\nu,(p|q)}^*$
 are given in Definition \ref{SBT-QE}. The sequence
$$\gamma_{F,\nu}= \left(\{\gamma_{F,\nu}(n,M)\}_{n\in\mathbb{Z}^p_+}\right)_{M\subset Q}$$
is given by
\begin{align*}
 \gamma_{F,\nu}(n,M)=  \frac{ 2^p \Gamma(|n|+\nu+|M|)}{n! \Gamma(\nu+|M|-p)}    \sum_{ K\in I_M}
\int_{\tau( \mathbb{B}^p)}   F_{K \setminus M} (r)    r^{2n} (1-r^2)^{\nu+|K|-p-1} r dr,
\end{align*}
where $\mathcal{I}_M=\{K\subset Q: M\subset K\}$.
\end{thm}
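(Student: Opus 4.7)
The plan is to combine three ingredients: the matrix form of quasi-elliptic super Toeplitz operators from Corollary~\ref{Toeplitz-Super-Cor}; the block-diagonal structure of the super Bargmann transform $R_{\nu,(p|q)}$ visible in Definition~\ref{SBT-QE}; and the classical scalar result of \cite{QV2} that the conjugation of a radial Toeplitz operator on $H^2_{\nu+|M|}(\mathbb{B}^p)$ by $R_{\nu+|M|}$ is a multiplication operator on $l_2(\mathbb{Z}_+^p)$.

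First I would note that a quasi-elliptic symbol has, by definition, the diagonal Grassmann shape $F(z,\xi)=\sum_{K\subset Q} F_K(r)\,\xi_K\xi_K^*$. So Corollary~\ref{Toeplitz-Super-Cor} applies directly: with respect to the decomposition
\[
H^2_{\nu}(\mathbb{B}^{p|q}) = \bigoplus_{M\subset Q} H^2_{\nu+|M|}(\mathbb{B}^{p})\otimes\xi_M,
\]
the matrix of $T_F^{\nu}$ is diagonal with $(M,M)$-entry
\[
(T_F^\nu)_{M,M} = \sum_{K\in\mathcal{I}_M} T_{\nu+|M|}^{\nu+|M|}\!\bigl(F_{K\setminus M}(w)(1-w\bar w)^{|K|-|M|}\bigr).
\]

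Next, by Definition~\ref{SBT-QE} the super Bargmann transform respects this decomposition: on the $M$-th block it acts as the scalar Bargmann transform $R_{\nu+|M|}$ of \cite{QV2}. Hence
\[
R_{\nu,(p|q)}\,T_F^{\nu}\,R_{\nu,(p|q)}^* = \bigoplus_{M\subset Q} R_{\nu+|M|}\,(T_F^\nu)_{M,M}\,R_{\nu+|M|}^*,
\]
and the problem reduces to analyzing $R_{\nu+|M|} T_g^{\nu+|M|} R_{\nu+|M|}^*$ for each radial symbol of the form $g(r) = F_{K\setminus M}(r)(1-r^2)^{|K|-|M|}$. The classical computation in the orthonormal monomial basis of $H^2_{\nu+|M|}(\mathbb{B}^p)$, using the normalization $\|z^n\|^2 = n!\Gamma(\nu+|M|)/\Gamma(|n|+\nu+|M|)$ together with the factor $\bigl((2\pi)^p\Gamma(|n|+\nu+|M|)/(n!\Gamma(\nu+|M|))\bigr)^{1/2}$ built into $R_{0,\nu+|M|}$ and the constant $c_{\nu+|M|}=\Gamma(\nu+|M|)/(\pi^p\Gamma(\nu+|M|-p))$, shows that this conjugation is multiplication on $l_2(\mathbb{Z}_+^p)$ by the sequence
\[
n\longmapsto \frac{2^p\,\Gamma(|n|+\nu+|M|)}{n!\,\Gamma(\nu+|M|-p)}\int_{\tau(\mathbb{B}^p)} g(r)\,r^{2n}(1-r^2)^{\nu+|M|-p-1}\,r\,dr.
\]

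Substituting $g(r)=F_{K\setminus M}(r)(1-r^2)^{|K|-|M|}$ collapses the exponent of $(1-r^2)$ to $\nu+|K|-p-1$, and summing over $K\in\mathcal{I}_M$ yields the claimed formula for $\gamma_{F,\nu}(n,M)$. The argument is essentially bookkeeping once the classical scalar result of \cite{QV2} is invoked; the one step that genuinely requires care is the tracking of constants, where the factor $2^p = (2\pi)^p/\pi^p$ arises from combining $c_{\nu+|M|}$ in the measure $d\mu_{\nu+|M|}$ with the $(2\pi)^p$ produced by integrating out the torus variables during the polar reduction on $\mathbb{B}^p$. This must be verified uniformly in $M$ so that a single factor $2^p\Gamma(|n|+\nu+|M|)/(n!\Gamma(\nu+|M|-p))$ can be pulled out of the sum over $K$.
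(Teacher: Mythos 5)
Your proposal is correct and follows essentially the same route as the paper's proof: reduce to the diagonal blocks via Corollary~\ref{Toeplitz-Super-Cor} and the block structure of $R_{\nu,(p|q)}$, invoke the scalar multiplication-operator result of \cite{QV2} (Theorem~10.1 there) for each radial symbol $F_{K\setminus M}(r)(1-r^2)^{|K|-|M|}$ at weight $\nu+|M|$, and collapse the exponent of $(1-r^2)$ to $\nu+|K|-p-1$ before summing over $K\in\mathcal{I}_M$. Your accounting of the constant $2^p$ as the ratio of the $(2\pi)^p$ in the normalization of $R_{0,\nu+|M|}$ to the $\pi^p$ in $c_{\nu+|M|}$ is consistent with the factor $l_{\nu+|M|}(n)$ appearing in the paper.
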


\begin{proof}
For every super function $F$ the Toeplitz operator is unitarily equivalent to
$$R_{\nu,(p|q)} T_F^{\nu} R_{\nu,(p|q)}^*: (l_2(\mathbb{Z}_+^p))^{2^q}\rightarrow (l_2(\mathbb{Z}_+^p))^{2^q},$$
where the components of the operator are
\begin{equation}\label{STE-QE}
 (R_{\nu,(p|q)} T_F^{\nu} R_{\nu,(p|q)}^*)_{I,J}=  R_{\nu+|I|} (T_F^{\nu})_{I,J} R^*_{\nu+|J|}.
\end{equation}

Now, we consider a super function of the form
$F=\sum_{I\subset Q} F_I(r)\xi_I \xi_I^*$ and using corollary
 \ref{Toeplitz-Super-Cor} we have that the Toeplitz operator has the form
\begin{equation}
 (T_F^{\nu})_{I,J}=\left\{ \begin{array}{cc}
                     0 & I\neq J\\
\sum_{ K\in \mathcal{I}_I}
 T_{\nu+|I|}^{\nu+|I|}\left( F_{K \setminus I} (r)    (1 -|r|^2)^{|K|-|I|}  \right)& I=J
                    \end{array}  \right.
 \end{equation}

From the above equation and \ref{STE-QE} we have

\begin{align*}
 & (R_{\nu,(p|q)} T_F^{\nu} R_{\nu,(p|q)}^*)_{I,J}\\
 &= \left\{\begin{array}{cc}
                     0 & I\neq J\\
\sum_{K\in \mathcal{I}_I}
 R_{\nu+|I|} P_{\nu+|I|}   \left( F_{K \setminus I} (r)    (1 -|r|^2)^{|K|-|I|}\right) P_{\nu+|I|} R^*_{\nu+|I|}  & I=J
                    \end{array}  \right. .
 \end{align*}
By theorem 10.1 in \cite{QV2} the above operators are  multiplication operators then
\begin{align*}
 & \sum_{K\in \mathcal{I}_I} R_{\nu+|I|} P_{\nu+|I|}   \left( F_{K \setminus I} (r)    (1 -|r|^2)^{|K|-|I|}\right) P_{\nu+|I|} R^*_{\nu+|I|}\left(\left\{ c_{n,I}  \right\}_{n\in\mathbb{Z}_+^p}\right)\\
=& \left\{ \left[  l_{\nu+|I|}(n)  \sum_{K\in \mathcal{I}_I}
\int_{\tau( \mathbb{B}^p)} \left(  F_{K \setminus I} (r)  (1 -r^2)^{|K|-|I|} \right) \right.\right.\\
 &\times\left. \left. r^{2n} (1-r^2)^{\nu+|I|-p-1} r dr \right]c_{n,I}  \right\}_{n\in\mathbb{Z}_+^p}\\
=& \left\{ \left[ l_{\nu+|I|}(n)  \sum_{K\in \mathcal{I}_I}
\int_{\tau( \mathbb{B}^p)}  F_{K \setminus I} (r)   r^{2n} (1-r^2)^{\nu+|K|-p-1} r dr \right]c_{n,I}  \right\}_{n\in\mathbb{Z}_+^p},
\end{align*}
where
$$l_{\nu+|I|}(n)=\frac{ 2^p \Gamma(|n|+\nu+|I|)}{n! \Gamma(\nu+|I|-p)}.$$

\end{proof}

\subsection{Quasi-parabolic case}

We will call  a super function $F$ quasi-parabolic if it is invariant under the action of the  quasi-parabolic group, in other words, when $F$
has the following form
$$F(z,\xi)=\sum_{I\subset Q} f_I(r, \text{Im }(z_n))\xi_I \xi_I^*,$$
where $r = (r_1 , ..., r_{p-1} ) = (|z_1 |, \ldots, |z_{p-1} |)$.

\begin{thm}
\label{thm:q-parabolic}
 Let $F$ be a bounded measurable quasi-parabolic super function. Then
the Toeplitz operator $T_F^{\nu}$ acting on $H^2_{\nu}(\mathbb{D}^{p|q })$ is unitarily equivalent to the multiplication
operator $\gamma_{F,\nu} I = R_{\nu,(p|q)} T_F^{\nu} R_{\nu,(p|q)}^*$ acting on \\ $(l_2(\mathbb{Z}_+^{p-1}, L_2(\mathbb{R}_+)))^{2^q}$, where $R_{\nu,(p|q)}$ and $R_{\nu,(p|q)}^*$
 are given in Definition \ref{SBT-QP}. The sequence
$$\gamma_{F,\nu}= \left(\{\gamma_{F,\nu}(n,\xi,M)\}_{n\in\mathbb{Z}^{p-1}_+}\right)_{M\subset Q}$$ is given by
\begin{align*}
\gamma_{F,\nu}(n,\xi,M)=&
     \frac{ (2\xi)^{|n|+\nu+|M|-1}}{n! \Gamma(\nu+|M|-p)}  \\
     &\times  \sum_{K\in \mathcal{I}_M}
 \int_{ \mathbb{R}^p_+}
  F_{K \setminus M} (\sqrt{r'},v+\hat{r})
r^{n} e^{-2\xi(v+\hat{r})} v^{\nu+|K|-p-1} drdv,
\end{align*}
where $r=(r_1,\ldots,r_{p-1})$, $\sqrt{r}=(\sqrt{r}_1,\ldots,\sqrt{r}_{p-1})$, $\hat{r}=r_1+\cdots+r_{p-1}$ and $\mathcal{I}_M=\{K\subset Q: M\subset K\}$.
\end{thm}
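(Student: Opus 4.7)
My plan is to follow essentially the same strategy used in the proof of Theorem~\ref{thm:q-elliptic}, reducing the super Toeplitz operator to a diagonal array of classical Toeplitz operators on $H^2_{\nu+|M|}(\mathbb{U}^p)$ and then invoking the known analysis of quasi-parabolic-invariant symbols from \cite{QV2}.

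First, I would observe that since $F = \sum_{M\subset Q} F_M(r,\Ima z_n)\, \xi_M \xi_M^*$ has the special diagonal form required by Corollary~\ref{Toeplitz-Super-Cor}, the super Toeplitz operator $T_F^\nu$ acts as a diagonal $2^q\times 2^q$ matrix on $H^2_\nu(\mathbb{U}^{p|q})$, with
\[
    (T_F^\nu)_{M,M} = \sum_{K\in \mathcal{I}_M} T^{\nu+|M|}_{\nu+|M|}\!\left(F_{K\setminus M}(r,\Ima w_n)\left(\tfrac{w_p-\overline{w_p}}{2i}-w'\bar{w'}\right)^{|K|-|M|}\right).
\]
Next, because $R_{\nu,(p|q)}$ and $R^*_{\nu,(p|q)}$ are assembled blockwise from the classical transforms $R_{\nu+|M|}$, conjugation by them reduces to studying $R_{\nu+|M|}\,(T_F^\nu)_{M,M}\,R^*_{\nu+|M|}$ on each summand $l_2(\Z^{p-1}_+,L_2(\R_+))$.

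Now each symbol appearing on the right-hand side above is quasi-parabolic invariant on the classical Siegel domain $\mathbb{U}^p$, since $F_{K\setminus M}$ depends only on $(|w_1|,\ldots,|w_{p-1}|,\Ima w_n)$ and the defining function $\Ima w_n - |w'|^2$ is itself invariant under the quasi-parabolic action. Consequently, the relevant theorem of \cite{QV2} (the quasi-parabolic analogue of the result cited in the proof of Theorem~\ref{thm:q-elliptic}) applies and tells us that
\[
    R_{\nu+|M|}\, T^{\nu+|M|}_{\nu+|M|}(g)\, R^*_{\nu+|M|}
\]
is the multiplication operator on $l_2(\Z^{p-1}_+,L_2(\R_+))$ whose multiplier is an explicit integral over $\tau(\B^{p-1})\times \R_+$ against the Laguerre-type weight $r^{2n} e^{-2\xi(v+|r|^2)} v^{\nu+|M|-p-1}$ coming from $R_{\nu+|M|}^*R_{\nu+|M|}$.

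Finally, I would carry out the change of variables $r_k = |w_k|^2$ for $k=1,\ldots,p-1$ and $v = \Ima w_n - |w'|^2$ in this multiplier, so that $|w'|^2 = \hat r = r_1 + \cdots + r_{p-1}$ and $\Ima w_n = v + \hat r$. Under this substitution, the classical quasi-parabolic integral becomes an integral over $\R^p_+$ against $r^n e^{-2\xi(v+\hat r)}$, and the extra factor $(\Ima w_n - |w'|^2)^{|K|-|M|} = v^{|K|-|M|}$ coming from the super part combines with $v^{\nu+|M|-p-1}$ to produce $v^{\nu+|K|-p-1}$; summing over $K\in \mathcal{I}_M$ yields precisely the claimed formula for $\gamma_{F,\nu}(n,\xi,M)$. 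The only nontrivial step is matching the normalizing constant $(2\xi)^{|n|+\nu+|M|-1}/(n!\,\Gamma(\nu+|M|-p))$ produced by the operator $R_{\nu+|M|}^*R_{\nu+|M|}$ with the one written in the statement, which I expect to be the main bookkeeping obstacle but is otherwise routine once the constants from Definition~\ref{SBT-QP} are tracked carefully.
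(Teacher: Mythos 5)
Your proposal matches the paper's proof essentially step for step: it invokes Corollary~\ref{Toeplitz-Super-Cor} to diagonalize $T_F^\nu$, reduces the conjugation by $R_{\nu,(p|q)}$ to blockwise conjugation by the classical $R_{\nu+|M|}$, applies the quasi-parabolic multiplication-operator theorem of \cite{QV2} (Theorem~10.2 there, as the paper also cites), and absorbs the factor $v^{|K|-|M|}$ into the weight to produce $v^{\nu+|K|-p-1}$. The approach and all key ingredients coincide with the paper's argument.
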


\begin{proof}
For every super function $F$ the Toeplitz operator is unitarily equivalent to
$$R_{\nu,(p|q)} T_F^{\nu} R_{\nu,(p|q)}^*: (l_2(\mathbb{Z}_+^{p-1}, L_2(\mathbb{R}_+)))^{2^q}\rightarrow (l_2(\mathbb{Z}_+^{p-1}, L_2(\mathbb{R}_+)))^{2^q},$$
where the components of the operator are
\begin{equation}\label{Toeplitz-equivalent-parabolic}
 (R_{\nu,(p|q)} T_F^{\nu} R_{\nu,(p|q)}^*)_{I,J}=  R_{\nu+|I|} (T_F^{\nu})_{I,J} R^*_{\nu+|J|}.
\end{equation}

Now, we consider a super function
$F=\sum_{I\subset Q} F_I(r,  \text{Im}(z_n)  )\xi_I \xi_I^*$
 and using corollary
 \ref{Toeplitz-Super-Cor} we obtain
\begin{align*}
 (T_F^{\nu})_{I,J}=\left\{ \begin{array}{cc}
                     0 & I\neq J\\
\sum_{K\in \mathcal{I}_I}
 T_{\nu+|I|}^{\nu+|I|}\left( F_{K \setminus I} (r,v+\hat{r})    v^{|K|-|I|}  \right)& I=J
                    \end{array}  \right..
 \end{align*}

From the above equation and \ref{Toeplitz-equivalent-parabolic} we have

\begin{align*}
 & (R_{\nu,(p|q)} T_F^{\nu} R_{\nu,(p|q)}^*)_{I,J}\\
 &= \left\{\begin{array}{cc}
                     0 & I\neq J\\
\sum_{K\in \mathcal{I}_I}
 R_{\nu+|I|} P_{\nu+|I|}   \left( F_{K \setminus I} (r,v+\hat{r})    (v)^{|K|-|I|}\right) P_{\nu+|I|} R^*_{\nu+|I|}  & I=J
                    \end{array}  \right..
 \end{align*}
By Theorem 10.2 in \cite{QV2} the above operators are multiplication operators
\begin{align*}
 & \sum_{K\in \mathcal{I}_I} R_{\nu+|I|} P_{\nu+|I|}   \left(F_{K \setminus I} (r',v+\hat{r})    (v)^{|K|-|I|}\right)\\
  & \times P_{\nu+|I|} R^*_{\nu+|I|}\left(\left\{ c_{n,I}(\xi)  \right\}_{n\in\mathbb{Z}_+^{p-1}}\right)\\
=& \left\{
\left[ \sum_{K\in \mathcal{I}_I} \frac{ (2\xi)^{|n|+\nu+|I|-1}}{n! \Gamma(\nu+|M|-p)}
 \int_{ \mathbb{R}^p_+}
  \left(F_{K \setminus I} (\sqrt{r'},v+\hat{r})v^{|K|-|I|}\right)
  \right.\right.\\
& \times\left.\left.
r^{n} e^{-2\xi(v+\hat{r})} v^{\nu+|I|-p-1} drdv \right]
c_{n,I}(\xi)  \right\}_{n\in\mathbb{Z}_+^{p-1}}\\
=& \left\{ \left[ \frac{ (2\xi)^{|n|+\nu+|I|-1}}{n! \Gamma(\nu+|M|-p)}   \sum_{K\in \mathcal{I}_I}
 \int_{ \mathbb{R}^p_+}
  F_{K \setminus I} (\sqrt{r'},v+\hat{r})
  \right.\right.\\
& \times\left.\left.
r^{n} e^{-2\xi(v+\hat{r})} v^{\nu+|K|-p-1} drdv\right]c_{n,I}(\xi)  \right\}_{n\in\mathbb{Z}_+^{p-1}}.
\end{align*}

\end{proof}

\subsection{Nilpotent case}

We will call  a super function $F$ nilpotent if it is invariant under the action of the  Nilpotent group, in other words, when $F$
has the following form
$$F(z,\xi)=\sum_{I\subset Q} f_I(\Ima z',\Ima z_n - |z'|^2) \xi_I \xi_I^*,$$
where $ z'=(z_1 , ..., z_{p-1} )$.

\begin{thm}
\label{thm:nilpotent}
 Let $F$ be a bounded measurable nilpotent super function. Then
the Toeplitz operator $T_F^{\nu}$ acting on $H^2_{\nu}(\mathbb{U}^{p|q })$ is unitarily equivalent to the multiplication
operator $\gamma_{F,\nu} I = R_{\nu,(p|q)} T_F^{\nu} R_{\nu,(p|q)}^*$ acting on $(L_2(\R^{p-1} \times \R_+))^{2^q}$, where $R_{\nu,(p|q)}$ and $R_{\nu,(p|q)}^*$
 are given in Definition \ref{SBT-N}. The function
$$\gamma_{F,\nu}= \left( \gamma_{F,\nu}(u',\xi,M)   \right)_{M\subset Q}$$ is given by
\begin{align*}
\gamma_{F,\nu}(u',\xi,M)=&
     \frac{(2\xi)^{\nu+|M|-p}} {\pi^{\frac{p-1}{2}}\Gamma(\nu+|M|-p)}
     \sum_{K\in \mathcal{I}_M}
\int_{\R^{n-1} \times \R_+}  \\
 &\times F_{K \setminus M}(\frac{1}{2\sqrt{\xi}}(-u'+v'),v) e^{-2\xi v -|v'|^2}\,v^{\nu+|K|-p-1}dv'dv,
\end{align*}
where $\mathcal{I}_M=\{K\subset Q: M\subset K\}$,  $u' \in \R^{n-1}$ and $\xi \in \R_+$.
\end{thm}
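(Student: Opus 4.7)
The plan is to mirror the structure of the proofs of Theorems~\ref{thm:q-elliptic} and \ref{thm:q-parabolic}. The starting observation is that for any super function $F$ the conjugation
\[
R_{\nu,(p|q)}\, T_F^{\nu}\, R_{\nu,(p|q)}^* : (L_2(\R^{p-1}\times\R_+))^{2^q}\longrightarrow (L_2(\R^{p-1}\times\R_+))^{2^q}
\]
has matrix components
\[
    \bigl(R_{\nu,(p|q)}\, T_F^{\nu}\, R_{\nu,(p|q)}^*\bigr)_{I,J} = R_{\nu+|I|}\,(T_F^{\nu})_{I,J}\, R^*_{\nu+|J|},
\]
by the very definition of $R_{\nu,(p|q)}$ in Definition~\ref{SBT-N}, which applies $R_{\nu+|M|}$ to the $\xi_M$-component.

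Next I would exploit the nilpotent invariance. Since $F$ has the form $\sum_{I\subset Q} F_I(\Ima z', \Ima z_p-|z'|^2)\,\xi_I\xi_I^*$, Corollary~\ref{Toeplitz-Super-Cor} in its Siegel-domain version yields the diagonal matrix
\[
(T_F^{\nu})_{I,J}=\begin{cases} 0, & I\neq J,\\[2pt]
\displaystyle\sum_{K\in\mathcal{I}_I} T_{\nu+|I|}^{\nu+|I|}\!\left(F_{K\setminus I}(\Ima z',\Ima z_p-|z'|^2)\,(\Ima z_p-|z'|^2)^{|K|-|I|}\right), & I=J.
\end{cases}
\]
Hence the conjugated operator is diagonal, with $(I,I)$-entry obtained by sandwiching each term $T_{\nu+|I|}^{\nu+|I|}(\cdots)$ between $R_{\nu+|I|}$ and $R^*_{\nu+|I|}$.

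The key step is then to invoke the classical nilpotent result from \cite{QV2} (the analogue of Theorems~10.1 and 10.2 used in the previous two proofs): the operator $R_{\nu+|I|} P_{\nu+|I|} M_g P_{\nu+|I|} R^*_{\nu+|I|}$ is a multiplication operator on $L_2(\R^{p-1}\times\R_+)$ whenever $g$ is invariant under the nilpotent group, and its multiplier is given by an explicit integral with the weight $e^{-2\xi v-|v'|^2}\,v^{\nu+|I|-p-1}$ coming from the Bargmann kernel and the Siegel measure. Plugging $g(z)=F_{K\setminus I}(\Ima z',\Ima z_p-|z'|^2)(\Ima z_p-|z'|^2)^{|K|-|I|}$, performing the change of variables that turns $\Ima z'$ into $\tfrac{1}{2\sqrt{\xi}}(-u'+v')$ and absorbing the extra factor $(\text{Im}z_p-|z'|^2)^{|K|-|I|}=v^{|K|-|I|}$ into the exponent of $v$ yields the advertised formula for $\gamma_{F,\nu}(u',\xi,M)$, with $v^{\nu+|I|-p-1}v^{|K|-|I|}=v^{\nu+|K|-p-1}$.

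The main obstacle, as in the previous two proofs, is not conceptual but bookkeeping: one must carefully match the normalization constants in $R_{\nu+|M|}$ (in particular the factor $\tfrac{(2\xi)^{\nu+|M|-p}}{\pi^{(p-1)/2}\Gamma(\nu+|M|-p)}$) with the integral formula inherited from the scalar case, and verify that after the change of variables the integrand depends only on $u'$, $\xi$ and the integration variables $v'$, $v$, so that the output is indeed a pure multiplication operator on $L_2(\R^{p-1}\times\R_+)$. Apart from this, the proof is a direct transcription of the argument used for the quasi-elliptic and quasi-parabolic cases.
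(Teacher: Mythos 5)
Your proposal follows essentially the same route as the paper's proof: decompose the conjugated operator into components $R_{\nu+|I|}(T_F^{\nu})_{I,J}R^*_{\nu+|J|}$, use Corollary~\ref{Toeplitz-Super-Cor} to see that the nilpotent-invariant symbol yields a diagonal matrix with the factor $v^{|K|-|I|}$, and then invoke the scalar nilpotent result (Theorem~10.3 of \cite{QV2}) to obtain the multiplication operator, absorbing $v^{|K|-|I|}$ into the weight to get $v^{\nu+|K|-p-1}$. The argument is correct and matches the paper's proof step for step.
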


\begin{proof}
For every super function $F$ the Toeplitz operator is unitarily equivalent to
$$R_{\nu,(p|q)} T_F^{\nu} R_{\nu,(p|q)}^*:(L_2(\R^{p-1} \times \R_+))^{2^q}\rightarrow (L_2(\R^{p-1} \times \R_+))^{2^q},$$
where the components of the operator are
\begin{equation}\label{Toeplitz-equivalent-nil}
 (R_{\nu,(p|q)} T_F^{\nu} R_{\nu,(p|q)}^*)_{I,J}=  R_{\nu+|I|} (T_F^{\nu})_{I,J} R^*_{\nu+|J|}.
\end{equation}

Now, we consider a super function of the form
$$F=\sum_{I\subset Q} F_I(\Ima z',\Ima z_n - |z'|^2) \xi_I \xi_I^*.$$
Using \refeq{Toeplitz-Super-Cor} we obtain
\begin{equation}
 (T_F^{\nu})_{I,J}=\left\{ \begin{array}{cc}
                     0 & I\neq J\\
\sum_{K\in \mathcal{I}_I}
 T_{\nu+|I|}^{\nu+|I|}\left( F_{K \setminus I} (\Ima z',\Ima z_n - |z'|^2)  v^{|K|-|I|}  \right)& I=J
                    \end{array}  \right..
 \end{equation}

From the above equation and \ref{Toeplitz-equivalent-nil} we have

\begin{align*}
&  (R_{\nu,(p|q)} T_F^{\nu} R_{\nu,(p|q)}^*)_{I,J}\\
 &= \left\{\begin{array}{cc}
                     0 & I\neq J\\
\sum_{K\in \mathcal{I}_I}
 R_{\nu+|I|} P_{\nu+|I|}   \left( F_{K \setminus I} (y',v)    (v)^{|K|-|I|}\right) P_{\nu+|I|} R^*_{\nu+|I|}  & I=J
                    \end{array}  \right..
 \end{align*}
By theorem 10.3 in \cite{QV2} the above operators are the multiplication operators
\begin{align*}
  &\sum_{K\in \mathcal{I}_I} R_{\nu+|I|} P_{\nu+|I|}   \left(F_{K \setminus I} (y',v)    (v)^{|K|-|I|}\right) P_{\nu+|I|} R^*_{\nu+|I|} \left(  c_{I}(u', \xi)  \right)\\
=&
\left[ \sum_{K\in \mathcal{I}_I}
\frac{(2\xi)^{\nu+|M|-p}} {\pi^{\frac{p-1}{2}}\Gamma(\nu+|M|-p)}
\int_{\R^{n-1} \times \R_+}  F_{K \setminus M}(\frac{1}{2\sqrt{\xi}}(-u'+v'),v)\right.\\
&\times \left.
 e^{-2\xi v -|v'|^2}\,v^{\nu+|K|-p-1}dv'dv
 \right]
c_{I}(u',\xi) .
\end{align*}

\end{proof}

\subsection{Quasi-nilpotent case}

We will call  a super function $F$ quasi-nilpotent if it is invariant under the action of the  quasi-nilpotent group, in other words, when $F$
has the following form
$$F(z,\xi)=\sum_{I\subset Q} f_I(r, y',\Ima z_n - |z'|^2) \xi_I \xi_I^*,$$
where   $r=(|z_1|,...,|z_k|)$, $y' = \Ima w'$, and $w'=(z_{k+1},...,z_{p-1})$.

\begin{thm}
\label{thm:q-nilpotent}
 Let $F$ be a bounded measurable quasi-nilpotent super function. Then
the Toeplitz operator $T_F^{\nu}$ acting on $H^2_{\nu}(\mathbb{U}^{p|q })$ is unitarily equivalent to the multiplication
operator $\gamma_{F,\nu} I = R_{\nu,(p|q)} T_F^{\nu} R_{\nu,(p|q)}^*$ acting on $(l_2(\Z^k_+,L_2(\R^{p-k-1} \times \R_+)))^{2^q}$, where $R_{\nu,(p|q)}$ and $R_{\nu,(p|q)}^*$
 are given in Definition \ref{SBT-QN}. The function
$$\gamma_{F,\nu}= \left( \{ \gamma_{F,\nu}(n,u',\xi,M)\}_{n \in \Z_+^k}   \right)_{M\subset Q}$$ is given by
\begin{align*}
 \gamma_{F,\nu}(n,u',\xi,M)
 =& \pi^{-\frac{p-k-1}{2}}\,\frac{(2\xi)^{|n|+\nu+|M|-p +k}}{n!\, \Gamma(\nu+|M|-p)}
     \sum_{K\in \mathcal{I}_M}     \int_{\R^k_+\times \R^{n-k-1} \times \R_+}\\
     &\times F_{K \setminus M}(\sqrt{r},\frac{1}{2\sqrt{\xi}}(-u'+v'),v+r_1+...+r_k)
 r^p\\
 &\times  e^{-2\xi(v+r_1+...+r_k)-|v'|^2}\,v^{\nu+|K|-p-1}drdv'dv,
\end{align*}
where $\mathcal{I}_M=\{K\subset Q: M\subset K\}$,  $\sqrt{r}=(\sqrt{r}_1,...,\sqrt{r}_k)$, $n \in \Z_+^k$, $u' \in \R^{n-k-1}$ and $\xi \in \R_+$.
\end{thm}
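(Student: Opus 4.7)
The proof will follow exactly the pattern set by Theorems~\ref{thm:q-elliptic}, \ref{thm:q-parabolic} and \ref{thm:nilpotent}, so I will only sketch the steps, keeping the quasi-nilpotent-specific bookkeeping explicit.

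First, by the very definition of $R_{\nu,(p|q)}$ and $R_{\nu,(p|q)}^*$ (Definition~\ref{SBT-QN}), which decompose diagonally with respect to the splitting $\Psi=\sum_{M\subset Q}\psi_M\xi_M$, the block-components of the conjugate operator are
\[
    (R_{\nu,(p|q)}T_F^\nu R_{\nu,(p|q)}^*)_{I,J}=R_{\nu+|I|}\,(T_F^\nu)_{I,J}\,R_{\nu+|J|}^*.
\]
Because the given symbol
$F(z,\xi)=\sum_{I\subset Q}f_I(r,y',\Ima z_p-|z'|^2)\,\xi_I\xi_I^*$
is of the type treated in Corollary~\ref{Toeplitz-Super-Cor}, the matrix $(T_F^\nu)_{I,J}$ is diagonal, and its $I$-th entry equals
\[
    \sum_{K\in\mathcal{I}_I}T_{\nu+|I|}^{\nu+|I|}\!\left(f_{K\setminus I}(r,y',v)\,v^{|K|-|I|}\right),
\]
where $v=\Ima z_p-|z'|^2$. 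This reduces the claim to a statement about each scalar-valued Toeplitz operator $T_{\nu+|I|}^{\nu+|I|}(\cdot)$ on the ordinary weighted Bergman space $H^2_{\nu+|I|}(\mathbb{U}^p)$, with a symbol that is still quasi-nilpotent in the classical sense (invariance under the base quasi-nilpotent action depends only on the variables $r,y',v$, which are preserved by multiplying by $v^{|K|-|I|}$).

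The second step is to invoke the classical result of \cite{QV2} (the quasi-nilpotent analogue of the computations cited in the proofs of the three preceding theorems; it is Theorem~10.4 of that paper). It states that for a bounded measurable quasi-nilpotent symbol $G$ on $\mathbb{U}^p$, the operator $R_{\mu}T^{\mu}_G R_{\mu}^*$ is the multiplication operator with multiplier
\[
    \pi^{-\frac{p-k-1}{2}}\,\frac{(2\xi)^{|n|+\mu-p+k}}{n!\,\Gamma(\mu-p)}\int_{\R^k_+\times\R^{p-k-1}\times\R_+} G\,r^n e^{-2\xi(v+r_1+\cdots+r_k)-|v'|^2}v^{\mu-p-1}\,dr\,dv'\,dv,
\]
where $G$ is evaluated at the canonically transformed point $(\sqrt{r},\frac{1}{2\sqrt{\xi}}(-u'+v'),v+r_1+\cdots+r_k)$. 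Applying this with $\mu=\nu+|I|$ and $G=f_{K\setminus I}\cdot v^{|K|-|I|}$ yields the extra factor $v^{|K|-|I|}$ which, when absorbed into the weight, shifts the exponent of $v$ from $\nu+|I|-p-1$ to $\nu+|K|-p-1$. Summing over $K\in\mathcal{I}_I$ gives precisely the stated multiplier $\gamma_{F,\nu}(n,u',\xi,I)$.

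The only genuinely delicate point, and hence the expected main obstacle, is verifying that the bookkeeping on weights and normalizing constants really does telescope as claimed when one passes from the weight $\nu+|I|$ (carried by the classical Bargmann-type transform $R_{\nu+|I|}$) to the weight $\nu+|K|$ (appearing inside the final integral), and that the substitution of variables built into the definition of $R_{\nu+|I|}$ is compatible with the quasi-nilpotent invariance of each $f_{K\setminus I}$ in the correct coordinates $(r,y',v)$. Everything else is the mechanical repetition of the argument in Theorems~\ref{thm:q-elliptic}--\ref{thm:nilpotent}, and we carry it out only to the extent needed to identify the constants.
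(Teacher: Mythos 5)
Your proposal is correct and follows essentially the same route as the paper's own proof: reduce to the diagonal blocks via Definition~\ref{SBT-QN} and Corollary~\ref{Toeplitz-Super-Cor}, apply Theorem~10.4 of \cite{QV2} with weight $\nu+|I|$ to the symbol $f_{K\setminus I}\,v^{|K|-|I|}$, absorb that factor to shift the exponent of $v$ to $\nu+|K|-p-1$, and sum over $K\in\mathcal{I}_I$. The ``delicate point'' you flag about the weight bookkeeping is handled in the paper exactly as you describe, so no further argument is needed.
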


\begin{proof}
For every super function $F$ the Toeplitz operator is unitarily equivalent to
$$R_{\nu,(p|q)} T_F^{\nu} R_{\nu,(p|q)}^*: (l_2(\Z^k_+,L_2(\R^{p-k-1} \times \R_+)))^{2^q}\rightarrow (l_2(\Z^k_+,L_2(\R^{p-k-1} \times \R_+)))^{2^q},$$
where the components of the operator are
\begin{equation}\label{Toeplitz-equivalent-qnil}
 (R_{\nu,(p|q)} T_F^{\nu} R_{\nu,(p|q)}^*)_{I,J}=  R_{\nu+|I|} (T_F^{\nu})_{I,J} R^*_{\nu+|J|}.
\end{equation}

Now, we consider a super function of the form
$F=\sum_{I\subset Q} F_I(r, y',\Ima z_n - |z'|^2) \xi_I \xi_I^*$ and using cororally
 \ref{Toeplitz-Super-Cor} we obtain
\begin{equation}
 (T_F^{\nu})_{I,J}=\left\{ \begin{array}{cc}
                     0 & I\neq J\\
\sum_{K\in \mathcal{I}_I}
 T_{\nu+|I|}^{\nu+|I|}\left( F_{K \setminus I} (r, y',\Ima z_n - |z'|^2)  v^{|K|-|I|}  \right)& I=J
                    \end{array}  \right..
 \end{equation}

From the above equation and \ref{Toeplitz-equivalent-qnil} we have

\begin{align*}
 & (R_{\nu,(p|q)} T_F^{\nu} R_{\nu,(p|q)}^*)_{I,J}\\
 &= \left\{\begin{array}{cc}
                     0 & I\neq J\\
\sum_{K\in \mathcal{I}_I}
 R_{\nu+|I|} P_{\nu+|I|}   \left( F_{K \setminus I} (r, y',v)    (v)^{|K|-|I|}\right) P_{\nu+|I|} R^*_{\nu+|I|}  & I=J
                    \end{array}  \right..
 \end{align*}
In particular by theorem 10.4 in \cite{QV2} the following operator is a multiplication operator
\begin{align*}
 & \sum_{K\in \mathcal{I}_I} R_{\nu+|I|} P_{\nu+|I|}   \left(F_{K \setminus I} (r, y',v)    (v)^{|K|-|I|}\right) P_{\nu+|I|} R^*_{\nu+|I|} \left(   c_{n,I}(u', \xi)   \right)\\
=&
 \sum_{K\in \mathcal{I}_I}
\pi^{-\frac{p-k-1}{2}}\,\frac{(2\xi)^{|n|+\nu+|M|-p +k}}{n!\, \Gamma(\nu+|M|-p)}
        \int_{\R^k_+\times \R^{n-k-1} \times \R_+}\\
 &\times  F_{K \setminus M}(\sqrt{r},\frac{1}{2\sqrt{\xi}}(-u'+v'),v+r_1+...+r_k)\\
&\times  r^n\, e^{-2\xi(v+r_1+...+r_k)-|v'|^2}\,v^{\nu+|K|-p-1}drdv'dv
  \cdot [c_{n,I}(u', \xi)].
\end{align*}

\end{proof}

\subsection{Quasi-hyperbolic case}

We will call  a super function $F$ quasi-nilpotent if it is invariant under the action of the  Quasi-nilpotent group, in other words, when $F$
has the following form
$$F(z,\xi)=\sum_{I\subset Q} f_I\left(s_1, ...,s_{n-1}, \, \arg(z_n - i|z'|^2)\right) |z_n - i|z'|^2|^{-|I|}  \xi_I \xi_I^*,$$
where
$$s_k= \frac{|z_k|}{\sqrt{|z'|^2 + |z_n - i|z'|^2|}}, $$
$k=1,\ldots,n-1$ and $z'=(z_{1},...,z_{p-1})$.

\begin{thm}
\label{thm:q-hyperbolic}
 Let $F$ be a bounded measurable quasi-nilpotent super function. Then
the Toeplitz operator $T_F^{\nu}$ acting on $H^2_{\nu}(\mathbb{U}^{p|q })$ is unitarily equivalent to the multiplication
operator $\gamma_{F,\nu} I = R_{\nu,(p|q)} T_F^{\nu} R_{\nu,(p|q)}^*$ acting on $(l_2(\Z^{p-1}_+,L_2(\R)))^{2^q}$, where $R_{\nu,(p|q)}$ and $R_{\nu,(p|q)}^*$
 are given in Definition \ref{SBT-QH}. The function
$$\gamma_{F,\nu}= \left( \{ \gamma_{F,\nu}(n,\xi,M)\}_{n \in \Z_+^{p-1}}   \right)_{M\subset Q}$$ is given by
\begin{align*}
 \gamma_{F,\nu}(n,\xi,M) =&
\sum_{K\in \mathcal{I}_M} \alpha^2_{n,\nu+|M|}(\xi) \int_{\tau(\B^{n-1}) \times (0,\pi)}   F_{K \setminus M}(s,\theta) \, |\beta_{n,\nu+|M|}(s,\xi,\theta)|^2 \\
&\times
(1-|s|^2)^{\nu+|M|-p} \frac{c_{\nu+|M|}}{4}\,\sin^{\nu+|K|-p-1} \theta\, sdsd\theta,
\end{align*}
where $\mathcal{I}_M=\{K\subset Q: M\subset K\}$, and the functions $\alpha_p(\xi)$ and $\beta_p(s,\xi,\theta)$ are given by (\ref{eq:alpha_p}) and (\ref{eq:beta_p}), respectively.
\end{thm}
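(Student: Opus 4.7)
The plan is to follow the same template used in the proofs of Theorems~\ref{thm:q-elliptic}, \ref{thm:q-parabolic}, \ref{thm:nilpotent}, \ref{thm:q-nilpotent}. First I would invoke the general unitary equivalence
\[
    \bigl(R_{\nu,(p|q)} T_F^{\nu} R_{\nu,(p|q)}^*\bigr)_{I,J} = R_{\nu+|I|}\, (T_F^{\nu})_{I,J}\, R_{\nu+|J|}^*,
\]
where now $R_{\nu+m}$ denotes the classical quasi-hyperbolic Bargmann-type transform from \cite{QV2} acting on $H^2_{\nu+m}(\mathbb{U}^p)$ and mapping onto $l_2(\Z^{p-1}_+, L_2(\R))$.

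Next I would apply Corollary~\ref{Toeplitz-Super-Cor} to the given quasi-hyperbolic symbol. Since $F$ is diagonal in the Grassmann variables, only the $I=J$ blocks of $(T_F^\nu)_{I,J}$ survive and take the form
\[
(T_F^{\nu})_{I,I} = \sum_{K \in \mathcal{I}_I} T_{\nu+|I|}^{\nu+|I|}\!\left( F_{K\setminus I}(s,\theta)\, |z_p - i|z'|^2|^{-|K\setminus I|}\,\Bigl(\tfrac{w_p-\bar{w}_p}{2i}-w'\bar{w}'\Bigr)^{|K|-|I|}\right),
\]
with $\theta = \arg(z_p - i|z'|^2)$ and $s_k = |z_k|/\sqrt{|z'|^2 + |z_p - i|z'|^2|}$.

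The key simplification is now to pass to the quasi-hyperbolic coordinates introduced earlier in the subsection, under which $z_p - i|z'|^2 = r(1-|s|^2)e^{i\theta}$ and hence $\Ima(z_p) - |z'|^2 = r(1-|s|^2)\sin\theta$. This yields the identity
\[
|z_p - i|z'|^2|^{-|K\setminus I|}\,(\Ima(z_p)-|z'|^2)^{|K|-|I|} = \sin^{|K|-|I|}\theta,
\]
so the scalar symbol inside each classical Toeplitz operator $T_{\nu+|I|}^{\nu+|I|}(\,\cdot\,)$ is a bounded quasi-hyperbolic-invariant function of $(s,\theta)$ alone.

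Finally, I would apply the classical spectral description of such Toeplitz operators from \cite{QV2} (Theorem~10.5, the quasi-hyperbolic analogue of Theorems 10.1--10.4 already used in the preceding cases), which asserts that each such operator is conjugate via $R_{\nu+|I|}$ to multiplication by an integral of $F_{K\setminus I}$ against $|\beta_{n,\nu+|I|}(s,\xi,\theta)|^2$ with respect to the measure $(1-|s|^2)^{\nu+|I|-p}\tfrac{c_{\nu+|I|}}{4}\sin^{\nu+|I|-p-1}\theta\, sdsd\theta$. The extra factor $\sin^{|K|-|I|}\theta$ coming from the coordinate computation above combines with $\sin^{\nu+|I|-p-1}\theta$ in the measure to give precisely $\sin^{\nu+|K|-p-1}\theta$, reproducing the stated expression for $\gamma_{F,\nu}(n,\xi,M)$. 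The only real obstacle is bookkeeping the exponents through the coordinate change and making sure the index shift $\nu \mapsto \nu+|I|$ lines up correctly with the shifted Bargmann transform in each diagonal block; every other ingredient is already in place from the preceding subsections and from \cite{QV2}.
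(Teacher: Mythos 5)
Your proposal follows exactly the route of the paper's proof: the blockwise unitary equivalence $(R_{\nu,(p|q)} T_F^{\nu} R_{\nu,(p|q)}^*)_{I,J}= R_{\nu+|I|} (T_F^{\nu})_{I,J} R^*_{\nu+|J|}$, the reduction to diagonal blocks via Corollary~\ref{Toeplitz-Super-Cor}, the identity $|z_p-i|z'|^2|^{-|K\setminus I|}(\Ima z_p-|z'|^2)^{|K|-|I|}=\sin^{|K|-|I|}\theta$ in the quasi-hyperbolic coordinates, and the application of Theorem~10.5 of \cite{QV2} with the sine powers combining into $\sin^{\nu+|K|-p-1}\theta$. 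It is correct, and in fact spells out the coordinate computation slightly more explicitly than the paper does.
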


\begin{proof}
For every super function $F$ the Toeplitz operator is unitarily equivalent to
$$R_{\nu,(p|q)} T_F^{\nu} R_{\nu,(p|q)}^*: (l_2(\Z^{p-1}_+,L_2(\R)))^{2^q}\rightarrow (l_2(\Z^{p-1}_+,L_2(\R)))^{2^q},$$
where the components of the operator are
\begin{equation}\label{Toeplitz-equivalent-h}
 (R_{\nu,(p|q)} T_F^{\nu} R_{\nu,(p|q)}^*)_{I,J}=  R_{\nu+|I|} (T_F^{\nu})_{I,J} R^*_{\nu+|J|}.
\end{equation}

Now, we consider a super function of the form
$F=\sum_{I\subset Q} F_I (s,\theta)r^{-|I|}  \xi_I \xi_I^*$ and using
 corollary \ref{Toeplitz-Super-Cor} we obtain
\begin{equation}
 (T_F^{\nu})_{I,J}=\left\{ \begin{array}{cc}
                     0 & I\neq J\\
\sum_{K\in \mathcal{I}_I}
 T_{\nu+|I|}^{\nu+|I|}\left( F_{K \setminus I} (s,\theta)r^{-|K \setminus I|}  v^{|K|-|I|}  \right)& I=J
                    \end{array}  \right..
 \end{equation}

From the above equation and \ref{Toeplitz-equivalent-h} we have

\begin{align*}
& (R_{\nu,(p|q)} T_F^{\nu} R_{\nu,(p|q)}^*)_{I,J}\\
&= \left\{\begin{array}{cc}
                     0 & I\neq J\\
\sum_{K\in \mathcal{I}_I}
 R_{\nu+|I|} P_{\nu+|I|}   \left( F_{K \setminus I} (s,\theta)  \sin^{|K|-|I|}\theta\right) P_{\nu+|I|} R^*_{\nu+|I|}  & I=J
                    \end{array}  \right..
 \end{align*}
By theorem 10.5 in \cite{QV2} the above operators are multiplication operators
\begin{align*}
 & \sum_{K\in \mathcal{I}_I} R_{\nu+|I|} P_{\nu+|I|}  F_{K \setminus I} (s,\theta)  \sin^{|K|-|I|} \theta     P_{\nu+|I|} R^*_{\nu+|I|} \left(   c_{n,I}( \xi)   \right)\\
=& \sum_{K\in \mathcal{I}_I}
  \alpha^2_{n,\nu+|I|}(\xi) \int_{\tau(\B^{n-1}) \times (0,\pi)}   F_{K \setminus I}(s,\theta) \sin^{|K|-|I|}\theta  \, |\beta_{n,\nu+|I|}(s,\xi,\theta)|^2
 \\ & \times(1-|s|^2)^{\nu+|I|-p}\frac{c_{\nu+|I|}}{4}\,\sin^{\nu+|I|-p-1} \theta\, sdsd\theta
  \cdot [c_{n,I}(u', \xi)]   \\
  =& \sum_{K\in \mathcal{I}_I}
  \alpha^2_{n,\nu+|I|}(\xi) \int_{\tau(\B^{n-1}) \times (0,\pi)}   F_{K \setminus I}(s,\theta)  \, |\beta_{n,\nu+|I|}(s,\xi,\theta)|^2 \,
 \\ & \times(1-|s|^2)^{\nu+|I|-p}\frac{c_{\nu+|I|}}{4}\,\sin^{\nu+|K|-p-1} \theta\, sdsd\theta
  \cdot [c_{n,I}(u', \xi)].
\end{align*}

\end{proof}




\begin{thebibliography}{10}
\bibitem{B} F.~A.~Berezin, \emph{Introduction to Superanalysis}, Reidel, Dordrecht, 1987.

\bibitem{BKLR1} D.~Borthwick, S.~Klimek, A.~Lesniewski and  M. Rinaldi, \emph{Super Toeplitz operators and non-perturbative deformation quantization of
     supermanifolds}, Commun. Math. Phys. {\bf 153} (1993), 49-76.

\bibitem{BKLR2} D.~Borthwick,  S.~Klimek, A.~Lesniewski and  Rinaldi M. ,
\emph{Matrix Cartan Superdomains, super upper Operators, and Quantization}, Journal Functional Analysis 127 (2), 456-510, 1995.


\bibitem{GQV} S.~Grudsky, R.~Quiroga and N.~Vasilevski, \emph{Commutative $C^*-$algebras of Toeplitz operators and quantization on the unit disk}, Journal of Functional Analysis 234 (2006), 1-44.


\bibitem{LU} M.~Loaiza and H.~Upmeier, \emph{Toeplitz C*-algebras on super Cartan domains}, Revista Matem\'atica Complutense, vol. 21 (2008) no. 2, 489-518.

\bibitem{L-SN1} M.~Loaiza and A.~S\'anchez-Nungaray, \emph{On $C^*$-Algebras of super Toeplitz operators with radial symbols},
Operator Theory: Advances and Applications, 2010, Vol. 210, 175-188.

\bibitem{L-SN2} M.~Loaiza and A.~S\'anchez-Nungaray, \emph{Commutative  algebras of Toeplitz operators on the super upper half-plane: Quasi-hyperbolic and Quasi-parabolic cases},
Operator Theory: Advances and Applications, 2014, Vol. 236, 317-336.

\bibitem{QV1}  R.~Quiroga-Barranco and N.~Vasilevski, \emph{Commutative algebras of Toeplitz operators on Reinhardt domains},
Integral Equations and Operator Theory 59, 67-98, 2007.

\bibitem{QV2} R.~Quiroga-Barranco and  N.~Vasilevski, \emph{Commutative C*-algebra of Toeplitz operators on the unit ball, I. Bargmann type transform and spectral representations of Toeplitz operators}, Integral Equations and Operator Theory, 59(3):379-419, 2007.

\bibitem{QV3} R.~Quiroga-Barranco and  N.~Vasilevski, \emph{Commutative C*-algebra of Toeplitz operators on the unit ball, II. Geometriy of the level sets of symbols}, Integral Equations and Operator Theory, 59(1):89-132, 2008.

\bibitem{SN1} A.~S\'anchez-Nungaray, \emph{Commutative algebras of Toeplitz operators on the super upper half-plane:
Super parabolic case,} Operator Theory: Advances and Applications. 2012. Vol. 220, 263-280.

\bibitem{Z} K.~Zhu, \emph{Spaces of Holomorphic Functions in the Unit Ball}, Springer-Verlag, New York, 2005.

 \end{thebibliography}
\end{document}